\documentclass[11pt]{amsart}
\usepackage{amsfonts}
\usepackage{mathrsfs}
\usepackage{amsmath}
\usepackage{amsmath, amsthm, amssymb}
\usepackage{color}

\renewcommand{\(}{\left( }
\renewcommand{\)}{\right) }

\renewcommand{\theequation}{\theequation. \arabic{equation}}
\numberwithin{equation}{section}
\newtheorem{thm}{Theorem}[section]

\newtheorem{rem}[thm]{Remark}
\newtheorem{prop}[thm]{Proposition}

\def\squarebox#1{\hbox to #1{\hfill\vbox to #1{\vfill}}}

\begin{document}
\title[ON THE $q$-DERIVATIVE AND $q$-SERIES EXPANSIONS]
{ON THE $q$-DERIVATIVE AND $q$-SERIES EXPANSIONS}
\author{Zhi-Guo Liu}
\date{\today}
\address{ Department of Mathematics, East China Normal University, 500 Dongchuan Road,
Shanghai 200241, P. R. China} \email{zgliu@math.ecnu.edu.cn;
liuzg@hotmail.com}
\thanks{The  author was supported in part by
the National Science Foundation of China}
\thanks{ 2010 Mathematics Subject Classifications :  05A30,
33D15, 11E25.}
\thanks{ Keywords: $q$-Series; $q$-derivative; Hecke-type series; sums of squares.}
\begin{abstract}
Using a general $q$-series expansion, we derive some nontrivial $q$-formulas involving many infinite products. A multitude of Hecke--type series identities are derived. Some general formulas for sums of any number of squares are given. A new representation for the generating function for sums of three triangular numbers is derived, which is slightly different from that of Andrews, also implies the famous result of Gauss where every integer is the sum of three triangular numbers.
\end{abstract}
\maketitle
\section{Introduction}

Throughout the paper, we use the standard $q$-notations.
For $0<q<1$, we define the $q$-shifted factorials as
\begin{equation*}
(a; q)_0=1,\quad (a; q)_n=\prod_{k=0}^{n-1}(1-aq^k), \quad (a;
q)_\infty=\prod_{k=0}^\infty (1-aq^k);
\end{equation*}
and for convenience, we also adopt the following compact notation for the multiple
$q$-shifted factorial:
\begin{equation*}
(a_1, a_2,...,a_m;q)_n=(a_1;q)_n(a_2;q)_n ... (a_m;q)_n,
\end{equation*}
where $n$ is an integer or $\infty$.

The basic hypergeometric series
${_r\phi_s}$ is defined as
\begin{equation*}
{_r\phi_s} \left({{a_1, a_2, ..., a_{r}} \atop {b_1, b_2, ...,
b_s}} ;  q, z  \right) =\sum_{n=0}^\infty \frac{(a_1, a_2, ...,
a_{r};q)_n} {(q,  b_1, b_2, ..., b_s ;q)_n}\left((-1)^n q^{n(n-1)/2}\right)^{1+s-r} z^n.
\end{equation*}

For any function $f(x)$, the  $q$-derivative of $f(x)$
with respect to $x,$ is defined as
\begin{equation*}
\mathcal{D}_{q,x}\{f(x)\}=\frac{f(x)-f(qx)}{x},
\end{equation*}
and we further define  $\mathcal{D}_{q,x}^{0} \{f\}=f,$ and
for $n\ge 1$, $\mathcal{D}_{q, x}^n \{f\}=\mathcal{D}_{q, x}\{\mathcal{D}_{q, x}^{n-1}\{f\}\}.$

Using some basic properties of  the $q$-derivative, we \cite{Liu} prove the following $q$-expansion formula.
\begin{thm} {\rm (Liu)}\label{liuthm1} If $f(x)$ is an analytic function near $x=0$, then,  we have
\begin{equation*}
f(a)=\sum_{n=0}^\infty \frac{(1-\alpha q^{2n})(\alpha q/a; q)_n a^n}{(q, a;
q)_n}\left[ \mathcal{D}^n_{q, x}\{f(x)(x; q)_{n-1}\} \right]_{x=\alpha q}.
\end{equation*}
\end{thm}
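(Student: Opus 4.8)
The plan is to prove the formula by testing it against a one-parameter family of functions that diagonalises the $q$-derivative, and then to recover the general analytic $f$ by analyticity and linearity. First I would note that both sides are linear in $f$ and depend only on the Taylor coefficients of $f$ at the origin, so it suffices to verify the identity on a family rich enough to generate all those coefficients. A convenient choice is the $q$-exponential $f(x)=1/(cx;q)_\infty$, because it is an eigenfunction of the $q$-derivative: a one-line computation gives $\mathcal{D}_{q,x}\{1/(cx;q)_\infty\}=c/(cx;q)_\infty$. Having verified the identity for this family for $c$ near $0$, I would differentiate repeatedly in $c$ and set $c=0$; since $[\partial_c^m 1/(cx;q)_\infty]_{c=0}=m!\,x^m/(q;q)_m$ by Euler's identity, this recovers the identity for every monomial $f(x)=x^m$, and hence, after reassembling the Taylor series, for every $f$ analytic near $0$. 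In this way the single hard summation below is done once and then transferred to all monomials for free.

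The core of the argument is therefore the evaluation of the right-hand side when $f(x)=1/(cx;q)_\infty$. For this I would expand the $n$-th $q$-derivative of the product $f(x)(x;q)_{n-1}$ by the $q$-Leibniz rule. Because $(x;q)_{n-1}$ is a polynomial of degree $n-1$, its $q$-derivatives of order $\ge n$ vanish and the Leibniz sum terminates; and because $f$ is an eigenfunction, every factor $\mathcal{D}^j_{q,x}f$ is simply $c^j f$ up to an argument shift $x\mapsto q^k x$. This collapses $[\mathcal{D}^n_{q,x}\{f(x)(x;q)_{n-1}\}]_{x=\alpha q}$ to a short, explicit expression involving $(c\alpha q;q)_\infty$, powers of $c$, and $q$-shifted factorials of $\alpha$. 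The factors $(\alpha q/x;q)_n$ and $(x;q)_{n-1}$ evaluated near $x=\alpha q$ should produce exactly the cancellations that put the summand into very-well-poised form, the factor $(1-\alpha q^{2n})$ being the tell-tale signature of that structure.

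With this closed form in hand, the remaining task is to sum the resulting series in $n$. I expect it to be a very-well-poised basic hypergeometric series (of ${}_6\phi_5$ type, or a limiting case thereof once $f$ contributes only the single free parameter $c$), which is summable in closed product form by the classical very-well-poised summation. Carrying out that summation should return exactly $1/(ca;q)_\infty=f(a)$, completing the verification on the test family and hence, by the reduction above, the proof.

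The main obstacle, I anticipate, is twofold. First, extracting the clean closed form for $[\mathcal{D}^n_{q,x}\{f(x)(x;q)_{n-1}\}]_{x=\alpha q}$ requires organising the $q$-Leibniz expansion and the evaluation at $x=\alpha q$ so that the very-well-poised structure emerges transparently rather than as an opaque sum; bookkeeping of the $q$-binomial coefficients and the powers of $q$ is where errors are most likely. Second, one must pin down the precise very-well-poised summation that collapses the $n$-series and check that its balancing and convergence conditions are met by the parameters produced here. A subsidiary but genuine analytic point is the justification of the termwise differentiation in $c$ and of the rearrangement of the double series, which needs uniform convergence for $|c|$, $|a|$, and the Taylor radius of $f$ all sufficiently small.
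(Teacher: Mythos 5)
There is a genuine gap here, and it sits exactly where the theorem's content lives. (Note first that the present paper does not prove Theorem \ref{liuthm1} at all --- it is quoted from the earlier paper \cite{Liu}, so there is no in-text proof to match; your argument must stand on its own.) Your reduction framework is sound and your preliminary facts are correct: both sides are linear functionals of the Taylor coefficients of $f$, the function $1/(cx;q)_\infty$ does satisfy $\mathcal{D}_{q,x}\{1/(cx;q)_\infty\}=c/(cx;q)_\infty$, and Euler's identity does recover the monomials from the $c$-expansion. But everything after that is deferred to ``I expect'' and ``should.'' Two specific claims are not justified and one of them is over-optimistic as stated. First, $\left[\mathcal{D}^n_{q,x}\{(x;q)_{n-1}/(cx;q)_\infty\}\right]_{x=\alpha q}$ does \emph{not} collapse to a short closed form: the $q$-Leibniz expansion leaves a terminating inner sum of $n$ terms (after standardizing, a terminating series with an extra $(-1)^iq^{\binom{i}{2}}$-type factor that is not a textbook ${}_3\phi_2$), so the right-hand side becomes a genuine double series, and the ``single hard summation'' you promise to do once is in fact the evaluation of that double series. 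Second, you never exhibit the parameters of the alleged very-well-poised ${}_6\phi_5$ nor check that Rogers' summation (or any limiting case of it) actually applies; this cannot be taken on faith, since even the simplest nontrivial instance --- $f(x)=x$ with $\alpha=0$, where the claim reduces to $\sum_{m\ge0}(-1)^mq^{m(m-1)/2}a^m/(aq;q)_m=1-a$ --- already requires a nonterminating Rogers--Fine-type summation rather than anything that ``returns exactly $1/(ca;q)_\infty$'' by inspection.

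In short, the architecture (test family, eigenfunction trick, extraction of monomials, density argument) is a legitimate and potentially workable route, different in flavor from the usual proofs of such expansion theorems, which proceed either by verifying the identity directly on monomials $x^k$ via an explicit formula for $\left[\mathcal{D}^n_{q,x}\{x^k(x;q)_{n-1}\}\right]_{x=\alpha q}$ and a known summation, or by establishing a biorthogonality relation between the functionals $f\mapsto\left[\mathcal{D}^n_{q,x}\{f(x)(x;q)_{n-1}\}\right]_{x=\alpha q}$ and the basis $(1-\alpha q^{2m})(\alpha q/x;q)_m x^m/(x;q)_m$. But as written your proposal is a plan rather than a proof: the closed form of the derivative term, the identification of the resulting series, and the verification that a summation theorem with the right balancing and convergence conditions applies are all missing, and these are precisely the nontrivial steps.
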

Using Theorem \ref{liuthm1},  we \cite{Liu2013} established
the following theorem.
\begin{thm} {\rm (Liu)}\label{liuthm2}  If $f(x)$ is an analytic function near $x=0,$
then,  under suitable convergence conditions,  we have
\begin{align*}
&\frac{(\alpha q, \alpha ab/q; q)_\infty}
{(\alpha a, \alpha b; q)_\infty} f(\alpha a)\\
=\sum_{n=0}^\infty & \frac{(1-\alpha q^{2n}) (\alpha, q/a; q)_n (a/q)^n}
{(1-\alpha
)(q, \alpha a; q)_n}
\sum_{k=0}^n \frac{(q^{-n}, \alpha q^n; q)_k q^k}
{(q, \alpha b; q)_k}f(\alpha q^{k+1}).\nonumber
\end{align*}
\end{thm}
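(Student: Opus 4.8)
The plan is to obtain Theorem \ref{liuthm2} as a specialization of Theorem \ref{liuthm1} applied to a carefully chosen analytic function, after which the only real work is to evaluate the $q$-derivative in closed form. Concretely, I would apply Theorem \ref{liuthm1} with its free point $a$ replaced by $\alpha a$ and with $f$ replaced by
\[
F(x)=\frac{(\alpha q, xb/q; q)_\infty}{(x, \alpha b; q)_\infty}\,f(x),
\]
which is analytic near $x=0$ since $(0;q)_\infty=1$. Then the left-hand side $F(\alpha a)$ is exactly $\tfrac{(\alpha q,\alpha ab/q;q)_\infty}{(\alpha a,\alpha b;q)_\infty}f(\alpha a)$, the desired left-hand side of Theorem \ref{liuthm2}, and the prefactor $\alpha q/(\alpha a)=q/a$ already produces the $(q/a;q)_n$ appearing in the outer sum.

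The central step is to simplify the bracket $\left[\mathcal{D}^n_{q,x}\{F(x)(x;q)_{n-1}\}\right]_{x=\alpha q}$. First I would use $(x;q)_{n-1}/(x;q)_\infty=1/(xq^{n-1};q)_\infty$ to write $F(x)(x;q)_{n-1}=\tfrac{(\alpha q;q)_\infty}{(\alpha b;q)_\infty}\cdot\tfrac{(xb/q;q)_\infty}{(xq^{n-1};q)_\infty}f(x)$; the role of the factor $(x;q)_{n-1}$ supplied by Theorem \ref{liuthm1} is precisely to convert the troublesome denominator $(x;q)_\infty$ into the tail $(xq^{n-1};q)_\infty$, whose values at the sampling points will telescope. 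To evaluate the $q$-derivative I would first record, proving it by a short induction on $n$, the sampling formula
\[
\mathcal{D}^n_{q,x}\{h(x)\}=x^{-n}\sum_{k=0}^n\frac{(q^{-n};q)_k}{(q;q)_k}\,q^k\,h(q^kx),
\]
which expresses the $n$-th $q$-derivative as a finite combination of the shifted values $h(q^kx)$.

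Applying this with $x=\alpha q$ evaluates $h$ at the points $\alpha q^{k+1}$, which is exactly where $f$ is sampled in the target identity. Substituting $x=\alpha q^{k+1}$ into the product factors and using the telescoping identities $(\alpha bq^k;q)_\infty=(\alpha b;q)_\infty/(\alpha b;q)_k$ and $(\alpha q^{n+k};q)_\infty=(\alpha q^n;q)_\infty/(\alpha q^n;q)_k$, all infinite products either cancel or collapse to $(\alpha q;q)_\infty/(\alpha q^n;q)_\infty=(\alpha q;q)_{n-1}$, leaving the finite factor $(\alpha q^n;q)_k/(\alpha b;q)_k$. Combined with the elementary identity $\tfrac{(q^{-n};q)_k}{(q;q)_k}q^k$ already matching the target coefficients, the bracket becomes
\[
\left[\mathcal{D}^n_{q,x}\{F(x)(x;q)_{n-1}\}\right]_{x=\alpha q}=\frac{(\alpha q;q)_{n-1}}{(\alpha q)^n}\sum_{k=0}^n\frac{(q^{-n},\alpha q^n;q)_k\, q^k}{(q,\alpha b;q)_k}f(\alpha q^{k+1}).
\]

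Finally I would feed this back into Theorem \ref{liuthm1} and tidy up the outer coefficient: the $q$-powers collapse via $(\alpha a)^n/(\alpha q)^n=(a/q)^n$, and $(\alpha q;q)_{n-1}=(\alpha;q)_n/(1-\alpha)$ supplies both the $(\alpha;q)_n$ in the numerator and the $1-\alpha$ in the denominator, reproducing Theorem \ref{liuthm2} verbatim. I expect the main obstacle to be the bookkeeping in the central step: choosing $F$ so that the left side yields the correct infinite-product prefactor while the $(x;q)_{n-1}$ factor engineers the telescoping, and then correctly converting $\mathcal{D}^n_{q,x}$ into the $(q^{-n};q)_k$ sum. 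The phrase ``suitable convergence conditions'' flags a secondary analytic point: one must justify convergence of the resulting double series and the legitimacy of the term-by-term manipulation, which I would handle by imposing, for instance, $|\alpha a|<1$ together with suitable decay on $f$.
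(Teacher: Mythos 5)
Your proposal is correct, and it follows exactly the route the paper indicates (the theorem is quoted from \cite{Liu2013} with the remark that it is established ``using Theorem \ref{liuthm1}''): apply Theorem \ref{liuthm1} with $a\mapsto\alpha a$ to $F(x)=\frac{(\alpha q, xb/q;q)_\infty}{(x,\alpha b;q)_\infty}f(x)$, evaluate the $q$-derivative via the standard sampling formula, and telescope the infinite products. All the computations you outline check out, including the identities $(\alpha a)^n(\alpha q)^{-n}=(a/q)^n$ and $(\alpha q;q)_{n-1}=(\alpha;q)_n/(1-\alpha)$ that produce the final coefficient.
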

Several interesting applications of this formula are discussed in \cite{Liu2013}.
In particular, this formula leads to a new proof of the orthogonality relation
for the Askey-Wilson polynomials.

In this paper we continue to discuss the applications of Theorems \ref{liuthm2}.
Many nontrivial $q$-formulae are derived.  In particular,
we prove the following remarkable $q$-formula involving many infinite products.
\begin{thm} \label{liunewthma} If
$
\max\{|\alpha a|, |\alpha b|, |\alpha b_1|, |\alpha ac_1/q|, \cdots |\alpha b_m|, |\alpha a c_m/q|\}<1,
$ and  $m, l$ are two nonnegative integers,  then,  we have the $q$-formula
 \begin{align*}
& (a/q)^l \frac{(\alpha q, \alpha ab/q; q)_\infty}
{(\alpha a, \alpha b; q)_\infty} \prod_{j=1}^m \frac{(\alpha a b_j/q, \alpha c_j; q)_\infty}
{(\alpha a c_j/q,  \alpha b_j; q)_\infty}\\
&=\sum_{n=0}^\infty  \frac{(1-\alpha q^{2n}) (\alpha, q/a; q)_n (a/q)^n}
{(1-\alpha)(q, \alpha a; q)_n}
{_{m+2}\phi_{m+1}}\left( {{q^{-n}, \alpha q^n, \alpha c_1, \cdots,  \alpha c_m}
\atop{\alpha b, \alpha b_1, \cdots, \alpha b_m}}; q, q^{l+1}\right).
 \end{align*}
\end{thm}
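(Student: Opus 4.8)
The plan is to specialize Theorem~\ref{liuthm2} to a well-chosen analytic function. Set
\[
f(x)=x^l\prod_{j=1}^m\frac{(xb_j/q;q)_\infty}{(xc_j/q;q)_\infty}.
\]
Each factor $(xb_j/q;q)_\infty$ is entire in $x$, and each $1/(xc_j/q;q)_\infty$ has poles only at $x=q^{1-k}/c_j$ with $k\ge0$, all away from the origin, so $f$ is analytic near $x=0$ and Theorem~\ref{liuthm2} is available. The proof then reduces to evaluating both sides of Theorem~\ref{liuthm2} for this $f$ and tidying the infinite products; the one genuinely creative step is guessing this $f$.

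On the left-hand side, putting $x=\alpha a$ gives $f(\alpha a)=(\alpha a)^l\prod_{j=1}^m(\alpha ab_j/q;q)_\infty/(\alpha ac_j/q;q)_\infty$, so the left-hand side of Theorem~\ref{liuthm2} already carries every infinite product occurring in Theorem~\ref{liunewthma}; only the factor $\prod_{j=1}^m(\alpha c_j;q)_\infty/(\alpha b_j;q)_\infty$ and a power of $\alpha q$ are still missing, and I expect these to be absorbed into a single $a$-independent constant to be divided out at the end.

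For the right-hand side I evaluate $f$ at the nodes $x=\alpha q^{k+1}$. Writing $(\alpha q^k t;q)_\infty=(\alpha t;q)_\infty/(\alpha t;q)_k$ for $t=b_j,c_j$ yields
\[
f(\alpha q^{k+1})=(\alpha q)^l\prod_{j=1}^m\frac{(\alpha b_j;q)_\infty}{(\alpha c_j;q)_\infty}\cdot q^{kl}\prod_{j=1}^m\frac{(\alpha c_j;q)_k}{(\alpha b_j;q)_k}.
\]
The $k$-independent products factor out of the inner sum of Theorem~\ref{liuthm2}, and since $q^k\cdot q^{kl}=(q^{l+1})^k$, the inner sum collapses to
\[
\sum_{k=0}^n\frac{(q^{-n},\alpha q^n,\alpha c_1,\dots,\alpha c_m;q)_k}{(q,\alpha b,\alpha b_1,\dots,\alpha b_m;q)_k}\,(q^{l+1})^k.
\]
Here $r=m+2$ and $s=m+1$ give $1+s-r=0$, so the factor $((-1)^kq^{k(k-1)/2})^{1+s-r}$ in the definition of ${}_{m+2}\phi_{m+1}$ equals $1$, and this sum is exactly the terminating basic hypergeometric series displayed in Theorem~\ref{liunewthma}.

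It remains to divide both sides of the specialized identity by the common $a$-independent constant $(\alpha q)^l\prod_{j=1}^m(\alpha b_j;q)_\infty/(\alpha c_j;q)_\infty$. On the left this turns $(\alpha a)^l/(\alpha q)^l$ into $(a/q)^l$ and supplies the missing product $\prod_{j=1}^m(\alpha c_j;q)_\infty/(\alpha b_j;q)_\infty$, giving precisely the left-hand side of Theorem~\ref{liunewthma}; on the right it leaves exactly the asserted series. The main (and only modest) obstacle is to confirm that the stated bound $\max\{|\alpha a|,|\alpha b|,|\alpha b_1|,\dots,|\alpha b_m|,|\alpha ac_1/q|,\dots,|\alpha ac_m/q|\}<1$ ensures convergence of the products defining $f(\alpha a)$ and of the outer $n$-series, so that the ``suitable convergence conditions'' required by Theorem~\ref{liuthm2} actually hold.
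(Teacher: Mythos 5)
Your proposal is correct and follows exactly the paper's own argument: the same choice $f(x)=x^l\prod_{j=1}^m (xb_j/q;q)_\infty/(xc_j/q;q)_\infty$ substituted into Theorem~\ref{liuthm2}, with the remaining work being the routine simplification you carry out. You simply make explicit the bookkeeping (factoring out the $k$-independent constant $(\alpha q)^l\prod_j(\alpha b_j;q)_\infty/(\alpha c_j;q)_\infty$) that the paper compresses into the word ``simplifying.''
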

\begin{proof}
It is easily seen that the following function is analytic near $x=0$:
\[
x^l\prod_{j=1}^m \frac{(b_jx/q; q)_\infty}{(c_jx/q; q)_\infty}.
\]
Thus, we can replace  $f(x)$ by this function  in Theorem \ref{liuthm2}. By a direct
computation, we immediately find that
\[
f(\alpha a)=(\alpha a)^l\prod_{j=1}^m \frac{(\alpha a b_j/q; q)_\infty}{(\alpha a c_j/q; q)_\infty},
\]
\[
f(\alpha q^{k+1})=(\alpha q^{k+1})^l\prod_{j=1}^m \frac{(\alpha b_j q^k; q)_\infty}{(\alpha c_j q^{k}; q)_\infty}.
\]
Substituting these two equations into Theorem \ref{liuthm2} and simplifying, we complete the
proof of Theorem \ref{liunewthma}.
\end{proof}
When $l=0,$ the above theorem reduces to the following $q$-identity.
\begin{thm} \label{liunewthmb} If
$
\max\{|\alpha a|, |\alpha b|, |\alpha b_1|, |\alpha ac_1/q|, \cdots |\alpha b_m|, |\alpha a c_m/q|\}<1,
$  then,  we have
 \begin{align*}
&  \frac{(\alpha q, \alpha ab/q; q)_\infty}
{(\alpha a, \alpha b; q)_\infty} \prod_{j=1}^m \frac{(\alpha a b_j/q, \alpha c_j; q)_\infty}
{(\alpha a c_j/q, \alpha b_j; q)_\infty}\\
&=\sum_{n=0}^\infty  \frac{(1-\alpha q^{2n}) (\alpha, q/a; q)_n (a/q)^n}
{(1-\alpha)(q, \alpha a; q)_n}
{_{m+2}\phi_{m+1}}\left( {{q^{-n}, \alpha q^n, \alpha c_1, \cdots,  \alpha c_m}
\atop{\alpha b, \alpha b_1, \cdots, \alpha b_m}}; q, q\right).
 \end{align*}
 \end{thm}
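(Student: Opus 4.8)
The plan is to obtain Theorem~\ref{liunewthmb} as the special case $l=0$ of the just-established Theorem~\ref{liunewthma}. That theorem holds for every nonnegative integer $l$ under the bound $\max\{|\alpha a|, |\alpha b|, |\alpha b_1|, |\alpha ac_1/q|, \cdots, |\alpha b_m|, |\alpha a c_m/q|\}<1$, which is exactly the hypothesis of Theorem~\ref{liunewthmb}. Hence I may simply substitute $l=0$ throughout and read off the resulting identity, checking that the two $l$-dependent ingredients collapse as expected.

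First I would track the dependence on $l$ on the left-hand side. The only place $l$ appears there is the prefactor $(a/q)^l$; putting $l=0$ makes it equal to $1$, so the left-hand side reduces to
\[
\frac{(\alpha q, \alpha ab/q; q)_\infty}{(\alpha a, \alpha b; q)_\infty}\prod_{j=1}^m \frac{(\alpha a b_j/q, \alpha c_j; q)_\infty}{(\alpha a c_j/q, \alpha b_j; q)_\infty},
\]
which is precisely the left-hand side of Theorem~\ref{liunewthmb}.

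Next I would turn to the right-hand side, where $l$ enters only through the argument $q^{l+1}$ of the terminating series ${_{m+2}\phi_{m+1}}$; the summation prefactor and the numerator/denominator parameters are untouched by the specialization. Setting $l=0$ replaces $q^{l+1}$ by $q$, leaving
\[
\sum_{n=0}^\infty \frac{(1-\alpha q^{2n})(\alpha, q/a; q)_n (a/q)^n}{(1-\alpha)(q, \alpha a; q)_n} {_{m+2}\phi_{m+1}}\left( {{q^{-n}, \alpha q^n, \alpha c_1, \cdots, \alpha c_m} \atop {\alpha b, \alpha b_1, \cdots, \alpha b_m}}; q, q\right),
\]
which matches the right-hand side of Theorem~\ref{liunewthmb} term by term. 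Equating the two specialized sides completes the argument.

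Because this is a direct specialization, there is no genuine obstacle: the work is purely bookkeeping, confirming that $l$ occurs only in the two places noted and that the convergence hypothesis carries over verbatim. If a self-contained derivation were preferred, one could instead repeat the proof of Theorem~\ref{liunewthma} starting from Theorem~\ref{liuthm2} with the analytic test function $f(x)=\prod_{j=1}^m (b_j x/q; q)_\infty/(c_j x/q; q)_\infty$, the $l=0$ instance of the function used there; evaluating $f(\alpha a)$ and $f(\alpha q^{k+1})$ and recognizing the inner sum as the stated ${_{m+2}\phi_{m+1}}$ would reproduce the same identity.
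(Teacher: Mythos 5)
Your proposal is correct and matches the paper exactly: the paper obtains Theorem~\ref{liunewthmb} precisely by setting $l=0$ in Theorem~\ref{liunewthma}, and your bookkeeping of where $l$ appears (the prefactor $(a/q)^l$ and the argument $q^{l+1}$) is accurate.
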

 When $a=0,$ Theorem \ref{liunewthmb} immediately becomes the following interesting formula.
 \begin{thm} \label{liunewthmc} If
$
\max\{|\alpha b|, |\alpha b_1|, \cdots |\alpha b_m|\}<1,
$  then,  we have
 \begin{align*}
&  \frac{(\alpha q; q)_\infty}
{(\alpha b; q)_\infty} \prod_{j=1}^m \frac{(\alpha c_j; q)_\infty}
{(\alpha b_j; q)_\infty}\\
&=\sum_{n=0}^\infty  \frac{(1-\alpha q^{2n}) (\alpha; q)_n (-1)^n q^{n(n-1)/2}}
{(1-\alpha)(q; q)_n}
{_{m+2}\phi_{m+1}}\left( {{q^{-n}, \alpha q^n, \alpha c_1, \cdots,  \alpha c_m}
\atop{\alpha b, \alpha b_1, \cdots, \alpha b_m}}; q, q\right).
 \end{align*}
 \end{thm}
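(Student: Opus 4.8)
The plan is to obtain Theorem~\ref{liunewthmc} directly from Theorem~\ref{liunewthmb} by letting $a\to 0$. Because the entire dependence on $a$ in Theorem~\ref{liunewthmb} is through explicit infinite products and $q$-shifted factorials, the proof should reduce to a term-by-term passage to this limit, with only one genuinely delicate evaluation.

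First I would simplify the left-hand side, where the passage $a\to 0$ is in fact a harmless specialization. Setting $a=0$ sends each of $\alpha ab/q$, $\alpha a$, $\alpha ab_j/q$, and $\alpha ac_j/q$ to $0$, and since $(0;q)_\infty=1$ the four families of factors $(\alpha ab/q;q)_\infty$, $(\alpha a;q)_\infty$, $(\alpha ab_j/q;q)_\infty$, and $(\alpha ac_j/q;q)_\infty$ all collapse to $1$. Hence the left-hand side of Theorem~\ref{liunewthmb} becomes exactly $\frac{(\alpha q;q)_\infty}{(\alpha b;q)_\infty}\prod_{j=1}^m\frac{(\alpha c_j;q)_\infty}{(\alpha b_j;q)_\infty}$, which is the left-hand side claimed in Theorem~\ref{liunewthmc}.

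On the right-hand side the inner ${_{m+2}\phi_{m+1}}$ series is manifestly free of $a$, so it is carried over verbatim, and the factor $(\alpha a;q)_n$ in the denominator of the coefficient tends to $1$ while $(\alpha;q)_n$ is already independent of $a$. The one nontrivial point, and the step where I expect to have to work, is the behaviour of $(q/a;q)_n\,(a/q)^n$ as $a\to 0$: this is a $0\cdot\infty$ indeterminate form, since $(q/a;q)_n$ blows up while $(a/q)^n$ vanishes. The key maneuver will be to write $(q/a;q)_n=\prod_{k=0}^{n-1}\left(1-q^{k+1}/a\right)$ and factor the dominant term $-q^{k+1}/a$ out of each factor, so that $(q/a;q)_n(a/q)^n$ converges to $\prod_{k=0}^{n-1}\left(-q^{k+1}\right)q^{-n}=(-1)^n q^{n(n+1)/2-n}=(-1)^n q^{n(n-1)/2}$.

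Feeding these limits into the coefficient of Theorem~\ref{liunewthmb} turns $\frac{(1-\alpha q^{2n})(\alpha,q/a;q)_n(a/q)^n}{(1-\alpha)(q,\alpha a;q)_n}$ into $\frac{(1-\alpha q^{2n})(\alpha;q)_n(-1)^n q^{n(n-1)/2}}{(1-\alpha)(q;q)_n}$, which is precisely the coefficient appearing in Theorem~\ref{liunewthmc}. Once the $0\cdot\infty$ limit above is settled, the two sides match by inspection. Finally I would note that the hypothesis $\max\{|\alpha b|,|\alpha b_1|,\dots,|\alpha b_m|\}<1$ is exactly the $a=0$ case of the convergence condition of Theorem~\ref{liunewthmb}, so the limiting series remains convergent and the interchange of limit and summation is justified.
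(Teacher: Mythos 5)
Your proposal is correct and matches the paper's approach: the paper obtains Theorem~\ref{liunewthmc} precisely by specializing $a=0$ (i.e., letting $a\to 0$) in Theorem~\ref{liunewthmb}, and your evaluation of the limit $(q/a;q)_n\,(a/q)^n\to(-1)^n q^{n(n-1)/2}$ is the correct way to fill in the one nontrivial step that the paper leaves implicit.
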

 Setting $b=b_1=\cdots=b_m=0$ and $c_1=c_2=\cdots=c_m=0,$ respectively, in Theorem~\ref{liunewthmc},
 we obtain the following two theorems.
 \begin{thm}\label{liunewthmd}We have the $q$-summation formula
  \begin{align*}
 (\alpha q; q)_\infty \prod_{j=1}^m (\alpha c_j; q)_\infty&=\sum_{n=0}^\infty  \frac{(1-\alpha q^{2n}) (\alpha; q)_n (-1)^n q^{n(n-1)/2}}
{(1-\alpha)(q; q)_n}\\
&\quad \times {_{m+2}\phi_{m+1}}\left( {{q^{-n}, \alpha q^n, \alpha c_1, \cdots,  \alpha c_m}
\atop{0, 0, \cdots, 0}}; q, q\right).
 \end{align*}
 \end{thm}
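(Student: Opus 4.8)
The plan is to derive Theorem \ref{liunewthmd} directly from Theorem \ref{liunewthmc} by the specialization $b=b_1=\cdots=b_m=0$, with no new ingredients needed. First I would note that this choice satisfies the hypothesis of Theorem \ref{liunewthmc}, since $|\alpha b|=|\alpha b_1|=\cdots=|\alpha b_m|=0<1$, so the identity is valid on an open neighborhood of this point and the substitution is legitimate without any separate convergence discussion.

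Next I would track each side under the substitution. On the left-hand side, the only dependence on $b,b_1,\dots,b_m$ is through the denominator factors $(\alpha b;q)_\infty$ and $(\alpha b_j;q)_\infty$. Using the elementary identity $(0;q)_\infty=1$, each of these collapses to $1$, so the left-hand side reduces precisely to
\[
(\alpha q; q)_\infty \prod_{j=1}^m (\alpha c_j; q)_\infty.
\]
On the right-hand side, the prefactor
\[
\frac{(1-\alpha q^{2n})(\alpha;q)_n(-1)^n q^{n(n-1)/2}}{(1-\alpha)(q;q)_n}
\]
is unaffected, while the lower parameters $\alpha b,\alpha b_1,\dots,\alpha b_m$ of the inner ${_{m+2}\phi_{m+1}}$ all become $0$, producing the series displayed in Theorem \ref{liunewthmd}.

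The one point requiring a moment's care is that sending a lower parameter of a basic hypergeometric series to $0$ could in principle introduce a $1/(0;q)_k$ factor. However, since $(0;q)_k=1$ for every $k$, the inner series with all lower parameters equal to $0$ is perfectly well-defined; moreover the presence of the upper parameter $q^{-n}$ forces each inner sum to terminate at $k=n$, so only finitely many terms occur and no issue of vanishing denominators or divergence can arise. Thus the main obstacle is essentially nonexistent: Theorem \ref{liunewthmd} is a clean specialization of Theorem \ref{liunewthmc}, and the proof consists only of substituting $b=b_1=\cdots=b_m=0$ and simplifying via $(0;q)_\infty=1$ and $(0;q)_k=1$.
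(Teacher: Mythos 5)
Your proposal is correct and matches the paper exactly: the paper obtains Theorem \ref{liunewthmd} precisely by setting $b=b_1=\cdots=b_m=0$ in Theorem \ref{liunewthmc}, with the same simplifications via $(0;q)_\infty=1$ and $(0;q)_k=1$. Your extra remarks on the hypothesis being satisfied and on the termination of the inner sum are sound but routine.
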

  \begin{thm} \label{liunewthme} If
$\max\{|\alpha b|, |\alpha b_1|, \cdots |\alpha b_m|\}<1, $  then,  we have
 \begin{align*}
\frac{(\alpha q; q)_\infty}{(\alpha b; q)_\infty \prod_{j=1}^m (\alpha b_j; q)_\infty}&=\sum_{n=0}^\infty  \frac{(1-\alpha q^{2n}) (\alpha; q)_n (-1)^n q^{n(n-1)/2}}
{(1-\alpha)(q; q)_n}\\
&\quad \times{_{m+2}\phi_{m+1}}\left( {{q^{-n}, \alpha q^n, 0, 0, \cdots,  0}
\atop{\alpha b, \alpha b_1, \cdots, \alpha b_m}}; q, q\right).
 \end{align*}
 \end{thm}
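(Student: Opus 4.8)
The plan is to deduce Theorem~\ref{liunewthme} directly from Theorem~\ref{liunewthmc} by specializing the parameters $c_1, c_2, \ldots, c_m$ to zero. Since the hypothesis of Theorem~\ref{liunewthmc} involves only $\alpha b, \alpha b_1, \ldots, \alpha b_m$ and makes no reference to the $c_j$, the convergence condition $\max\{|\alpha b|, |\alpha b_1|, \cdots, |\alpha b_m|\} < 1$ carries over unchanged, so this substitution is legitimate and no new hypotheses are needed.

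First I would examine the left-hand side. Each factor $(\alpha c_j; q)_\infty$ becomes $(0; q)_\infty$, and from the defining product $(a; q)_\infty = \prod_{k=0}^\infty (1 - a q^k)$ we read off $(0; q)_\infty = 1$. Hence the entire product $\prod_{j=1}^m (\alpha c_j; q)_\infty$ collapses to $1$, and the left-hand side of Theorem~\ref{liunewthmc} reduces to $(\alpha q; q)_\infty / \bigl((\alpha b; q)_\infty \prod_{j=1}^m (\alpha b_j; q)_\infty\bigr)$, which is exactly the left-hand side of Theorem~\ref{liunewthme}.

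Next I would turn to the right-hand side. The prefactor $(1 - \alpha q^{2n})(\alpha; q)_n (-1)^n q^{n(n-1)/2} / \bigl((1-\alpha)(q; q)_n\bigr)$ is independent of the $c_j$ and is therefore unaffected. Setting each $c_j = 0$ replaces the upper parameters $\alpha c_1, \ldots, \alpha c_m$ of the ${_{m+2}\phi_{m+1}}$ by $m$ zeros, producing precisely the series ${_{m+2}\phi_{m+1}}\left({{q^{-n}, \alpha q^n, 0, \cdots, 0}\atop{\alpha b, \alpha b_1, \cdots, \alpha b_m}}; q, q\right)$ appearing in the statement. Here one need only check that the zero numerator parameters contribute the harmless factors $(0; q)_k = 1$ to each term, and that the exponent $1 + s - r = 1 + (m+1) - (m+2) = 0$ leaves the defining prefactor of ${_r\phi_s}$ equal to $1$, so the series is genuinely well defined. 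Combining the two reductions yields the asserted identity. There is essentially no obstacle here: the only points requiring any care are the evaluation $(0; q)_\infty = 1$ on the left and the observation that the substitution respects the stated convergence hypothesis, both of which are immediate.
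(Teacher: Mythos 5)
Your proposal is correct and coincides with the paper's own (one-line) derivation: the paper obtains Theorem~\ref{liunewthme} precisely by setting $c_1=c_2=\cdots=c_m=0$ in Theorem~\ref{liunewthmc}. Your additional checks that $(0;q)_\infty=1$, that the exponent $1+s-r$ vanishes for ${_{m+2}\phi_{m+1}}$, and that the convergence hypothesis is unaffected are all accurate and fill in exactly the routine details the paper leaves implicit.
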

 Many applications of Theorems~\ref{liunewthmb}, \ref{liunewthmc}, \ref{liunewthmd}, \ref{liunewthme} are discussed
 in this paper. For example, we prove the following rather striking identities:
  \begin{equation*}
 \(\sum_{n=-\infty}^\infty (-1)^n q^{n^2}\)^{m+2}=1+2\sum_{n=1}^\infty (-1)^n {_{m+2}\phi_{m+1}}\left( {{q^{-n},  q^n, q, \cdots,  q}
\atop{-q, 0, \cdots, 0}}; q, q\right),
 \end{equation*}
 \begin{equation*}
 \prod_{j=1}^\infty \frac{1}{(1-q^j)^{m}}=\sum_{n=0}^\infty (-1)^n q^{n(n-1)/2}{_{m+2}\phi_{m+1}}\left( {{q^{-n},  q^{n+1}, 0, \cdots, 0}
\atop{q, q, \cdots, q}}; q, q\right),
\end{equation*}
\begin{equation*}
 \prod_{j=1}^\infty (1-q^j)^{m+1}=\sum_{n=0}^\infty (-1)^n q^{n(n-1)/2}{_{m+2}\phi_{m+1}}\left( {{q^{-n},  q^{n+1}, q, \cdots, q}
\atop{0, 0, \cdots, 0}}; q, q\right).
\end{equation*}
\begin{equation*}
\(\sum_{n=-\infty}^\infty q^{n^2}\)\(\sum_{n=0}^\infty q^{n(n+1)/2}\)=\sum_{n=0}^\infty \sum_{j=-n}^n (-1)^{n+j}(1+q^{2n+1}) q^{2n^2+n-2j^2}.
\end{equation*}
It is obvious that the first identity of the above gives a new method to compute the number of representations of a positive integer as sums of any number of squares.

Let $p(n)$ denote the number of ways in which $n$ can be written as a sum of positive integers. Then it is well-known that
\[
\sum_{n=0}^\infty p(n)q^n =\prod_{j=1}^\infty \(\frac{1}{1-q^j}\).
\]

Thus the case $m=1$ of the second identity of the above gives a new identity involving the partition:
\[
\sum_{n=0}^\infty p(n)q^n=\sum_{n=0}^\infty (-1)^n q^{n(n-1)/2}{_{3}\phi_{2}}\left( {{q^{-n},  q^{n+1}, 0}
\atop{q, q}}; q, q\right).
\]

We also prove the following identity, which is similar to Andrews' identity \cite[Eq. (1.5)]{Andrews86} for sums of three triangular numbers,
which also implies Gauss' famous result that every integer is the sum of three triangular numbers:
\begin{equation*}
\(\sum_{j=0}^\infty q^{j(j+1)/2}\)^3=\sum_{n=0}^\infty \sum_{j=-n}^n \(\frac{1+q^{2n+1}}{1-q^{2n+1}}\) q^{2n^2+2n-2j^2-j}.
\end{equation*}

Taking $m=1, b_1=c, c_1=bc/q $ in Theorem~\ref{liunewthmb} and using the $q$-Pfaff-Saalsch\"utz summation formula
in the resulting equation, we can obtain the following theorem. It should be pointed out that
there is a misprint in \cite[Theorem~1.3]{Liu2013}.
\begin{thm} \label{rogersthm} {\rm(Rogers' $_6\phi_5$ summation)}For $|\alpha abc/q^2|<1,$ we have
\begin{align*}
{_6 \phi_5} \left({{\alpha, q\sqrt{\alpha}, -q\sqrt{\alpha}, q/a, q/b, q/c}
\atop{\sqrt{\alpha}, -\sqrt{\alpha},\alpha a, \alpha b, \alpha c}}; q, \frac{\alpha abc}{q^2}\right)
 =\frac{(\alpha q, \alpha ab/q, \alpha ac/q, \alpha bc/q; q)_\infty}
{(\alpha a, \alpha b, \alpha c, \alpha abc/q^2; q)_\infty}. \nonumber
\end{align*}
\end{thm}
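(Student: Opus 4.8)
The plan is to specialize Theorem~\ref{liunewthmb} to $m=1$ with the choice $b_1=c$ and $c_1=bc/q$, evaluate the resulting inner $_3\phi_2$ in closed form by the $q$-Pfaff--Saalsch\"utz summation, and then recognize the surviving single sum as the very-well-poised $_6\phi_5$ on the left of Rogers' formula.

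First I would make the substitution. On the left-hand side of Theorem~\ref{liunewthmb} the extra product $\frac{(\alpha ab_1/q,\alpha c_1;q)_\infty}{(\alpha ac_1/q,\alpha b_1;q)_\infty}$ becomes $\frac{(\alpha ac/q,\alpha bc/q;q)_\infty}{(\alpha abc/q^2,\alpha c;q)_\infty}$, so the entire left-hand side collapses to
\[
\frac{(\alpha q,\alpha ab/q,\alpha ac/q,\alpha bc/q;q)_\infty}{(\alpha a,\alpha b,\alpha c,\alpha abc/q^2;q)_\infty},
\]
which is exactly the infinite-product side of Rogers' identity. Hence it remains only to transform the right-hand side. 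The inner series is now $_3\phi_2\!\left({q^{-n},\alpha q^n,\alpha bc/q\atop \alpha b,\alpha c};q,q\right)$, and a check of the balance condition $q\cdot q^{-n}\cdot\alpha q^n\cdot(\alpha bc/q)=\alpha^2 bc=(\alpha b)(\alpha c)$ shows it is Saalsch\"utzian. Applying the $q$-Pfaff--Saalsch\"utz summation evaluates it as $\frac{(bq^{-n},q/c;q)_n}{(\alpha b,q^{1-n}/(\alpha c);q)_n}$, leaving a single sum over $n$.

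The main work, and the step most prone to sign and $q$-power errors, is to show term-by-term that this single sum equals the asserted $_6\phi_5$. Here I would use the elementary inversions $(bq^{-n};q)_n=(-b)^n q^{-n(n+1)/2}(q/b;q)_n$ and $(q^{1-n}/(\alpha c);q)_n=(-1/(\alpha c))^n q^{-n(n-1)/2}(\alpha c;q)_n$ to clear the $q^{-n}$-shifted parameters. The signs cancel and the exponents of $q$ combine so that the leftover factor $(a/q)^n$, together with these two conversions, produces exactly $\left(\alpha abc/q^2\right)^n\frac{(q/b,q/c;q)_n}{(\alpha b,\alpha c;q)_n}$; multiplying by the factors $\frac{(\alpha,q/a;q)_n}{(q,\alpha a;q)_n}$ already present recovers the full hypergeometric quotient $\frac{(\alpha,q/a,q/b,q/c;q)_n}{(q,\alpha a,\alpha b,\alpha c;q)_n}\left(\alpha abc/q^2\right)^n$.

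Finally, I would reintroduce the quadratic parameters by means of the very-well-poised reduction $\frac{(q\sqrt\alpha,-q\sqrt\alpha;q)_n}{(\sqrt\alpha,-\sqrt\alpha;q)_n}=\frac{1-\alpha q^{2n}}{1-\alpha}$, so that the weight $\frac{1-\alpha q^{2n}}{1-\alpha}$ carried by each term of our sum is matched against the four theta-type parameters $\alpha,q\sqrt\alpha,-q\sqrt\alpha$ over $\sqrt\alpha,-\sqrt\alpha$ of the $_6\phi_5$. With this, the summand coincides with the general term of $_6\phi_5\!\left({\alpha,q\sqrt\alpha,-q\sqrt\alpha,q/a,q/b,q/c\atop \sqrt\alpha,-\sqrt\alpha,\alpha a,\alpha b,\alpha c};q,\alpha abc/q^2\right)$, completing the proof; the convergence restriction $|\alpha abc/q^2|<1$ is exactly what is needed for the argument $z=\alpha abc/q^2$, the auxiliary constraints $|\alpha a|,|\alpha b|,|\alpha c|<1$ from Theorem~\ref{liunewthmb} being removable by analytic continuation.
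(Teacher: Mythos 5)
Your proposal is correct and follows exactly the route the paper indicates: substitute $m=1$, $b_1=c$, $c_1=bc/q$ into Theorem~\ref{liunewthmb}, evaluate the resulting balanced $_3\phi_2$ by the $q$-Pfaff--Saalsch\"utz summation, and identify the remaining single sum as the very-well-poised $_6\phi_5$. The inversion identities, the resulting factor $(\alpha bc/q)^n$ combining with $(a/q)^n$ to give the argument $\alpha abc/q^2$, and the reduction $\frac{(q\sqrt\alpha,-q\sqrt\alpha;q)_n}{(\sqrt\alpha,-\sqrt\alpha;q)_n}=\frac{1-\alpha q^{2n}}{1-\alpha}$ all check out.
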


The paper is organized as follows. Some limiting cases of Watson's $q$-analog of Whipple's theorem are discussed in
 Section~2.   Some applications of Theorems~\ref{liunewthmb}, \ref{liunewthmc}, \ref{liunewthmd}, \ref{liunewthme} are discussed in
 Section~3.

 Section~4 is devoted to Hecke-type identities. For example, we prove the following theorem.
\begin{thm}\label{liunewthmf} For $|ab|<1,$ we have the $q$-formula
\begin{align*}
&\frac{(q, ab; q)_\infty}{(qa, qb; q)_\infty}\sum_{n=0}^\infty \frac{(q/a, q/b; q)_n (-ab)^n}{(q^2; q^2)_n}\\
&\quad=\sum_{n=0}^\infty \sum_{j=-n}^n (-1)^j \frac{(1-q^{2n+1})(q/a, q/b; q)_n (ab)^n q^{n^2-j^2}}{(qa, qb; q)_n}.
\end{align*}
\end{thm}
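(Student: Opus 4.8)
The plan is to read the left-hand side as an infinite product times a ${}_2\phi_1$, and to create the finite theta sum $\sum_{j=-n}^{n}(-1)^{j}q^{-j^{2}}$ on the right from a limiting case of Watson's $q$-analogue of Whipple's theorem developed in Section~2. The first step is to recast the left series: since $(q^2;q^2)_n=(q;q)_n(-q;q)_n$, one has
\[
\sum_{n=0}^{\infty}\frac{(q/a,q/b;q)_n(-ab)^n}{(q^2;q^2)_n}={}_2\phi_1\left({{q/a,\,q/b}\atop{-q}};q,-ab\right),
\]
so the left-hand side equals $\dfrac{(q,ab;q)_\infty}{(qa,qb;q)_\infty}\,{}_2\phi_1\left({{q/a,\,q/b}\atop{-q}};q,-ab\right)$. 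This displays the very-well-poised data $(q/a,q/b;q)_n/(qa,qb;q)_n$ and points to $\alpha=q$ as the relevant specialization of the expansion machinery, because that choice alone reproduces both the factor $1-q^{2n+1}$ and the denominator $(qa,qb;q)_n$ occurring on the right.

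Second, I would attack the inner sum, which is where the Hecke structure is born. After the substitution $j=k-n$ the right-hand inner sum becomes $(-1)^n\sum_{k=0}^{2n}(-1)^kq^{k(2n-k)}$, the finite theta sum into which a terminating very-well-poised series degenerates when a pair of its free parameters is sent to $0$ (the same confluence producing the Gaussian weight $q^{n^2}$). I would therefore insert the $\alpha=q$ expansion (Theorem~\ref{liunewthmb} with $m=1$, or Theorem~\ref{liuthm2}) and apply the Section~2 limit of Watson's transformation to the resulting terminating inner series, so as to replace it by $q^{n^2}\sum_{j=-n}^{n}(-1)^{j}q^{-j^{2}}$, while the material outside the theta sum collapses to $(1-q^{2n+1})(q/a,q/b;q)_n(ab)^n/(qa,qb;q)_n$, matching the right-hand summand.

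The step I expect to be the true obstacle is the exact bookkeeping of this limit. One must choose the confluent parameters so that the portion of the expansion that is \emph{not} absorbed into the finite theta sum reassembles precisely into the factor ${}_2\phi_1\left({{q/a,\,q/b}\atop{-q}};q,-ab\right)$ on the left, and one must control the ranges of validity. The very-well-poised identity and the intermediate interchanges are licit only on a proper subregion of $\{|ab|<1\}$, whereas the right-hand Hecke series converges throughout $|ab|<1$ because of the factor $q^{n^2}$, and the left-hand side is analytic there; hence the identity on the full disc $|ab|<1$ follows by analytic continuation in $a$ (equivalently in $b$) from the subregion where every manipulation is justified. Once the limiting Watson evaluation and this reassembly are secured, only routine simplification of $q$-shifted factorials remains.
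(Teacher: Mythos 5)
Your overall architecture is the one the paper uses: specialize the expansion machinery at $\alpha=q$ so that the outer factors become $(1-q^{2n+1})(q/a,q/b;q)_n/(qa,qb;q)_n$, and evaluate the resulting terminating inner series by the limiting case of Watson's theorem from Section~2. That second ingredient you have exactly right: Proposition~\ref{WWpp2} with $\alpha=1$, $c=-q$ gives
${_2\phi_1}\bigl({q^{-n},\,q^{n+1}\atop -q};q,-q\bigr)=(-1)^nq^{n(n+1)/2}\sum_{j=-n}^{n}(-1)^jq^{-j^2}$,
and combined with the outer factor $(-ab)^nq^{n(n-1)/2}$ this produces the weight $(ab)^nq^{n^2-j^2}$ on the right. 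Your reading of the theta sum via $j=k-n$ is also correct.

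The gap is at the step you yourself flag as the obstacle, and it is not mere bookkeeping: Theorem~\ref{liunewthmb} with $m=1$ cannot be the starting point. Its left-hand side is always a pure infinite product, whereas the left-hand side of Theorem~\ref{liunewthmf} contains the series $\sum_n(q/a,q/b;q)_n(-ab)^n/(q^2;q^2)_n$, a genuine ${_2\phi_1}$ with argument $-ab$ that is not an infinite product; no choice of $b_1,c_1$ produces it. The tool the paper actually uses is Theorem~\ref{mliuthma} (in its packaged form Theorem~\ref{mliuthmb}), the variant of the expansion carrying an arbitrary coefficient sequence $A_n$ on the left. Choosing $\alpha=q$, $c=-q$, $z=-1$ and $\beta=\gamma=d=h=0$ there --- equivalently $A_k=(q/b;q)_k(-b/q)^k/(q,-q;q)_k$ --- yields exactly the product times your ${_2\phi_1}$ on the left and the terminating ${_2\phi_1}\bigl({q^{-n},\,q^{n+1}\atop -q};q,-q\bigr)$ inside, after which Proposition~\ref{WWpp2} finishes the proof. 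Your fallback ``or Theorem~\ref{liuthm2}'' can be made to work, but only by first re-deriving Theorem~\ref{mliuthma} from it, which is precisely the step your plan leaves open. Finally, the analytic-continuation argument is unnecessary here: the inner series is terminating, so for $|ab|<1$ every manipulation converges outright.
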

When $a=1$ and $b=0,$  Theorem \ref{liunewthmf} immediately reduces to the following well-known identity
due to Andrews, Dyson and Hickerson \cite{ADH1988}.
\begin{prop}\label{aliupp1} {(\rm Andrews, Dyson and Hickerson )} We have
\[
\sum_{n=0}^\infty \frac{q^{n(n+1)/2}}{(-q; q)_n}
=\sum_{n=0}^\infty \sum_{j=-n}^n (-1)^{n+j} (1-q^{2n+1})q^{(3n^2+n)/2-j^2}.
\]
\end{prop}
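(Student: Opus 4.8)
The plan is to specialize Theorem~\ref{liunewthmf} to $a=1$ and then let $b\to 0$. Setting $a=1$ is immediate, since then $(q/a;q)_n=(q;q)_n$ and $(qa;q)_n=(q;q)_n$. The delicate part is the passage $b\to 0$, because $q/b$ diverges; the whole argument rests on the elementary limit
\[
\lim_{b\to 0}(q/b;q)_n\,b^n=(-1)^n q^{n(n+1)/2},
\]
which follows by pulling a factor $-q^{k+1}/b$ out of each term in $(q/b;q)_n=\prod_{k=0}^{n-1}(1-q^{k+1}/b)$ and observing that the product of the remaining factors tends to $1$. I would also record the factorization $(q^2;q^2)_n=(q;q)_n(-q;q)_n$, which is the engine that produces the denominator $(-q;q)_n$ on the left.

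For the left-hand side, the prefactor $(q,ab;q)_\infty/(qa,qb;q)_\infty$ collapses to $1$ once $a=1$ and $b\to 0$: with $a=1$ the factor $(qa;q)_\infty=(q;q)_\infty$ cancels the $(q;q)_\infty$ in the numerator, leaving $(b;q)_\infty/(qb;q)_\infty=1-b\to 1$. In the series, with $a=1$ the summand is $(q;q)_n(q/b;q)_n(-b)^n/(q^2;q^2)_n$; applying the displayed limit turns $(q/b;q)_n(-b)^n$ into $q^{n(n+1)/2}$, and the factorization of $(q^2;q^2)_n$ reduces $(q;q)_n/(q^2;q^2)_n$ to $1/(-q;q)_n$. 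This reproduces exactly $\sum_{n\ge 0}q^{n(n+1)/2}/(-q;q)_n$.

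For the right-hand side I would carry out the same substitutions term by term. With $a=1$ and $b\to 0$ the denominator $(qa,qb;q)_n$ becomes $(q;q)_n$, which cancels the $(q;q)_n$ coming from $(q/a;q)_n$, while the surviving factor $(q/b;q)_n b^n$ again contributes $(-1)^n q^{n(n+1)/2}$. After absorbing the $(-1)^n$ into the $(-1)^j$ to form $(-1)^{n+j}$, the only remaining bookkeeping is the exponent: combining $q^{n(n+1)/2}$ with $q^{n^2-j^2}$ gives $q^{(3n^2+n)/2-j^2}$, which matches the claimed double sum.

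The only genuine obstacle is justifying the interchange of the limit $b\to 0$ with the summations. Since the hypothesis $|ab|<1$ of Theorem~\ref{liunewthmf} holds throughout a neighborhood of $b=0$ when $a=1$, and each individual summand has the finite limit computed above, term-by-term passage to the limit should be legitimate; the inner sum over $j$ is finite for each $n$, so only the single limit in the outer index requires a dominated-convergence (or uniform-convergence for fixed $q$ with $0<q<1$) justification. Everything else is routine $q$-factorial algebra.
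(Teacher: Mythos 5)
Your proposal is correct and is exactly the paper's route: the paper obtains the proposition by specializing Theorem~\ref{liunewthmf} at $a=1$, $b=0$, and your computation simply fills in the details of the limit $(q/b;q)_n b^n \to (-1)^n q^{n(n+1)/2}$ and the resulting bookkeeping, all of which checks out. The paper treats this as immediate, so no further comparison is needed.
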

Setting $(a, b)=(q^{1/2}, -q^{1/2}), (0, 0)$ and $(-1, 0)$, respectively, in Theorem \ref{liunewthmf}, we are
led to the following proposition.
\begin{prop}\label{aliupp2} We have the Hecke-type series identities
\begin{align*}
&\sum_{n=0}^\infty \frac{(q; q^2)_n q^n}{(q^2; q^2)_n}
=\frac{(q; q^2)_\infty}{(q^2; q^2)_\infty}
\sum_{n=0}^\infty \sum_{j=-n}^n (-1)^{n+j} q^{n^2+n-j^2},\\
&\sum_{n=0}^\infty \frac{(-1)^n q^{n^2+n}}{(q^2; q^2)_n}
=\frac{1}{(q; q)_\infty} \sum_{n=0}^\infty \sum_{j=-n}^n (1-q^{2n+1}) (-1)^j q^{2n^2+n-j^2},\\
&\sum_{n=0}^\infty \frac{(-1)^n q^{n(n+1)/2}}{(q; q)_n}
=\frac{(-q; q)_\infty}{(q; q)_\infty}
\sum_{n=0}^\infty \sum_{j=-n}^n (-1)^j (1-q^{2n+1}) q^{(3n^2+n)/2-j^2}.
\end{align*}
\end{prop}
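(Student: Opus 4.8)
The plan is to obtain all three identities by specializing Theorem~\ref{liunewthmf} at the three parameter choices, reading off the product prefactor and the double sum in each case, and dividing through by the resulting infinite product. I start with $(a,b)=(q^{1/2},-q^{1/2})$, the only choice needing no limit. The crucial point is that the paired $q$-shifted factorials merge into $q^2$-factorials: from $q/a=q^{1/2}$, $q/b=-q^{1/2}$ I would use $(q^{1/2};q)_n(-q^{1/2};q)_n=(q;q^2)_n$ and likewise $(qa;q)_n(qb;q)_n=(q^3;q^2)_n$, while $ab=-q$. The left summand then becomes $(q;q^2)_n q^n/(q^2;q^2)_n$, matching the target, and on the right $(1-q^{2n+1})(q;q^2)_n/(q^3;q^2)_n$ telescopes to the constant $1-q$, so that Theorem~\ref{liunewthmf} reduces to
\[
\frac{(q;q)_\infty(-q;q)_\infty}{(q^3;q^2)_\infty}\sum_{n=0}^\infty\frac{(q;q^2)_n q^n}{(q^2;q^2)_n}=(1-q)\sum_{n=0}^\infty\sum_{j=-n}^n(-1)^{n+j}q^{n^2+n-j^2}.
\]
I would finish by simplifying the prefactor via $(q;q^2)_\infty=(1-q)(q^3;q^2)_\infty$, the even/odd split $(q;q)_\infty=(q;q^2)_\infty(q^2;q^2)_\infty$, and Euler's identity $(-q;q)_\infty(q;q^2)_\infty=1$, which reduce it to $(1-q)(q^2;q^2)_\infty/(q;q^2)_\infty$; cancelling against the factor $1-q$ on the right leaves exactly the target prefactor $(q;q^2)_\infty/(q^2;q^2)_\infty$.

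For the other two identities the parameters tend to $0$, so $q/a$ and $q/b$ diverge and I must take termwise limits. The governing asymptotic is that, although $(q/a;q)_n$ blows up, the product $(q/a;q)_n\,a^n\to(-1)^n q^{n(n+1)/2}$ as $a\to0$ (and similarly in $b$), so the combination $(q/a,q/b;q)_n(ab)^n$ survives with a finite limit. Letting $a,b\to0$, the prefactor tends to $(q;q)_\infty$, the left summand tends to $(-1)^n q^{n^2+n}/(q^2;q^2)_n$, and the right double sum becomes $\sum_{j=-n}^n(-1)^j(1-q^{2n+1})q^{2n^2+n-j^2}$; dividing by $(q;q)_\infty$ gives the second identity. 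For the third I set $a=-1$ and let $b\to0$: then $(q/a;q)_n=(-q;q)_n$, the prefactor tends to $(q;q)_\infty/(-q;q)_\infty$, and after using $(q^2;q^2)_n=(q;q)_n(-q;q)_n$ to rewrite $(-q;q)_n/(q^2;q^2)_n=1/(q;q)_n$ the left summand becomes $(-1)^n q^{n(n+1)/2}/(q;q)_n$; collecting the exponent $n(n+1)/2+n^2=(3n^2+n)/2$ on the right yields the claimed Hecke-type sum.

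The bookkeeping of these collapses and exponents is routine; the one genuinely non-formal point is justifying the interchange of limit and infinite summation in the last two cases. The hard part will be controlling the termwise limits uniformly, since one must bound the tail of the series independently of the small parameter. This is manageable because the Gaussian factors $q^{n^2+n}$ (respectively $q^{(3n^2+n)/2}$) force rapid convergence while $(q/a;q)_n\,a^n$ stays bounded for $a$ near $0$, so dominated convergence applies; alternatively, after clearing the divergent factors one may view both sides of Theorem~\ref{liunewthmf} as analytic functions of $a$ and $b$ near the origin and conclude by continuity. With that step secured, each identity follows by dividing the specialized relation by its infinite-product prefactor.
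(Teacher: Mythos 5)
Your proposal is correct and follows exactly the paper's route: the paper obtains Proposition \ref{aliupp2} precisely by the specializations $(a,b)=(q^{1/2},-q^{1/2})$, $(0,0)$, $(-1,0)$ in Theorem \ref{liunewthmf}, and your detailed verifications (the merging $(q^{1/2};q)_n(-q^{1/2};q)_n=(q;q^2)_n$, the telescoping $(q;q^2)_n/(q^3;q^2)_n=(1-q)/(1-q^{2n+1})$, and the limits $(q/a;q)_n a^n\to(-1)^n q^{n(n+1)/2}$) are all accurate. You supply the limit-justification details that the paper leaves implicit, but the argument is the same.
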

In Section~5, we prove the following theorem by using the Sears $_4\phi_3$ transformation.
\begin{thm}\label{newliuthmg} For $|{\alpha ab }/{q}|<1,$ we have the $q$-formula
\begin{align*}
&\frac{(\alpha q, \alpha ab/q; q)_\infty}
{(\alpha a, \alpha b; q)_\infty} {_3\phi_2} \left({{q/a, q/b, \beta} \atop { c, d}} ;  q, \frac{\alpha ab }{q} \right) \\
&=\sum_{n=0}^\infty \frac{(1-\alpha q^{2n}) (\alpha, q/a, q/b, q\alpha/c; q)_n (\alpha abc)^n q^{n^2-2n}}
{(1-\alpha)(q, \alpha a, \alpha b, c; q)_n}\\
&\qquad \qquad \times{_3\phi_2} \left({{q^{-n}, \alpha q^n, d/\beta} \atop { d, q\alpha/c}} ;  q, \frac{q\beta}{c} \right).
\end{align*}
\end{thm}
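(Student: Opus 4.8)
The plan is to apply Theorem~\ref{liuthm2} to a function $f$, analytic near $x=0$, whose value at $x=\alpha a$ reproduces the ${}_3\phi_2$ on the left. The key preliminary observation is that, although the summand of that ${}_3\phi_2$ depends on $a$ through both $(q/a;q)_j$ and the argument $(\alpha ab/q)^j$, these combine into a function of the single variable $x=\alpha a$:
\[
(q/a;q)_j\left(\frac{\alpha ab}{q}\right)^j=\left(\frac{b}{q}\right)^j\prod_{i=1}^{j}(\alpha a-\alpha q^i).
\]
Hence I would set
\[
f(x)=\sum_{j=0}^\infty\frac{(q/b,\beta;q)_j}{(q,c,d;q)_j}\left(\frac{b}{q}\right)^j\prod_{i=1}^{j}(x-\alpha q^i),
\]
whose $j$-th term behaves like $(bx/q)^j$, so $f$ is analytic for $|x|<q/|b|$, and for which $f(\alpha a)={}_3\phi_2\!\left({{q/a,q/b,\beta}\atop{c,d}};q,\alpha ab/q\right)$. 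The left-hand side of Theorem~\ref{newliuthmg} is then exactly $\frac{(\alpha q,\alpha ab/q;q)_\infty}{(\alpha a,\alpha b;q)_\infty}f(\alpha a)$, matching the left-hand side of Theorem~\ref{liuthm2}; the hypothesis $|\alpha ab/q|<1$ is precisely the condition $|\alpha a|<q/|b|$ needed here, and it also legitimises the interchange of summation below.

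The second step is to evaluate $f$ at the interpolation nodes $x=\alpha q^{k+1}$. Because $\prod_{i=1}^{j}(\alpha q^{k+1}-\alpha q^i)$ acquires the vanishing factor $(\alpha q^{k+1}-\alpha q^{k+1})$ as soon as $j>k$, the series for $f(\alpha q^{k+1})$ truncates to $0\le j\le k$, with
\[
\prod_{i=1}^{j}(\alpha q^{k+1}-\alpha q^i)=(-1)^j\alpha^j q^{j(j+1)/2}\frac{(q;q)_k}{(q;q)_{k-j}}.
\]
Substituting this into the inner sum of Theorem~\ref{liuthm2}, I would interchange the $k$- and $j$-summations; the factor $(q;q)_k$ cancels, and after the shift $k=j+r$ the sum over $r$ is a terminating ${}_2\phi_1$ evaluated by the $q$-Chu--Vandermonde summation. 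Simplifying the leftover products with the inversion relation $(x;q)_m=(-x)^mq^{\binom{m}{2}}(q^{1-m}/x;q)_m$, the whole inner sum collapses to
\[
\sum_{k=0}^{n}\frac{(q^{-n},\alpha q^n;q)_k q^k}{(q,\alpha b;q)_k}f(\alpha q^{k+1})=\frac{(q/b;q)_n(-\alpha b)^n q^{\binom{n}{2}}}{(\alpha b;q)_n}\,{}_3\phi_2\!\left({{q^{-n},\alpha q^n,\beta}\atop{c,d}};q,q\right).
\]

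Finally I would invoke the Sears transformation in its terminating ${}_3\phi_2$ form, which for the parameters at hand reads
\[
{}_3\phi_2\!\left({{q^{-n},\alpha q^n,\beta}\atop{c,d}};q,q\right)=\frac{(q\alpha/c;q)_n(-c)^nq^{\binom{n}{2}}}{(c;q)_n}\,{}_3\phi_2\!\left({{q^{-n},\alpha q^n,d/\beta}\atop{d,q\alpha/c}};q,\frac{q\beta}{c}\right),
\]
sending $\beta\mapsto d/\beta$, $c\mapsto q\alpha/c$ and the argument $q\mapsto q\beta/c$. Inserting this together with the previous display into Theorem~\ref{liuthm2}, the three monomial factors multiply to $(a/q)^n(-\alpha b)^n(-c)^n=(\alpha abc)^nq^{-n}$, which together with the two factors $q^{\binom{n}{2}}$ yields $(\alpha abc)^nq^{n^2-2n}$, while the Pochhammer symbols assemble into $(\alpha,q/a,q/b,q\alpha/c;q)_n/(q,\alpha a,\alpha b,c;q)_n$; this is exactly the coefficient of Theorem~\ref{newliuthmg}, which completes the argument. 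I expect the main obstacle to be twofold: first, the nonobvious construction of $f$, namely recognising the displayed factorisation that turns the $a$-dependent ${}_3\phi_2$ into a genuine function of $x=\alpha a$ expanded in the polynomial basis $\prod_{i=1}^{j}(x-\alpha q^i)$; and second, the careful bookkeeping of signs and powers of $q$ through the two applications of the inversion relation, which must recombine precisely as $q^{\binom{n}{2}}q^{\binom{n}{2}}q^{-n}=q^{n^2-2n}$.
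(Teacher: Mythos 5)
Your proof is correct and takes essentially the same route as the paper: both reduce the theorem to the intermediate identity whose $n$-th inner series is ${_3\phi_2}\left({{q^{-n},\alpha q^n,\beta}\atop{c,d}};q,q\right)$ with prefactor $(q/b;q)_n(-\alpha b)^n q^{n(n-1)/2}/(\alpha b;q)_n$, and then apply the $\gamma=0$ case of the Sears transformation, i.e.\ (\ref{R:eqn12}). The only difference is that you rederive that intermediate identity directly from Theorem~\ref{liuthm2} by constructing $f$ explicitly and collapsing the inner sum with $q$-Chu--Vandermonde, whereas the paper obtains it at once by setting $\gamma=h$ and $z=1$ in Theorem~\ref{mliuthmb}; your final bookkeeping $(a/q)^n(-\alpha b)^n(-c)^n q^{2\binom{n}{2}-n}=(\alpha abc)^n q^{n^2-2n}$ checks out.
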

By letting $a=b=d=0$ and $c=-q$ and replacing $\beta$ by $-q\beta$,
we get the following formula due to Andrews (see,  for example,  \cite[Theorem~1]{Andrews2012}).
\begin{prop}\label{aliupp3} {\rm (Andrews) } We have the $q$-identity
\begin{align*}
&\sum_{n=0}^\infty \frac{q^{n^2} (-q\beta; q )_n \alpha^n}{(q^2; q^2)_n}\\
&=\frac{1}{(q\alpha; q)_\infty}
\sum_{n=0}^\infty \frac{(1-\alpha q^{2n}) (\alpha^2; q^2)_n (-\alpha)^n q^{2n^2}}{(1-\alpha)(q^2; q^2)_n}
{_2\phi_1} \left({{q^{-n}, \alpha q^n} \atop { -\alpha}} ;  q, q\beta \right).
\end{align*}
\end{prop}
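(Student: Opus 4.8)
The plan is to obtain Proposition~\ref{aliupp3} directly from Theorem~\ref{newliuthmg} by the specialization announced in the text: set $d=0$ and $c=-q$, replace $\beta$ by $-q\beta$, and then let $a\to0$ and $b\to0$. The only genuine work lies in controlling the two limits $a,b\to0$, because the parameters $q/a$ and $q/b$ appearing in the $q$-shifted factorials blow up; one must check that the offending powers of $a$ and $b$ cancel against the factors $(\alpha ab/q)^n$ on the left and $(\alpha abc)^n$ on the right. The two tools I expect to use repeatedly are the elementary limit $\lim_{a\to0}(q/a;q)_n\,a^n=(-1)^n q^{n(n+1)/2}$ (since $(q/a;q)_n a^n=\prod_{k=0}^{n-1}(a-q^{k+1})$) and the quadratic factorization $(x;q)_n(-x;q)_n=(x^2;q^2)_n$.

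First I would treat the left-hand side of Theorem~\ref{newliuthmg}. As $a,b\to0$ the prefactor $\frac{(\alpha q,\alpha ab/q;q)_\infty}{(\alpha a,\alpha b;q)_\infty}$ tends to $(\alpha q;q)_\infty$. In the ${_3\phi_2}$ the limit applied to both $q/a$ and $q/b$, combined with the argument $(\alpha ab/q)^n$, collapses the two divergent factors and the power $z^n$ into $q^{n^2}\alpha^n$, so the $n$-th summand becomes $\frac{q^{n^2}\alpha^n(\beta;q)_n}{(q,c,d;q)_n}$. Putting $d=0$ (so that $(d;q)_n=1$), $c=-q$, and $\beta\mapsto-q\beta$, and using $(q;q)_n(-q;q)_n=(q^2;q^2)_n$, the left side reduces to $(\alpha q;q)_\infty\sum_{n\ge0}\frac{q^{n^2}(-q\beta;q)_n\,\alpha^n}{(q^2;q^2)_n}$.

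Next I would process the right-hand side in the same fashion. The limit applied to $(q/a;q)_n(q/b;q)_n(\alpha abc)^n$ produces $q^{n(n+1)}\alpha^n c^n$; multiplying by $q^{n^2-2n}$ and setting $c=-q$ turns this into $(-\alpha)^n q^{2n^2}$. Since $q\alpha/c=-\alpha$ we have $(q\alpha/c;q)_n=(-\alpha;q)_n$, and then $(\alpha;q)_n(-\alpha;q)_n=(\alpha^2;q^2)_n$ together with $(q;q)_n(-q;q)_n=(q^2;q^2)_n$ turns the outer coefficient into exactly $\frac{(1-\alpha q^{2n})(\alpha^2;q^2)_n(-\alpha)^n q^{2n^2}}{(1-\alpha)(q^2;q^2)_n}$. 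For the inner series, $d=0$ forces $d/\beta=0$, so the upper parameter $0$ and the lower parameter $0$ each contribute $(0;q)_k=1$ and cancel; with $q\alpha/c=-\alpha$ and $q\beta/c$ equal to $q\beta$ after $\beta\mapsto-q\beta$, the inner ${_3\phi_2}$ collapses to ${_2\phi_1}\left({{q^{-n},\alpha q^n}\atop{-\alpha}};q,q\beta\right)$, precisely as required.

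Comparing the two sides, Theorem~\ref{newliuthmg} now asserts that $(\alpha q;q)_\infty$ times the left sum equals the right sum; dividing through by $(q\alpha;q)_\infty=(\alpha q;q)_\infty$ yields the stated identity. The main obstacle, as flagged above, is the bookkeeping of the $a,b\to0$ limits: one must verify that the divergent $(q/a;q)_n$ and $(q/b;q)_n$ combine cleanly with the vanishing $(\alpha ab/q)^n$ and $(\alpha abc)^n$, and that interchanging the limit with the summation is legitimate. The latter is harmless here because the $q^{n^2}$ and $q^{2n^2}$ factors render both series superexponentially convergent, so dominated convergence applies termwise; everything else is routine application of the factorization $(x;q)_n(-x;q)_n=(x^2;q^2)_n$.
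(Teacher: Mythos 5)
Your proposal is correct and is exactly the paper's route: the paper obtains Proposition~\ref{aliupp3} from Theorem~\ref{newliuthmg} by the very specialization $a=b=d=0$, $c=-q$, $\beta\mapsto -q\beta$ that you carry out, and your limit bookkeeping via $(q/a;q)_n a^n\to(-1)^nq^{n(n+1)/2}$ and the factorization $(x;q)_n(-x;q)_n=(x^2;q^2)_n$ checks out on both sides.
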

\section{Some limiting cases of Watson's $q$-analog of Whipple's theorem}
Watson's $q$-analog of Whipple's theorem (see, for example, \cite[Eq. (2.5.1)]{Gas+Rah} and \cite[Theorem~5]{Liu})
 can be stated in the following theorem.
\begin{thm}\label{WatsonWhipplethm} {\rm (Watson)} If $n$ is a nonnegative integer, then,  we have
\begin{align*}
&\frac{(\alpha q, \alpha ab/q; q)_n}{(\alpha a, \alpha b; q)_n}
{_4\phi_3}\left({{q^{-n}, q/a, q/b, \alpha cd/q }\atop{\alpha c, \alpha d, q^2/{\alpha ab q^n}}}; q, q\right)\\
&={_8\phi_7}\({{q^{-n}, q\sqrt{\alpha}, -q\sqrt{\alpha}, \alpha, q/a, q/b, q/c, q/d}
\atop{\sqrt{\alpha}, -\sqrt{\alpha}, \alpha a, \alpha b, \alpha c, \alpha d, \alpha q^{n+1}}}; q, \frac{\alpha^2 abcdq^n}{q^2}\).
\end{align*}
\end{thm}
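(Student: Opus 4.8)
Since Theorem~\ref{WatsonWhipplethm} is the classical terminating Watson transformation, my first step is to fix the parameter dictionary so that the asserted identity is literally the standard passage from a very-well-poised ${}_8\phi_7$ to a balanced ${}_4\phi_3$. Reading the right-hand side as a very-well-poised series with base $\alpha$ and the five free parameters $q/a,q/b,q/c,q/d,q^{-n}$, I would check the well-poised pairings $(q/a)(\alpha a)=(q/b)(\alpha b)=(q/c)(\alpha c)=(q/d)(\alpha d)=(q^{-n})(\alpha q^{n+1})=\alpha q$; this confirms that the series is very-well-poised, that it terminates after the $n$-th term because of the factor $q^{-n}$, and that its argument $\alpha^2abcdq^n/q^2$ is exactly $\alpha^2 q^2$ divided by the product of the five free parameters. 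On the left, under the same dictionary the prefactor $(\alpha q,\alpha ab/q;q)_n/(\alpha a,\alpha b;q)_n$ and the top row $q^{-n},q/a,q/b,\alpha cd/q$ of the ${}_4\phi_3$ fall into the standard shape, and one verifies directly that this ${}_4\phi_3$ is Saalsch\"utzian---the structural fact the whole argument will exploit.

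For a self-contained proof I would run the classical interchange-of-summation argument. First I would write the balanced ${}_4\phi_3$ on the left as a finite sum over an index $k$ running from $0$ to $n$. Next I would rewrite one of its balanced ratios of $q$-shifted factorials as an inner finite sum over a fresh index $j$ by applying the terminating $q$-Pfaff--Saalsch\"utz summation in reverse, the factor being chosen precisely so that the $j$-summand carries the very-well-poised kernel $(1-\alpha q^{2j})/(1-\alpha)$. I would then interchange the two summations, evaluate the resulting inner sum over $k$ in closed form by the terminating $q$-Chu--Vandermonde formula, and regroup the surviving $q$-shifted factorials in $j$; after simplification the single sum over $j$ is exactly the very-well-poised ${}_8\phi_7$ on the right, with the correct argument $\alpha^2abcdq^n/q^2$.

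An alternative, which keeps everything inside the $q$-derivative framework of the present paper, is the approach of \cite[Theorem~5]{Liu} built on Theorem~\ref{liuthm1}: one inserts a suitably chosen analytic $f(x)$ assembled from $q$-shifted factorials, so that the expansion kernel $(1-\alpha q^{2n})(\alpha q/a;q)_n a^n/(q,a;q)_n$ produces the very-well-poised weights while the iterated $q$-derivatives $\big[\mathcal{D}^n_{q,x}\{f(x)(x;q)_{n-1}\}\big]_{x=\alpha q}$ generate the balanced ${}_4\phi_3$. Either way the main obstacle is the same bookkeeping step: arranging the manipulation so that the intermediate inner sum is summable in closed form, and then tracking the powers of $q$ and the well-poised pairings carefully enough that the final argument lands exactly on $\alpha^2abcdq^n/q^2$ rather than on a near miss. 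Because both series terminate, there are no convergence subtleties, so all of the difficulty is algebraic.
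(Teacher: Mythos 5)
Your proposal is correct in substance, but note that the paper does not prove this theorem at all: it is stated as a known classical result and simply referenced to Gasper--Rahman, Eq.~(2.5.1), and to \cite[Theorem~5]{Liu}, after which the paper only extracts limiting cases (Propositions~\ref{WWpp1}--\ref{WWpp5}). Your preliminary checks are all accurate --- the well-poised pairings $(q/a)(\alpha a)=\cdots=(q^{-n})(\alpha q^{n+1})=\alpha q$, the termination via $q^{-n}$, the identification of the argument as $\alpha^2q^2$ divided by the product of the five numerator parameters, and the Saalsch\"utzian (balanced) character of the ${}_4\phi_3$ --- and your first route (expand a balanced ratio by the terminating $q$-Pfaff--Saalsch\"utz formula run in reverse, interchange the two finite sums, and collapse the inner sum by terminating $q$-Chu--Vandermonde) is exactly the textbook derivation of (2.5.1) in Gasper--Rahman, so it would constitute a valid self-contained proof; since both series terminate, the interchange needs no justification. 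Your second route is in fact the one the paper implicitly endorses: \cite[Theorem~5]{Liu} obtains Watson's transformation precisely by specializing $f$ in the $q$-derivative expansion of Theorem~\ref{liuthm1}. The only caveat is that your argument is an outline --- the $q$-power bookkeeping that lands the final argument on $\alpha^2abcd q^n/q^2$ is asserted rather than executed --- but there is no missing idea and no step that would fail; relative to the paper, which offers a citation in place of a proof, your write-up supplies strictly more.
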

In this section we discuss some limiting cases of Watson's $q$-analog of Whipple's theorem, which will be used
in this paper.
\begin{prop}\label{WWpp1} For any nonnegative integer $n,$ we have
\begin{align*}
&{_3\phi_2}\left({{q^{-n}, \alpha q^{n+1}, \alpha cd/q }\atop{\alpha c, \alpha d}}; q, q\right)\\
&=(-\alpha)^n q^{n(n+1)/2}\frac{(q; q)_n}{(q\alpha; q)_n}
\sum_{j=0}^n (-1)^j\frac{(1-\alpha q^{2j}) (\alpha, q/c, q/d; q)_j}
{(1-\alpha)(q, \alpha c, \alpha d; q)_j}\(\frac{ cd}{q}\)^{j} q^{-j(j+1)/2}.
\end{align*}
\end{prop}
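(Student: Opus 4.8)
The plan is to obtain this identity as a double limiting case of Watson's theorem (Theorem~\ref{WatsonWhipplethm}). First I would specialize the parameter $b$ so that $q/b=\alpha q^{n+1}$, that is, set $b=q^{-n}/\alpha$, and afterwards let $a\to\infty$. The point of the choice $b=q^{-n}/\alpha$ is that, on the right-hand side of Watson's identity, it produces two clean cancellations. Writing the $_8\phi_7$ as a sum over $j$ and using the very-well-poised reduction $\frac{(q\sqrt{\alpha},-q\sqrt{\alpha};q)_j}{(\sqrt{\alpha},-\sqrt{\alpha};q)_j}=\frac{1-\alpha q^{2j}}{1-\alpha}$, the numerator factor $(q/b;q)_j=(\alpha q^{n+1};q)_j$ cancels the denominator factor $(\alpha q^{n+1};q)_j$, and the numerator factor $(q^{-n};q)_j$ cancels the denominator factor $(\alpha b;q)_j=(q^{-n};q)_j$. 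Since the original $_8\phi_7$ terminates at $j=n$ because of $(q^{-n};q)_j$, I would treat it throughout as the finite sum $\sum_{j=0}^{n}$ and carry out these cancellations inside the admissible range $0\le j\le n$, where every cancelled factor is nonzero; this sidesteps any convergence question and makes the subsequent $a\to\infty$ limit a termwise limit of a finite sum.

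After the cancellations the $_8\phi_7$ becomes $\sum_{j=0}^{n}\frac{1-\alpha q^{2j}}{1-\alpha}\frac{(\alpha,q/a,q/c,q/d;q)_j}{(q,\alpha a,\alpha c,\alpha d;q)_j}(\alpha a\,cd/q^2)^j$, where I have used $\alpha b q^{n}=1$ to reduce the base from $\alpha^2abcd q^{n}/q^2$ to $\alpha a\,cd/q^2$. Now I would let $a\to\infty$ using the leading asymptotics $(q/a;q)_j\to 1$ and $(\alpha a;q)_j\sim(-\alpha a)^{j}q^{j(j-1)/2}$; combining these with the factor $a^{j}$ from the base, the powers of $a$ and $\alpha$ cancel and one is left with $(-1)^{j}(cd/q)^{j}q^{-j(j+1)/2}$, so the whole sum tends to exactly the sum displayed in the proposition.

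On the left-hand side of Watson's identity I would argue in the same spirit. The terminating $_4\phi_3$ has $q/a$ in its top row and $q^2/(\alpha ab q^{n})=q^2/a$ in its bottom row (again using $\alpha b q^{n}=1$); both $(q/a;q)_k$ and $(q^2/a;q)_k$ tend to $1$ as $a\to\infty$, so the $_4\phi_3$ collapses to the target $_3\phi_2$ with top row $q^{-n},\alpha q^{n+1},\alpha cd/q$ and bottom row $\alpha c,\alpha d$. For the prefactor I would use the same large-$a$ asymptotics, namely $(\alpha a;q)_n\sim(-\alpha a)^{n}q^{n(n-1)/2}$ and $(\alpha ab/q;q)_n=(aq^{-n-1};q)_n\sim(-aq^{-n-1})^{n}q^{n(n-1)/2}$, together with $(\alpha b;q)_n=(q^{-n};q)_n=(-1)^{n}q^{-n(n+1)/2}(q;q)_n$, to obtain $\frac{(\alpha q,\alpha ab/q;q)_n}{(\alpha a,\alpha b;q)_n}\to\frac{(\alpha q;q)_n}{(q;q)_n}(-1/\alpha)^{n}q^{-n(n+1)/2}$.

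Finally I would assemble the pieces: in the limit Watson's identity reads $\frac{(\alpha q;q)_n}{(q;q)_n}(-1/\alpha)^{n}q^{-n(n+1)/2}\cdot{_3\phi_2}(\cdots)=\sum_{j=0}^{n}(\cdots)$, and solving for the $_3\phi_2$ multiplies through by the reciprocal prefactor $(-\alpha)^{n}q^{n(n+1)/2}(q;q)_n/(q\alpha;q)_n$, which is precisely the constant standing in front of the sum in the statement. The main obstacle is purely the bookkeeping of the powers of $a$, $\alpha$ and $q$ in the two large-$a$ asymptotic computations (the per-term limit on the $_8\phi_7$ side and the prefactor limit); once those exponents are tracked correctly the sign $(-1)^{j}$ and the Gaussian-type factor $q^{-j(j+1)/2}$ emerge exactly, and there are no analytic subtleties since every series involved is a terminating sum.
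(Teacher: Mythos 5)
Your proposal is correct and is essentially the paper's own proof: the paper takes $q/a=\alpha q^{n+1}$ in Watson's theorem and then lets $b\to\infty$, which by the $a\leftrightarrow b$ symmetry of Theorem~\ref{WatsonWhipplethm} is exactly your specialization $q/b=\alpha q^{n+1}$ followed by $a\to\infty$, with the same cancellations and the same terminating intermediate identity (the paper's equation preceding the limit). Your asymptotic bookkeeping for the prefactor and the per-term limit checks out, so there is nothing to add.
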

\begin{proof}
Taking $q/a=\alpha q^{n+1}$ in Theorem~\ref{WatsonWhipplethm} and simplifying, we find that
\begin{align}
&\frac{(\alpha q, q^2/b; q)_n }{(q, \alpha b; q)_n}\(\frac{b}{q}\)^n
{_4\phi_3}\left({{q^{-n}, \alpha q^{n+1}, q/b, \alpha cd/q }\atop{\alpha c, \alpha d, q^2/b}}; q, q\right)\label{wweqn1}\\
&=\sum_{j=0}^n \frac{(1-\alpha q^{2j}) (\alpha, q/b, q/c, q/d; q)_j}
{(1-\alpha)(q, \alpha b, \alpha c, \alpha d; q)_j}\(\frac{\alpha bcd}{q^2}\)^{j}.\nonumber
\end{align}
Letting $b\to \infty$ in the above equation and simplifying, we complete the proof of Proposition \ref{WWpp1}.
\end{proof}
Letting $d\to \infty$ in Proposition \ref{WWpp1}, we obtain the following proposition.
\begin{prop}\label{WWpp2} For any nonnegative integer $n,$ we have
\begin{align*}
{_2\phi_1}\left({{q^{-n}, \alpha q^{n+1}}\atop{\alpha c}}; q, c\right)
&=(-\alpha)^n q^{n(n+1)/2}\frac{(q; q)_n}{(q\alpha; q)_n}\\
&\qquad\times
\sum_{j=0}^n \frac{(1-\alpha q^{2j}) (\alpha, q/c; q)_j}
{(1-\alpha)(q, \alpha c; q)_j}c^j \alpha^{-j}  q^{-j^2-j}.
\end{align*}
\end{prop}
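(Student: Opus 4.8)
The plan is to specialize Proposition \ref{WWpp1} by letting $d \to \infty$, which should collapse the $_3\phi_2$ on the left into a $_2\phi_1$ and simultaneously trim the right-hand sum to the stated form. First I would write out both sides of Proposition \ref{WWpp1} and track every factor that carries a $d$-dependence through the limit. On the left, the $_3\phi_2$ has numerator parameter $\alpha cd/q$ and denominator parameter $\alpha d$; as $d \to \infty$ the ratio $(\alpha cd/q; q)_k / (\alpha d; q)_k$ tends to $q^{-k}$ (since each leading factor behaves like $(-\alpha d q^{k}/q)/(-\alpha d q^{k-1}) \to 1/q$ in the appropriate sense), which converts the summand so that the series becomes $_2\phi_1\left({q^{-n}, \alpha q^{n+1} \atop \alpha c}; q, c\right)$. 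I expect the factor $q^{-k}$ to combine with the argument $q$ to produce the new argument $c$ after the $\alpha d$ versus $\alpha cd/q$ cancellation is carried out carefully.

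Next I would handle the right-hand side, where the $d$-dependence sits in the factor $(q/d; q)_j / (\alpha d; q)_j$ inside the finite sum over $j$. As $d \to \infty$, standard limiting behavior gives $(q/d; q)_j \to 1$ while $(\alpha d; q)_j$ grows; more precisely I would use that $(q/d;q)_j/(\alpha d;q)_j$ behaves like $(-1)^j \alpha^{-j} q^{-\binom{j}{2}} d^{-j}\cdot(\text{something})$, but since the factor $(cd/q)^j$ on the right also carries $d^j$, the powers of $d$ should cancel exactly, leaving a finite limit. Tracking the $q$-powers from $q^{-\binom{j}{2}}$ together with the existing $q^{-j(j+1)/2}$ and the $(cd/q)^j \to$ surviving $c^j$-type factor is the bookkeeping that yields the exponent $q^{-j^2-j}$ and the coefficient $c^j \alpha^{-j}$ in the stated formula.

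The main obstacle will be this precise bookkeeping of the powers of $d$, $q$, and $\alpha$ in the $d \to \infty$ limit on the right-hand side, since the limit is of the indeterminate form where a shrinking factor $(q/d;q)_j$ meets a growing factor $(\alpha d;q)_j$ in the denominator and a growing factor $(cd/q)^j$ in the numerator. The key identity I would invoke is the elementary limit $\lim_{d\to\infty}(q/d;q)_j\,(cd/q)^j/(\alpha d;q)_j = c^j \alpha^{-j} q^{-j^2}(-1)^{?}$ with the sign and $q$-power determined by comparing leading coefficients term by term; once this is pinned down, multiplying by the surviving $q^{-j(j+1)/2}$ and the overall prefactor $(-\alpha)^n q^{n(n+1)/2}(q;q)_n/(q\alpha;q)_n$ (which is $d$-independent and passes through untouched) completes the derivation. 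Everything else is routine substitution, so I would only need to verify that the convergence conditions permit interchanging the limit with the finite sum, which is automatic here since the $j$-sum terminates at $j=n$.
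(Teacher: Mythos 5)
Your plan is exactly the paper's proof: Proposition \ref{WWpp2} is obtained by letting $d\to\infty$ in Proposition \ref{WWpp1}, and your bookkeeping of the $d$-powers on both sides is the right (and only) work involved. One small correction: on the left the ratio $(\alpha cd/q;q)_k/(\alpha d;q)_k$ tends to $(c/q)^k$, not $q^{-k}$, which is precisely what turns the argument $q$ into $c$; with that fixed, the limit on the right gives $(-1)^j c^j\alpha^{-j}q^{-j-j(j-1)/2}$, which combined with the existing $(-1)^j q^{-j(j+1)/2}$ yields $c^j\alpha^{-j}q^{-j^2-j}$ as stated.
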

Setting $b=0$ in (\ref{wweqn1}), we are led to the following proposition.
\begin{prop}\label{WWpp3} For any nonnegative integer $n,$ we have
\begin{align*}
&(-1)^n\frac{(\alpha q; q)_n }{(q; q)_n}q^{n(n+1)/2}
{_3\phi_2}\left({{q^{-n}, \alpha q^{n+1}, \alpha cd/q }\atop{\alpha c, \alpha d}}; q, 1\right)\label{wweqn1}\\
&=\sum_{j=0}^n (-1)^j\frac{(1-\alpha q^{2j}) (\alpha, q/c, q/d; q)_j}
{(1-\alpha)(q, \alpha c, \alpha d; q)_j}q^{j(j-3)/2}\(\alpha cd\)^{j}.
\end{align*}
\end{prop}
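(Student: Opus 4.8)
The plan is to obtain the Proposition by letting $b \to 0$ in the identity \reff{wweqn1}, exactly as the surrounding remark indicates. Both sides of \reff{wweqn1} are finite sums (the left-hand series terminates because of the factor $(q^{-n};q)_k$, and the right-hand sum runs over $0\le j\le n$), so the limit may be taken term by term. The only genuine work is to track, as $b\to 0$, the asymptotics of the $q$-shifted factorials whose argument grows like $q^r/b$, since $\lim_{b\to 0}(q^r/b;q)_m$ is where a stray sign or power of $q$ could creep in.

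First I would dispose of the scalar prefactor on the left. Writing $(q^2/b;q)_n=\prod_{k=0}^{n-1}(1-q^{k+2}/b)$, each factor behaves like $-q^{k+2}/b$, so $(q^2/b;q)_n \sim (-1)^n b^{-n} q^{n(n+3)/2}$ while $(\alpha b;q)_n\to 1$. Multiplying by $(b/q)^n=b^n q^{-n}$ cancels the $b^{-n}$ and leaves precisely $(-1)^n q^{n(n+1)/2}$, using $n(n+3)/2-n=n(n+1)/2$. Hence $\frac{(\alpha q,q^2/b;q)_n}{(q,\alpha b;q)_n}(b/q)^n \to (-1)^n \frac{(\alpha q;q)_n}{(q;q)_n}q^{n(n+1)/2}$, which is exactly the prefactor in the Proposition.

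Next I would treat the ${_4\phi_3}$. In its $k$-th term the sole $b$-dependence sits in the ratio $(q/b;q)_k/(q^2/b;q)_k$. The same computation gives $(q/b;q)_k\sim(-1)^k b^{-k} q^{k(k+1)/2}$, so this ratio tends to $q^{k(k+1)/2-k(k+3)/2}=q^{-k}$. The summand therefore becomes $\frac{(q^{-n},\alpha q^{n+1},\alpha cd/q;q)_k}{(q,\alpha c,\alpha d;q)_k}q^{-k}\cdot q^k$, the factor $q^{-k}$ cancelling the base argument $q^k$, so the ${_4\phi_3}$ collapses to the terminating ${_3\phi_2}\left({{q^{-n},\alpha q^{n+1},\alpha cd/q}\atop{\alpha c,\alpha d}};q,1\right)$, as required.

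Finally, on the right-hand side the $j$-th term contains $(q/b;q)_j$ in the numerator and a factor $b^j$ coming from $(\alpha bcd/q^2)^j$; since $(q/b;q)_j\cdot b^j\to (-1)^j q^{j(j+1)/2}$ and $(\alpha b;q)_j\to 1$, each term tends to $(-1)^j \frac{(1-\alpha q^{2j})(\alpha,q/c,q/d;q)_j}{(1-\alpha)(q,\alpha c,\alpha d;q)_j}q^{j(j+1)/2-2j}(\alpha cd)^j$, and $j(j+1)/2-2j=j(j-3)/2$ matches the stated right-hand side. Assembling the three limits completes the proof. I expect no obstacle beyond the careful bookkeeping of the exponents in the $b\to 0$ asymptotics of $(q^r/b;q)_m$; once those are pinned down the identity drops out with no further transformation.
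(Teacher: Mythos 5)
Your proposal is correct and follows exactly the paper's route: the paper obtains Proposition \ref{WWpp3} by setting $b=0$ in \reff{wweqn1}, which (since $q^2/b$ and $q/b$ appear) is precisely the limit $b\to 0$ that you carry out, and your exponent bookkeeping ($n(n+3)/2-n=n(n+1)/2$, the ratio $(q/b;q)_k/(q^2/b;q)_k\to q^{-k}$, and $j(j+1)/2-2j=j(j-3)/2$) checks out. The only difference is that you make explicit the term-by-term limit that the paper leaves implicit.
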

Putting $d=0$ in Proposition~\ref{WWpp3}, we obtain the following proposition.
\begin{prop}\label{WWpp4} For any nonnegative integer $n,$ we have
\begin{align*}
&(-1)^n\frac{(\alpha q; q)_n }{(q; q)_n}q^{n(n+1)/2}
{_2\phi_1}\left({{q^{-n}, \alpha q^{n+1} }\atop{\alpha c}}; q, 1\right)\\
&=\sum_{j=0}^n \frac{(1-\alpha q^{2j}) (\alpha, q/c; q)_j}
{(1-\alpha)(q, \alpha c; q)_j}q^{j^2-j}\(\alpha c\)^{j}.
\end{align*}
\end{prop}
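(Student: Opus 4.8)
The plan is to derive Proposition~\ref{WWpp4} as a direct specialization of Proposition~\ref{WWpp3}, exactly as the phrase ``Putting $d=0$'' suggests. First I would set $d=0$ on both sides of the identity in Proposition~\ref{WWpp3} and track how each piece degenerates. On the left-hand side, the ${_3\phi_2}$ has $\alpha cd/q$ as one of its upper parameters; when $d=0$ this parameter becomes $0$, so the factor $(\alpha cd/q; q)_k$ in each summand of the series collapses to $(0;q)_k=1$, while the lower parameter $\alpha d$ becomes $0$ as well, changing $(\alpha d; q)_k$ to $(0;q)_k=1$. The net effect is to remove one numerator and one denominator parameter, turning the ${_3\phi_2}$ into a ${_2\phi_1}$ with upper parameters $q^{-n}, \alpha q^{n+1}$ and lower parameter $\alpha c$, argument still $1$. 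The prefactor $(-1)^n (\alpha q;q)_n q^{n(n+1)/2}/(q;q)_n$ on the left is untouched by $d=0$, so it carries over verbatim.

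Next I would handle the right-hand sum. Each summand of Proposition~\ref{WWpp3} contains the factors $(q/d; q)_j$ in the numerator and $(\alpha d; q)_j$ in the denominator, together with the power $(\alpha cd)^j$. The subtle point — and the only place any genuine computation occurs — is that as $d\to 0$ the numerator factor $(q/d;q)_j$ blows up while $(\alpha cd)^j$ vanishes, and these two must be combined before taking the limit. Writing out $(q/d;q)_j=\prod_{i=0}^{j-1}(1-q^{i+1}/d)$ and pairing it with the $d^j$ from $(\alpha cd)^j$, I get
\[
(q/d;q)_j \, d^j=\prod_{i=0}^{j-1}(d-q^{i+1}) \xrightarrow{d\to 0} \prod_{i=0}^{j-1}(-q^{i+1})=(-1)^j q^{j(j+1)/2}.
\]
Meanwhile $(\alpha d;q)_j\to(0;q)_j=1$ in the denominator. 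Collecting the surviving exponents of $q$: the original summand carries $q^{j(j-3)/2}(\alpha c)^j$ (from the leftover factors after extracting the $d$-dependence), and multiplying by the limit $(-1)^j q^{j(j+1)/2}$ produces the total $q$-power $q^{j(j-3)/2+j(j+1)/2}=q^{j^2-j}$, together with $(\alpha c)^j$ and an overall sign. I would verify that the sign $(-1)^j$ already present in Proposition~\ref{WWpp3} cancels against the $(-1)^j$ emerging from the limit, leaving the clean positive summand $(\alpha c)^j q^{j^2-j}$ displayed in Proposition~\ref{WWpp4}.

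Finally I would assemble the two sides. The remaining factors $(1-\alpha q^{2j})(\alpha, q/c;q)_j / [(1-\alpha)(q,\alpha c;q)_j]$ are exactly the $d$-independent part of the general summand, so they pass through the limit unchanged and match the target. The main obstacle, such as it is, is purely bookkeeping: one must be careful to combine $(q/d;q)_j$ with the hidden $d^j$ inside $(\alpha cd)^j$ before letting $d\to 0$, and to recombine the two separate powers of $q$ (namely $q^{j(j-3)/2}$ and $q^{j(j+1)/2}$) correctly so that the exponent simplifies to $j^2-j$ and the signs cancel. No deep idea is needed beyond the standard limiting procedure for $q$-series with a vanishing parameter; the computation is routine once the $d$-dependent factors are grouped properly.
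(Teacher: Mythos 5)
Your proposal is correct and follows exactly the paper's route: the paper obtains Proposition~\ref{WWpp4} by setting $d=0$ in Proposition~\ref{WWpp3}, and your careful treatment of the limit $(q/d;q)_j\,d^j\to(-1)^jq^{j(j+1)/2}$, the sign cancellation, and the exponent bookkeeping $j(j-3)/2+j(j+1)/2=j^2-j$ all check out. Nothing further is needed.
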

Letting $d \to \infty$ in Proposition~\ref{WWpp3}, we obtain the following Proposition.
\begin{prop}\label{WWpp5} For any nonnegative integer $n,$ we have
\begin{align*}
&(-1)^n\frac{(\alpha q; q)_n }{(q; q)_n}q^{n(n+1)/2}
{_2\phi_1}\left({{q^{-n}, \alpha q^{n+1} }\atop{\alpha c}}; q, \frac{c}{q} \right)\\
&=\sum_{j=0}^n \frac{(1-\alpha q^{2j}) (\alpha, q/c; q)_j}
{(1-\alpha)(q, \alpha c; q)_j}\( \frac{c}{q}\)^{j}.
\end{align*}
\end{prop}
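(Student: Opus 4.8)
The plan is to obtain Proposition~\ref{WWpp5} as the limit $d\to\infty$ of Proposition~\ref{WWpp3}, exactly as indicated in the text. Since every series in sight terminates (the factor $(q^{-n};q)_k$ cuts the ${_3\phi_2}$ off at $k=n$, and the sum on the right runs over $0\le j\le n$), the limit may be taken term by term and no convergence question arises; the whole content of the proof is to track how the $d$-dependent factors behave as $d\to\infty$. The single asymptotic I will use repeatedly is that, for fixed $k$, $(\lambda d;q)_k=\prod_{i=0}^{k-1}(1-\lambda d q^i)\sim(-\lambda d)^k q^{k(k-1)/2}$ as $d\to\infty$, together with $(q/d;q)_k\to 1$.

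First I would treat the left-hand side. The prefactor $(-1)^n (\alpha q;q)_n q^{n(n+1)/2}/(q;q)_n$ is independent of $d$, so it suffices to pass to the limit inside the ${_3\phi_2}$. In its general term the only $d$-dependence sits in the ratio $(\alpha cd/q;q)_k/(\alpha d;q)_k$; applying the asymptotic to both numerator and denominator, the powers $q^{k(k-1)/2}$ cancel and the factors $(-\alpha d)^k$ cancel, leaving $(\alpha cd/q)^k/(\alpha d)^k=(c/q)^k$. Hence the ${_3\phi_2}$ tends to $\sum_{k=0}^n (q^{-n},\alpha q^{n+1};q)_k (c/q)^k/(q,\alpha c;q)_k$, which is precisely ${_2\phi_1}\!\left({{q^{-n},\alpha q^{n+1}}\atop{\alpha c}};q,c/q\right)$, producing the desired left-hand side.

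Next I would treat the right-hand side, where the limit must be taken inside the finite sum over $j$. For fixed $j$ the $d$-dependent portion of the summand is $(q/d;q)_j(\alpha cd)^j/(\alpha d;q)_j$. Using $(q/d;q)_j\to1$ and $(\alpha d;q)_j\sim(-\alpha d)^j q^{j(j-1)/2}$, this tends to $(\alpha cd)^j/\bigl((-\alpha d)^j q^{j(j-1)/2}\bigr)=(-1)^j c^j q^{-j(j-1)/2}$. The only real work is the bookkeeping that follows: multiplying this limit by the surviving factor $(-1)^j q^{j(j-3)/2}$ from Proposition~\ref{WWpp3}, the two sign factors combine to $(-1)^{2j}=1$ and the exponents add to $\tfrac12\bigl(j(j-3)-j(j-1)\bigr)=-j$, so the summand collapses to $(1-\alpha q^{2j})(\alpha,q/c;q)_j (c/q)^j/\bigl((1-\alpha)(q,\alpha c;q)_j\bigr)$. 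Summing over $0\le j\le n$ yields exactly the right-hand side of Proposition~\ref{WWpp5}. The main obstacle, such as it is, is purely this combinatorics of signs and powers of $q$; there is no analytic difficulty, since the termwise limit of a finite sum is automatic.
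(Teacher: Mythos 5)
Your proposal is correct and follows exactly the paper's route: the paper obtains Proposition~\ref{WWpp5} by letting $d\to\infty$ in Proposition~\ref{WWpp3}, and your termwise asymptotics $(\lambda d;q)_k\sim(-\lambda d)^k q^{k(k-1)/2}$ and the resulting exponent bookkeeping are the (unstated) details of that limit, all carried out correctly.
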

\section{Applications of Theorems~\ref{liunewthmb}, \ref{liunewthmc}, \ref{liunewthmd}, \ref{liunewthme}}
To discuss the applications of Theorems~\ref{liunewthmb}, \ref{liunewthmc}, \ref{liunewthmd}, \ref{liunewthme},
we first introduce some notations.  Ramanujan's theta functions $\phi(q)$ and $\psi(q)$ are defined as follows
\[
\phi(q)=\sum_{n=-\infty}^\infty q^{n^2} \quad \text{and} \quad
\psi(q)=\sum_{n=0}^\infty q^{n(n+1)/2}.
\]
The Gauss identities for  $\phi(q)$ and $\psi(q)$ (see, for example, \cite[p. 347]{Liu2010pacific}) are given by
\begin{equation}
\phi(q)=\prod_{n=1}^\infty \(\frac{1-q^n}{1+q^n}\)\quad \text{and}\quad
\psi(q)=\prod_{n=1}^\infty \(\frac{1-q^{2n}}{1-q^{2n-1}}\).
\label{R:eqn1}
\end{equation}
If $m\ge 1$ is an integer, we use $r_m(n)$ to denote the number of representations of $n$ as a sum of $m$ squares and
we also let $t_m(n)$ denote the number of representations of $n$ as a sum of $m$ triangular numbers.  It is easily seen
that $\phi^m(q)$ is the generating function of $r_m(n)$ and $\psi^m(q)$ is the generating function of $t_m(n).$
As a result, to obtain expressions for $r_m(n)$ and $t_m(n)$, it suffices to obtain expressions for
$\phi^m(q)$ and $\psi^m(q).$ Some important progress in the study of sums of squares have been made recently (see, for
example, \cite{ChanChua}, \cite{Liu2004}, \cite{Milne} and \cite{Ono}). In this section, we shall
use Theorems~\ref{liunewthmb}, \ref{liunewthmc}, \ref{liunewthmd}, \ref{liunewthme} to derive some unusual formulas
involving $\phi^m(q)$ and $\psi^m(q).$   For any integer $r,$
some general formulas for $(q; q)^r_\infty$ are also given.

 We also need the finite theta functions $S_n(q)$ and $T_n(q)$ which are defined as follows
\begin{equation}
S_n(q)=\sum_{j=-n}^n (-1)^j q^{-j^2} \quad \text{and}\quad T_n(q)=\sum_{j=0}^n q^{-j(j+1)/2}.
\label{m:eqn1}
\end{equation}

 Putting $\alpha=q$ and $c_1=c_2=\cdots=c_m=1$ in Theorem~\ref{liunewthmd}, we
 obtain the following proposition.
 \begin{prop}\label{liupp1} If $m$ is a nonnegative integer, then,  we have
 \[
 \prod_{j=1}^\infty (1-q^j)^{m+1}=\sum_{n=0}^\infty (-1)^n q^{n(n-1)/2}(1-q^{2n+1}){_{m+2}\phi_{m+1}}\left( {{q^{-n},  q^{n+1}, q, \cdots, q}
\atop{0, 0, \cdots, 0}}; q, q\right).
\]
 \end{prop}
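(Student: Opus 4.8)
The plan is to obtain Proposition~\ref{liupp1} as the direct specialization of Theorem~\ref{liunewthmd} at $\alpha=q$ and $c_1=c_2=\cdots=c_m=1$, followed by the cancellation of a common factor $1-q$. First I would substitute these values into the left-hand side of Theorem~\ref{liunewthmd}. Since $\alpha q=q^2$, the factor $(\alpha q;q)_\infty$ becomes $(q^2;q)_\infty=(q;q)_\infty/(1-q)$, while each of the $m$ factors $(\alpha c_j;q)_\infty$ collapses to $(\alpha;q)_\infty=(q;q)_\infty$. Hence the left-hand side equals $(q;q)_\infty^{m+1}/(1-q)$.

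Next I would simplify the summand on the right. With $\alpha=q$ one has $(\alpha;q)_n=(q;q)_n$ and $1-\alpha=1-q$, so the prefactor
\[
\frac{(1-\alpha q^{2n})(\alpha;q)_n(-1)^nq^{n(n-1)/2}}{(1-\alpha)(q;q)_n}
\]
reduces to $\dfrac{(1-q^{2n+1})(-1)^nq^{n(n-1)/2}}{1-q}$, the ratio $(q;q)_n/(q;q)_n$ being $1$. Simultaneously the upper parameters $\alpha q^n=q^{n+1}$ and $\alpha c_j=q$ turn the basic hypergeometric series into exactly ${_{m+2}\phi_{m+1}}\left({{q^{-n},q^{n+1},q,\cdots,q}\atop{0,0,\cdots,0}};q,q\right)$.

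Finally I would equate the two simplified sides and multiply through by $1-q$; the factor $1/(1-q)$ present on both sides cancels, and using $\prod_{j=1}^\infty(1-q^j)^{m+1}=(q;q)_\infty^{m+1}$ on the left yields the asserted identity. There is no serious obstacle here: the only point requiring a moment's care is recognizing $(q^2;q)_\infty=(q;q)_\infty/(1-q)$ so that the stray factor $1-q$ on the left matches the denominator $1-\alpha$ arising on the right. Once this matching is observed, the entire verification is mechanical.
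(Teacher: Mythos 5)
Your proof is correct and is precisely the paper's own argument: the proposition is stated as the specialization $\alpha=q$, $c_1=\cdots=c_m=1$ of Theorem~\ref{liunewthmd}, and your bookkeeping of the factor $1-q$ via $(q^2;q)_\infty=(q;q)_\infty/(1-q)$ is exactly the simplification the paper leaves implicit.
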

 Letting $\alpha \to 1$ and $c_1=c_2=\cdots=c_m=q$ in Theorem~\ref{liunewthmd}, we
 obtain the following proposition.
 \begin{prop}\label{liupp2} If $m$ is a nonnegative integer, then,  we have
 \[
 \prod_{j=1}^\infty (1-q^j)^{m+1}=1+\sum_{n=1}^\infty (-1)^n (1+q^n)q^{n(n-1)/2}{_{m+2}\phi_{m+1}}\left( {{q^{-n},  q^{n}, q, \cdots, q}
\atop{0, 0, \cdots, 0}}; q, q\right).
\]
 \end{prop}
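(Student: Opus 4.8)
The plan is to obtain Proposition~\ref{liupp2} from Theorem~\ref{liunewthmd} by a single specialization followed by the limit $\alpha\to 1$. First I would put $c_1=c_2=\cdots=c_m=q$ in Theorem~\ref{liunewthmd}. On the left-hand side this collapses the product, since each of the $m+1$ factors $(\alpha q; q)_\infty$ and $(\alpha c_j; q)_\infty$ becomes $(\alpha q; q)_\infty$; on the right-hand side every upper parameter $\alpha c_j$ becomes $\alpha q$. This yields the intermediate identity
\[
(\alpha q; q)_\infty^{m+1}=\sum_{n=0}^\infty \frac{(1-\alpha q^{2n})(\alpha; q)_n(-1)^n q^{n(n-1)/2}}{(1-\alpha)(q; q)_n}\,{_{m+2}\phi_{m+1}}\left({{q^{-n},\alpha q^n,\alpha q,\cdots,\alpha q}\atop{0,\cdots,0}};q,q\right).
\]
Since $(\alpha q; q)_\infty^{m+1}\to\prod_{j=1}^\infty(1-q^j)^{m+1}$ as $\alpha\to1$, this is already the desired left-hand side, and the whole task reduces to evaluating the limit of the series term by term.

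The point that requires care is the factor $1/(1-\alpha)$, which is only apparently singular: in each term it is cancelled by a vanishing factor in the numerator, but the source of the cancellation differs between $n=0$ and $n\ge1$. For $n=0$ the cancellation comes from $1-\alpha q^{2n}=1-\alpha$, so the coefficient is identically $1$; moreover the inner ${_{m+2}\phi_{m+1}}$ collapses to its $k=0$ term because $q^{-n}=1$ kills every $k\ge1$, whence the entire $n=0$ contribution equals $1$ for all $\alpha$ and produces the leading $1$ of the final formula. For $n\ge1$ I would instead write $(\alpha; q)_n=(1-\alpha)(\alpha q; q)_{n-1}$ to remove $1/(1-\alpha)$, so that the $n$-th coefficient becomes
\[
\frac{(1-\alpha q^{2n})(\alpha q; q)_{n-1}(-1)^n q^{n(n-1)/2}}{(q; q)_n},
\]
which is regular at $\alpha=1$. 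Letting $\alpha\to1$ and using $(q; q)_{n-1}/(q; q)_n=1/(1-q^n)$ together with $(1-q^{2n})/(1-q^n)=1+q^n$ simplifies this coefficient to $(-1)^n(1+q^n)q^{n(n-1)/2}$, while the hypergeometric factor tends to the one with upper parameters $q^{-n},q^n,q,\cdots,q$ and lower parameters $0,\cdots,0$. Collecting the $n=0$ term and the $n\ge1$ terms reproduces exactly the right-hand side of the proposition.

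I expect this $\alpha\to1$ limit, together with the termwise passage to the limit inside the sum, to be the only real obstacle; the algebra above is routine. The interchange of the limit with the infinite summation is legitimate because, after the cancellation of $1/(1-\alpha)$, each summand is a polynomial in $\alpha$ and hence continuous at $\alpha=1$, and the outer series converges suitably near $\alpha=1$, the strong cancellation inside the terminating inner sum keeping its value small relative to the factor $q^{n(n-1)/2}$. Thus the proposition follows by a specialization and a careful limit, with no input beyond Theorem~\ref{liunewthmd}.
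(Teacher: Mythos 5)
Your proposal is correct and follows exactly the paper's route: the paper obtains Proposition~\ref{liupp2} precisely by setting $c_1=\cdots=c_m=q$ in Theorem~\ref{liunewthmd} and letting $\alpha\to1$, stating this in one line without details. Your careful treatment of the $1/(1-\alpha)$ cancellation (via $1-\alpha q^{2n}$ at $n=0$ and via $(\alpha;q)_n=(1-\alpha)(\alpha q;q)_{n-1}$ for $n\ge1$) correctly fills in the computation the paper leaves implicit.
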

 Putting $\alpha=q$ and $b=b_1=b_2=\cdots=b_m=1$ in Theorem~\ref{liunewthme}, we
 obtain the following proposition.
 \begin{prop}\label{liupp3} If $m$ is a nonnegative integer, then,  we have
 \[
 \prod_{j=1}^\infty \frac{1}{(1-q^j)^{m}}=\sum_{n=0}^\infty (-1)^n (1-q^{2n+1})q^{n(n-1)/2}{_{m+2}\phi_{m+1}}\left( {{q^{-n},  q^{n+1}, 0, \cdots, 0}
\atop{q, q, \cdots, q}}; q, q\right).
\]
 \end{prop}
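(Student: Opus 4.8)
The plan is to obtain Proposition~\ref{liupp3} as a direct specialization of Theorem~\ref{liunewthme}, taking $\alpha=q$ and $b=b_1=\cdots=b_m=1$. First I would confirm that the hypothesis of Theorem~\ref{liunewthme}, namely $\max\{|\alpha b|,|\alpha b_1|,\dots,|\alpha b_m|\}<1$, reduces under this choice to $|q|<1$, which holds throughout the paper. Then I would evaluate the left-hand side of Theorem~\ref{liunewthme}: the numerator $(\alpha q;q)_\infty$ becomes $(q^2;q)_\infty$, while the denominator $(\alpha b;q)_\infty\prod_{j=1}^m(\alpha b_j;q)_\infty$ becomes $(q;q)_\infty^{m+1}$. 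Using the elementary product identity $(q^2;q)_\infty=(q;q)_\infty/(1-q)$, the left-hand side collapses to $1/\big((1-q)(q;q)_\infty^m\big)$.

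Next I would simplify the summand on the right-hand side. Under $\alpha=q$ the factor $(\alpha;q)_n=(q;q)_n$ cancels against the $(q;q)_n$ in the denominator, we have $1-\alpha q^{2n}=1-q^{2n+1}$, $\alpha q^n=q^{n+1}$, and $1-\alpha=1-q$; hence each term of the outer series acquires a common factor $1/(1-q)$ and its coefficient reduces to $(-1)^n(1-q^{2n+1})q^{n(n-1)/2}/(1-q)$. Simultaneously the lower parameters $\alpha b,\alpha b_1,\dots,\alpha b_m$ all become $q$, while the upper parameters remain $q^{-n},q^{n+1},0,\dots,0$, so the inner series is precisely the ${_{m+2}\phi_{m+1}}$ displayed in the statement. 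Equating the two simplified sides and cancelling the common factor $1/(1-q)$ then yields the identity, since $\prod_{j=1}^\infty(1-q^j)^{-m}=(q;q)_\infty^{-m}$.

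The only point that requires any care — and the step I expect to be the crux of an otherwise mechanical substitution — is the bookkeeping of the factor $1-q$. It enters exactly once on each side: on the left through the rearrangement $(q^2;q)_\infty=(q;q)_\infty/(1-q)$, and on the right through the denominator $1-\alpha$ of the expansion coefficient. The identity emerges cleanly only after one verifies that these two occurrences coincide and cancel. No genuine analytic difficulty arises: the numerator parameter $q^{-n}$ truncates each inner ${_{m+2}\phi_{m+1}}$ to a finite sum, and the outer series converges for $|q|<1$, so term-by-term specialization is justified.
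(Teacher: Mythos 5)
Your proposal is correct and is exactly the paper's own derivation: the paper obtains Proposition~\ref{liupp3} by the single sentence ``Putting $\alpha=q$ and $b=b_1=\cdots=b_m=1$ in Theorem~\ref{liunewthme}.'' Your careful tracking of the factor $1-q$ on both sides is the right bookkeeping and matches what the paper leaves implicit.
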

 Letting $\alpha \to 1$ and $b=b_1=b_2=\cdots=b_m=q$ in Theorem~\ref{liunewthme}, we
 obtain the following proposition.
 \begin{prop}\label{newliup4} If $m$ is a nonnegative integer, then,  we have
  \[
 \prod_{j=1}^\infty \frac{1}{(1-q^j)^{m}}=\sum_{n=0}^\infty (-1)^n(1+q^n) q^{n(n-1)/2}{_{m+2}\phi_{m+1}}\left( {{q^{-n},  q^{n}, 0, \cdots, 0}
\atop{q, q, \cdots, q}}; q, q\right).
\]
 \end{prop}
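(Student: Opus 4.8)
The plan is to obtain Proposition~\ref{newliup4} from Theorem~\ref{liunewthme} by the specialization $b=b_1=\cdots=b_m=q$ followed by the limit $\alpha\to 1$, in exact analogy with the way Proposition~\ref{liupp2} is deduced from Theorem~\ref{liunewthmd}. First I would check that the hypothesis of Theorem~\ref{liunewthme}, namely $\max\{|\alpha b|,|\alpha b_1|,\dots,|\alpha b_m|\}<1$, reduces here to $|\alpha q|<1$; since $q<1$ this holds throughout a neighbourhood of $\alpha=1$, so both the substitution and the passage to the limit are legitimate.

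Next I would simplify the left-hand side. Under this choice the denominator $(\alpha b;q)_\infty\prod_{j=1}^m(\alpha b_j;q)_\infty$ becomes $(\alpha q;q)_\infty^{m+1}$, so the left side collapses to $(\alpha q;q)_\infty/(\alpha q;q)_\infty^{m+1}=1/(\alpha q;q)_\infty^{m}$, whose limit as $\alpha\to1$ is $1/(q;q)_\infty^{m}=\prod_{j=1}^\infty(1-q^j)^{-m}$. On the right-hand side every bottom parameter $\alpha b,\alpha b_1,\dots,\alpha b_m$ of the ${}_{m+2}\phi_{m+1}$ equals $\alpha q$, and in the limit becomes $q$, producing the inner series displayed in the proposition.

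The only nonroutine step, and the one I expect to be the main obstacle, is the limit of the outer coefficient $\dfrac{(1-\alpha q^{2n})(\alpha;q)_n}{(1-\alpha)(q;q)_n}$, which is of the indeterminate form $0/0$ at $\alpha=1$ for each $n\ge1$. I would resolve it by the factorization $(\alpha;q)_n=(1-\alpha)(\alpha q;q)_{n-1}$, which cancels the troublesome $(1-\alpha)$; letting $\alpha\to1$ then gives $\dfrac{(1-q^{2n})(q;q)_{n-1}}{(q;q)_n}=\dfrac{1-q^{2n}}{1-q^{n}}=1+q^{n}$. For $n=0$ no limit is needed: the coefficient equals $1$ identically, and the inner ${}_{m+2}\phi_{m+1}$ truncates to its $k=0$ term because $q^{-n}=1$, so the $n=0$ contribution is simply $1$. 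Collecting the $n=0$ contribution together with the factors $(-1)^n(1+q^n)q^{n(n-1)/2}$ for $n\ge1$ yields the right-hand side of Proposition~\ref{newliup4}.

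To make this rigorous I would finally justify interchanging $\lim_{\alpha\to1}$ with the sum over $n$: each summand is a rational function of $\alpha$ that is continuous at $\alpha=1$, and standard estimates, using that the inner series terminates and that the full series on the right converges for $0<q<1$, show that the $n$-tail is uniformly small in a neighbourhood of $\alpha=1$. Dominated convergence then permits the term-by-term limit and completes the proof.
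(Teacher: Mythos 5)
Your argument is exactly the paper's (essentially unwritten) proof: specialize $b=b_1=\cdots=b_m=q$ in Theorem~\ref{liunewthme}, let $\alpha\to 1$, and resolve the $0/0$ coefficient via $(\alpha;q)_n=(1-\alpha)(\alpha q;q)_{n-1}$ to get the factor $(1+q^n)$ for $n\ge 1$. Note that your (correct) observation that the $n=0$ term contributes $1$ actually exposes a misprint in the proposition as displayed, whose $n=0$ summand is $(1+q^0)=2$; the identity should read $1+\sum_{n\ge 1}(-1)^n(1+q^n)q^{n(n-1)/2}\,{}_{m+2}\phi_{m+1}(\cdots)$, in the same form as Proposition~\ref{liupp2}.
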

If we put $\alpha=1, a=0,  c_1=c_2=\cdots=c_m=q$ and $b=b_1=b_2=\cdots=b_m=-q$ in Theorem \ref{liunewthmb},
we are led to the following identity:
 \begin{align}
&\prod_{n=1}^\infty \(\frac{1-q^n}{1+q^n}\)^{m+1}
 \label{R:eqn2}\\
 &=1+\sum_{n=1}^\infty (-1)^n (1+q^n) q^{n(n-1)/2} {_{m+2}\phi_{m+1}}\left( {{q^{-n},  q^n, q, \cdots,  q}
\atop{-q, -q, \cdots, -q}}; q, q\right).\nonumber
 \end{align}
 Using the $q$-Chu-Vandermonde summation (see, for example, \cite[Eq. (1.5.3)]{Gas+Rah}), we easily deduce that
 \begin{equation}
 {_2\phi_1}\left( {{q^{-n},  q^n}
\atop{-q}}; q, q\right)=\frac{(-q^{1-n}; q)_n q^{n^2}}{(-q; q)_n}
=\frac{2q^{n(n+1)/2}}{1+q^n}.
\label{R:eqn3}
 \end{equation}
 Taking $m=0$ in (\ref{R:eqn2}) and then using the above in the resulting equation, we
 arrive at the first Gauss identity in (\ref{R:eqn1}). Using this identity, we can rewrite
 (\ref{R:eqn2}) in the following Proposition.
 \begin{prop} \label{newliupp5} If $m$ is a nonnegative integer, then,  we have the identity
  \begin{align*}
&\(\sum_{n=-\infty}^\infty (-1)^n q^{n^2}\)^{m+1}\\
 &=1+\sum_{n=1}^\infty (-1)^n (1+q^n) q^{n(n-1)/2} {_{m+2}\phi_{m+1}}\left( {{q^{-n},  q^n, q, \cdots,  q}
\atop{-q, -q, \cdots, -q}}; q, q\right).
 \end{align*}
 \end{prop}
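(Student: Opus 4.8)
The plan is to obtain Proposition~\ref{newliupp5} directly from the product formula \reff{R:eqn2} by trading the infinite product for a theta series. First I would observe that the left-hand side of \reff{R:eqn2} is precisely the $(m+1)$-th power of the Gauss product $\prod_{n=1}^\infty (1-q^n)/(1+q^n)$, so that the entire content of the proposition amounts to replacing this product by a single two-sided theta sum. In other words, nothing new about the hypergeometric side needs to be proved; the right-hand sides of \reff{R:eqn2} and of the proposition are literally identical, and only the left-hand sides must be reconciled.

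The key ingredient is therefore the first Gauss identity, which the text establishes as the $m=0$ specialization of \reff{R:eqn2}. Concretely, setting $m=0$ collapses the ${}_{m+2}\phi_{m+1}$ on the right to the ${}_2\phi_1$ appearing in \reff{R:eqn3}, and the $q$-Chu--Vandermonde evaluation \reff{R:eqn3} gives
\[
{}_2\phi_1\left({{q^{-n},\, q^n}\atop{-q}}; q, q\right)=\frac{2q^{n(n+1)/2}}{1+q^n},
\]
so that the summand weight $(-1)^n(1+q^n)q^{n(n-1)/2}$ combines with this value, the factor $1+q^n$ cancelling, to leave $2(-1)^n q^{n^2}$. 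Summing over $n\ge 1$ and restoring the $n=0$ term $1$ produces $\sum_{n=-\infty}^\infty(-1)^n q^{n^2}$, whence
\[
\prod_{n=1}^\infty \frac{1-q^n}{1+q^n}=\sum_{n=-\infty}^\infty (-1)^n q^{n^2}.
\]

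With this identity in hand, the proof is a one-line substitution: raising both sides to the $(m+1)$-th power gives
\[
\left(\sum_{n=-\infty}^\infty (-1)^n q^{n^2}\right)^{m+1}
=\prod_{n=1}^\infty \left(\frac{1-q^n}{1+q^n}\right)^{m+1},
\]
and the right-hand side of this is exactly the left-hand side of \reff{R:eqn2}. Substituting the series expansion \reff{R:eqn2} for it delivers the claimed formula, completing the proof.

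Because every component is already available from the earlier development, there is no genuine analytic obstacle here; the one point that demands care is bookkeeping with signs. The Gauss product must be recorded in its \emph{alternating} form $\sum_{n=-\infty}^\infty(-1)^n q^{n^2}$ rather than as the unsigned $\phi(q)=\sum_{n=-\infty}^\infty q^{n^2}$, since the series generated by \reff{R:eqn2} carries the factor $(-1)^n$. Pinning down this sign, and checking that the cancellation of $(1+q^n)q^{n(n-1)/2}$ against $2q^{n(n+1)/2}/(1+q^n)$ really removes the spurious $1+q^n$, is the single place where a careless computation could introduce an error.
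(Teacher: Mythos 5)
Your proposal is correct and follows exactly the paper's route: the paper also obtains the first Gauss identity by combining the $m=0$ case of \reff{R:eqn2} with the $q$-Chu--Vandermonde evaluation \reff{R:eqn3}, and then uses it to rewrite the infinite product on the left of \reff{R:eqn2} as the $(m+1)$-th power of the alternating theta series. Your sign/cancellation check of $(1+q^n)q^{n(n-1)/2}\cdot 2q^{n(n+1)/2}/(1+q^n)=2q^{n^2}$ is the right detail to verify, and it goes through.
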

 If we set $\alpha=1, a=b=b_1=\cdots=b_m=-q$ and $c_1=c_2=\cdots=c_m=q$ in Theorem \ref{liunewthmb} and
 using the first Gauss identity in (\ref{R:eqn1}), we obtain the following proposition.
 \begin{prop} \label{newliupp6} If $m$ is a nonnegative integer, then,  we have the identity
 \begin{align*}
 \(\sum_{n=-\infty}^\infty (-1)^n q^{n^2}\)^{2m+2}=1+2\sum_{n=1}^\infty (-1)^n {_{m+2}\phi_{m+1}}\left( {{q^{-n},  q^n, q, \cdots,  q}
\atop{-q, -q, \cdots, -q}}; q, q\right).
 \end{align*}
 \end{prop}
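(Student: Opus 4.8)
The plan is to obtain Proposition~\ref{newliupp6} as a limiting specialization of Theorem~\ref{liunewthmb}, exactly as the surrounding text indicates: put $a=b=b_1=\cdots=b_m=-q$ and $c_1=\cdots=c_m=q$, and let $\alpha\to1$. The one point requiring care is that the general term on the right of Theorem~\ref{liunewthmb} carries the factor $1/(1-\alpha)$, so I cannot simply set $\alpha=1$; instead I would perform the substitution in $a,b,b_j,c_j$ while keeping $\alpha$ free, and only afterwards pass to the limit $\alpha\to1$. The convergence hypothesis of Theorem~\ref{liunewthmb} holds throughout, since $|a|=|q|<1$ and $|ac_j/q|=|q|<1$ for $0<q<1$.

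For the left-hand side I would note that each of the $m+1$ product blocks collapses to the same thing: with the chosen values one has $\alpha ab/q=q$, $\alpha ab_j/q=q$, $\alpha c_j=q$ and $\alpha a=\alpha b=\alpha b_j=-q$, $\alpha ac_j/q=-q$, so after taking $\alpha\to1$ (where the left-hand side is continuous) every block equals $\frac{(q,q;q)_\infty}{(-q,-q;q)_\infty}=\bigl((q;q)_\infty/(-q;q)_\infty\bigr)^2$. Hence the whole left-hand side is $\bigl((q;q)_\infty/(-q;q)_\infty\bigr)^{2m+2}$. Applying the first Gauss identity in (\ref{R:eqn1}), namely $\frac{(q;q)_\infty}{(-q;q)_\infty}=\prod_{n\ge1}\frac{1-q^n}{1+q^n}=\sum_{n=-\infty}^\infty(-1)^n q^{n^2}$, converts this into $\bigl(\sum_{n=-\infty}^\infty(-1)^n q^{n^2}\bigr)^{2m+2}$, which is the desired left-hand side.

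For the right-hand side I would first identify the base series. Since $a=-q$ gives $q/a=-1$, $\alpha a=-\alpha q$, $\alpha b=\alpha b_j=-\alpha q$ and $\alpha c_j=\alpha q$, the ${}_{m+2}\phi_{m+1}$ has upper parameters $q^{-n},\alpha q^n,\alpha q,\dots,\alpha q$ and lower parameters $-\alpha q,\dots,-\alpha q$; being terminating it is a finite sum with no pole at $\alpha=1$, so it tends to the series displayed in the proposition. The real work is the prefactor. For $n=0$ both prefactor and series equal $1$, yielding the leading term $1$. For $n\ge1$ the prefactor is
\[
\frac{(1-\alpha q^{2n})(\alpha;q)_n(-1;q)_n(-1)^n}{(1-\alpha)(q;q)_n(-\alpha q;q)_n},
\]
and this is the only place the $0/0$ indeterminacy occurs. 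I would resolve it by writing $(\alpha;q)_n=(1-\alpha)(\alpha q;q)_{n-1}$ to cancel $1-\alpha$, then let $\alpha\to1$ using $(-1;q)_n=2(-q;q)_{n-1}$ and $1-q^{2n}=(1-q^n)(1+q^n)$; the cancellations $\frac{(q;q)_{n-1}}{(q;q)_n}=\frac{1}{1-q^n}$ and $\frac{(-q;q)_{n-1}}{(-q;q)_n}=\frac{1}{1+q^n}$ then collapse the whole prefactor to $2(-1)^n$. Combining the two sides gives precisely the stated identity.

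The main obstacle is the bookkeeping of the $\alpha\to1$ limit: isolating the cancelling factor $(1-\alpha)$ from $(\alpha;q)_n$ and justifying that the limit may be taken inside the infinite sum. This last point is legitimate because, for fixed $q$ with $0<q<1$, each summand is a rational function of $\alpha$ regular at $\alpha=1$ and the series converges uniformly in a neighborhood of $\alpha=1$; everything else is routine simplification.
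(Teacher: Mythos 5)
Your proposal is correct and follows essentially the same route as the paper, which simply sets $\alpha=1$, $a=b=b_1=\cdots=b_m=-q$, $c_1=\cdots=c_m=q$ in Theorem~\ref{liunewthmb} and invokes the first Gauss identity in (\ref{R:eqn1}); your careful handling of the $\alpha\to1$ limit (cancelling $1-\alpha$ against $(\alpha;q)_n$ to get the prefactor $2(-1)^n$ for $n\ge1$) is exactly the computation the paper leaves implicit.
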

 Letting $m=0$ in the above proposition and then using (\ref{R:eqn3}), we obtain the well-known identity
 (see, for example, \cite[Eq. (8. 11. 3)]{Gas+Rah})
 \begin{equation}
 \(\sum_{n=-\infty}^\infty (-1)^n q^{n^2}\)^{2}=1+4\sum_{n=1}^\infty (-1)^n\frac{q^{n(n+1)/2}}{1+q^n}.
 \label{R:eqn4}
 \end{equation}
 Using the $q$-Pfaff-Saalsch\"utz summation formula (see, for example, \cite[Eq. (1.7.2)]{Gas+Rah}, \cite{Liu2013}),
 we have
 \begin{equation}
 {_{3}\phi_{2}}\left( {{q^{-n},  q^n, q}
\atop{-q, -q}}; q, q\right)=\frac{(-1; q)^2_n q^n}{(-q; q)^2_n}=\frac{4q^n}{(1+q^n)^2}.
\label{R:eqn5}
 \end{equation}
 Setting $m=1$ in Proposition \ref{newliupp6} and then using (\ref{R:eqn5}), we obtain the well-known identity
 (see, for example, \cite[Eq. (8. 11. 6)]{Gas+Rah})
 \begin{equation}
 \(\sum_{n=-\infty}^\infty (-1)^n q^{n^2}\)^{4}=1+8\sum_{n=1}^\infty (-1)^n\frac{q^n}{(1+q^n)^2}.
 \label{R:eqn6}
 \end{equation}

  \begin{rem}\label{aremark}
 \rm  For $m \ge 2$, there are no known summation formulas for the series $_{m+2}\phi_{m+1}$ in Proposition~ \ref{newliupp6},
 so we can't simplify the formula in Proposition~ \ref{newliupp6} at present.  Finding the summation formulas for these series
 are highly desirable.
  \end{rem}

 Choosing $\alpha=1, a=b=-q, b_1=b_2=\cdots=b_m=0$ and $c_1=c_2=\cdots=c_m=q$ in Theorem \ref{liunewthmb},
  we obtain the following proposition.
  \begin{prop} \label{newliupp7} If $m$ is a nonnegative integer, then,  we have the identity
 \begin{align*}
 \(\sum_{n=-\infty}^\infty (-1)^n q^{n^2}\)^{m+2}&=\prod_{n=1}^\infty \(\frac{1-q^n}{1+q^n}\)^{m+2}
 \\
 &=1+2\sum_{n=1}^\infty (-1)^n {_{m+2}\phi_{m+1}}\left( {{q^{-n},  q^n, q, \cdots,  q}
\atop{-q, 0, \cdots, 0}}; q, q\right).
 \end{align*}
 \end{prop}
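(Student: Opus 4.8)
The plan is to specialize Theorem~\ref{liunewthmb} exactly as prescribed, taking $\alpha=1$, $a=b=-q$, $b_1=\cdots=b_m=0$ and $c_1=\cdots=c_m=q$, and then to simplify both sides separately; the only genuine bookkeeping will be the passage $\alpha\to1$ on the right. First I would dispose of the left-hand side. Since $ab/q=(-q)(-q)/q=q$ and $ac_j/q=(-q)(q)/q=-q$, and since $(0;q)_\infty=1$, the leading quotient $(\alpha q,\alpha ab/q;q)_\infty/(\alpha a,\alpha b;q)_\infty$ becomes $(q,q;q)_\infty/(-q,-q;q)_\infty=\left(\frac{(q;q)_\infty}{(-q;q)_\infty}\right)^{2}$, while each of the $m$ factors $(\alpha ab_j/q,\alpha c_j;q)_\infty/(\alpha ac_j/q,\alpha b_j;q)_\infty$ collapses to $\frac{(q;q)_\infty}{(-q;q)_\infty}$. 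Multiplying, the whole left member equals $\left(\frac{(q;q)_\infty}{(-q;q)_\infty}\right)^{m+2}$. The first Gauss identity in \reff{R:eqn1}, in the form used here $\sum_{n=-\infty}^\infty(-1)^nq^{n^2}=\prod_{n=1}^\infty\frac{1-q^n}{1+q^n}=\frac{(q;q)_\infty}{(-q;q)_\infty}$, then rewrites this as $\left(\prod_{n=1}^\infty\frac{1-q^n}{1+q^n}\right)^{m+2}=\left(\sum_{n=-\infty}^\infty(-1)^nq^{n^2}\right)^{m+2}$, which is exactly the two forms claimed on the left.

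The step I expect to be the main obstacle is the $\alpha\to1$ limit of the prefactor on the right, which for $n\ge1$ is of indeterminate form $0/0$ because both $(\alpha;q)_n$ and $1-\alpha$ vanish at $\alpha=1$. The key is to cancel the singular factors \emph{before} taking the limit, by writing $(\alpha;q)_n/(1-\alpha)=(\alpha q;q)_{n-1}$, which tends to $(q;q)_{n-1}$ as $\alpha\to1$. Setting $a=-q$ then gives $q/a=-1$, $a/q=-1$ and $\alpha a=-q$, so the coefficient $\frac{(1-\alpha q^{2n})(\alpha,q/a;q)_n(a/q)^n}{(1-\alpha)(q,\alpha a;q)_n}$ becomes $(1-q^{2n})(q;q)_{n-1}\frac{(-1;q)_n(-1)^n}{(q;q)_n(-q;q)_n}$ in the limit. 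Using $(q;q)_{n-1}/(q;q)_n=1/(1-q^n)$ turns the first three factors into $1+q^n$, and the identities $(-1;q)_n=2(-q;q)_{n-1}$ and $(-q;q)_n=(1+q^n)(-q;q)_{n-1}$ give $(-1;q)_n/(-q;q)_n=2/(1+q^n)$; hence I expect the entire coefficient to collapse to the clean value $2(-1)^n$.

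The remaining assembly is routine. The $n=0$ term causes no difficulty: its prefactor is $(1-\alpha)/(1-\alpha)=1$ and, since its top entry is $q^{-0}=1$, the basic hypergeometric series terminates at its constant term $1$, so this term contributes an isolated $1$. With $\alpha=1$ the upper parameters of the series are $q^{-n},q^n,\alpha c_1,\dots,\alpha c_m=q^{-n},q^n,q,\dots,q$ and the lower parameters are $\alpha b,\alpha b_1,\dots,\alpha b_m=-q,0,\dots,0$. Collecting the $n=0$ contribution together with the factor $2(-1)^n$ for $n\ge1$ yields
\[
1+2\sum_{n=1}^\infty(-1)^n\,{}_{m+2}\phi_{m+1}\left({{q^{-n},\,q^n,\,q,\cdots,q}\atop{-q,\,0,\cdots,0}};q,q\right),
\]
which is precisely the right-hand side of the proposition, completing the argument.
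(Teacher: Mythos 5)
Your proposal is correct and is exactly the paper's route: the paper simply states that Proposition~\ref{newliupp7} follows by choosing $\alpha=1$, $a=b=-q$, $b_1=\cdots=b_m=0$, $c_1=\cdots=c_m=q$ in Theorem~\ref{liunewthmb} together with the first Gauss identity in \reff{R:eqn1}, and your computation (including the limit $\alpha\to1$ via $(\alpha;q)_n/(1-\alpha)=(\alpha q;q)_{n-1}$ and the collapse of the coefficient to $2(-1)^n$) supplies precisely the details the paper omits.
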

This proposition includes Andrews' identity for sums of three squares \cite[Eq. (5.16)]{Andrews86}
(see also \cite[Eq. (7.7)]{Liu}) as a special cases.
\begin{prop} \label{newliupp8} {(\rm Andrews)}  There holds the identity
\begin{equation*}
\(\sum_{n=-\infty}^\infty (-1)^n q^{n^2}\)^{3}
=1+4\sum_{n=1}^\infty (-1)^n \frac{q^n}{1+q^n}
-2\sum_{n=1}^\infty \frac{1-q^n}{1+q^n} \sum_{|j|<n}(-1)^j q^{n^2-j^2}.
\end{equation*}
\end{prop}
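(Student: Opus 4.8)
The plan is to specialize the general identity of Proposition~\ref{newliupp7} to $m=1$ and then to evaluate explicitly the single basic hypergeometric series that survives. Putting $m=1$ gives
\[
\left(\sum_{n=-\infty}^\infty (-1)^n q^{n^2}\right)^{3}=1+2\sum_{n=1}^\infty (-1)^n\,{_3\phi_2}\left({{q^{-n},\,q^n,\,q}\atop{-q,\,0}};q,q\right),
\]
so everything is reduced to understanding the terminating series $F_n:={_3\phi_2}\left({{q^{-n},q^n,q}\atop{-q,0}};q,q\right)$. Since here $1+s-r=1+2-3=0$, the quadratic factor is trivial, the upper parameter $q$ cancels the $(q;q)_k$ arising from the definition, and $(0;q)_k=1$; hence $F_n$ collapses to the finite sum
\[
F_n=\sum_{k=0}^n \frac{(q^{-n};q)_k\,(q^n;q)_k}{(-q;q)_k}\,q^k .
\]

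The heart of the argument, and the step I expect to be the main obstacle, is to prove the closed evaluation
\[
F_n=\frac{2q^n}{1+q^n}-(-1)^n\,\frac{1-q^n}{1+q^n}\,q^{n^2}S_{n-1}(q),
\]
where $S_{n-1}(q)=\sum_{|j|<n}(-1)^j q^{-j^2}$ is the finite theta function of \reff{m:eqn1}. (One checks this directly at $n=0,1$, which pins down the normalization and the signs.) The difficulty is that, unlike the series in \reff{R:eqn3} and \reff{R:eqn5}, $F_n$ is \emph{not} very-well-poised: the pair $q^{-n},q^n$ fails the balancing relation needed to invoke Watson's $q$-analog of Whipple's theorem, so none of Propositions~\ref{WWpp1}--\ref{WWpp5} applies verbatim. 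The emergence of the finite theta function $S_{n-1}(q)$ reflects a genuine Hecke-type phenomenon rather than a product evaluation, which is precisely why this case is delicate.

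I would establish the closed form by induction on $n$. Writing $P_n:=(1+q^n)F_n$, the target becomes $P_n=2q^n-(-1)^n(1-q^n)q^{n^2}S_{n-1}(q)$, and the elementary recurrence $S_{n-1}(q)=S_{n-2}(q)+2(-1)^{n-1}q^{-(n-1)^2}$ controls the right-hand side completely; it then suffices to produce a matching first-order recurrence for $P_n$, which I would extract from the contiguous (three-term) relation of the terminating ${_3\phi_2}$ in $n$ after clearing the $(-q;q)_k$ denominators by means of $(q^{-n};q)_k=(-1)^k q^{\binom{k}{2}-nk}(q;q)_n/(q;q)_{n-k}$. An alternative, more direct route is to reverse the order of summation $k\mapsto n-k$: the reflected factors $(q^{-n};q)_{n-k}$ and $(q^n;q)_{n-k}$ release the overall power $q^{n^2}$ and, after simplification, expose the partial theta sum $\sum_{|j|<n}(-1)^j q^{-j^2}$ intrinsically. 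With the evaluation of $F_n$ in hand the proof concludes mechanically: multiplying by $2(-1)^n$ and summing over $n\ge1$, the first term yields $4\sum_{n\ge1}(-1)^n q^n/(1+q^n)$, while in the second term the two sign factors collapse via $(-1)^n(-1)^n=1$, leaving $-2\sum_{n\ge1}\frac{1-q^n}{1+q^n}\sum_{|j|<n}(-1)^j q^{n^2-j^2}$; together with the leading $1$ this is exactly the stated identity of Andrews.
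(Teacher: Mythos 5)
Your overall route coincides with the paper's: specialize Proposition~\ref{newliupp7} to $m=1$ and evaluate $F_n={_3\phi_2}\left({{q^{-n},\,q^n,\,q}\atop{-q,\,0}};q,q\right)$ in closed form. Your claimed evaluation is in fact identical to the paper's \reff{R:eqn8} (since $-(-1)^n=(-1)^{n-1}$ and $q^{n^2}S_{n-1}(q)=\sum_{|j|<n}(-1)^jq^{n^2-j^2}$), the check at $n=0,1$ works, and the final bookkeeping that assembles the three pieces of Andrews' identity is correct. You also correctly diagnose that Propositions~\ref{WWpp1}--\ref{WWpp5} do not apply to this non-well-poised series, which is precisely why the paper reaches outside Section~2 at this point.

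The gap is that the decisive step --- the closed evaluation of $F_n$ --- is announced but never proved: you name two strategies and execute neither. For the induction you would need an explicit inhomogeneous recurrence $P_n=A_nP_{n-1}+B_n$ matching the one satisfied by $2q^n-(-1)^n(1-q^n)q^{n^2}S_{n-1}(q)$; but shifting $n$ moves \emph{two} parameters ($q^{-n}$ and $q^n$) simultaneously, so this is not a standard contiguous relation, and you have not exhibited the recurrence (your own phrasing wavers between ``first-order'' and ``three-term'', i.e.\ second-order). The reversal $k\mapsto n-k$ does release the factor $q^{n^2}$, but it only converts $F_n$ into another terminating basic hypergeometric sum; the partial theta function $S_{n-1}(q)$ does not appear without a further finite identity. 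The paper closes exactly this hole by citing two results from \cite{Liu2013}: Lemma~4.1 there gives $F_n=q^{n^2}\frac{(q;q)_n}{(-q;q)_n}\sum_{j=0}^n\frac{(-1;q)_j q^{j(1-n)}}{(q;q)_j}$, and Eq.~(6.1) there converts the inner sum into $\sum_{|j|<n}(-1)^jq^{-j^2}$; these two lemmas carry the real weight of the proof. Until you either derive your recurrence explicitly or import an identity of this kind, your argument is incomplete at its crux.
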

\begin{proof}
Taking $m=1$ in proposition \ref{newliupp7}, we immediately conclude that
\begin{equation}
\(\sum_{n=-\infty}^\infty (-1)^n q^{n^2}\)^{3}=
1+2\sum_{n=1}^\infty (-1)^n {_{3}\phi_{2}}\left( {{q^{-n},  q^n, q}
\atop{-q, 0}}; q, q\right).
\label{newR:eqn7}
\end{equation}
If we set $a=1$ and $c=-1$ in \cite[Lemma~4.1]{Liu2013}, we easily deduce that
\begin{equation}
{_{3}\phi_{2}}\left( {{q^{-n},  q^n, q}
\atop{-q, 0}}; q, q\right)=q^{n^2} \frac{(q; q)_n}{(-q; q)_n}
\sum_{j=0}^n \frac{(-1; q)_j  q^{j(1-n)}}{(q; q)_j}.
\label{R:eqn7}
\end{equation}
Using \cite[Eq. (6.1)]{Liu2013}, we easily find that the
inner summation of the right-hand side of the above equation equals
\begin{align*}
&\frac{(-1; q)_n q^{n(1-n)}}{(q; q)_n}
+\sum_{j=0}^{n-1}\frac{(-1; q)_j  q^{j(1-n)}}{(q; q)_j}\\
&=\frac{(-1; q)_n q^{n(1-n)}}{(q; q)_n}+(-1)^{n-1} \frac{(-q; q)_{n-1}}{(q; q)_{n-1}} \sum_{|j|<n}(-1)^j q^{-j^2}.
\end{align*}
Substituting the above equation into (\ref{R:eqn7}) and simplifying , we find that
\begin{align}
&{_{3}\phi_{2}}\left( {{q^{-n},  q^n, q}
\atop{-q, 0}}; q, q\right)\label{R:eqn8}\\
&=\frac{2q^n}{1+q^n}
+(-1)^{n-1} \frac{1-q^n}{1+q^n} \sum_{|j|<n}(-1)^j q^{n^2-j^2}.\nonumber
\end{align}
Combining (\ref{newR:eqn7}) and (\ref{R:eqn8}) we complete the proof of the proposition.
\end{proof}
Replacing $q$ by $q^2$ in Theorem \ref{liunewthmb} and then setting $\alpha=1, a=0,  b=b_1=b_2=\cdots=b_m=q, c_1=c_2=\cdots=c_m=q^2$
in the resulting equation, we find the following identity.
 \begin{align*}
 &\prod_{n=1}^\infty \(\frac{1-q^{2n}}{1-q^{2n-1}}\)^{m+1}\\
 &=1+\sum_{n=1}^\infty (-1)^n (1+q^{2n}) q^{n^2-n} {_{m+2}\phi_{m+1}}\left( {{q^{-2n},  q^{2n}, q^2, \cdots,  q^2}
\atop{q, q, \cdots, q}}; q^2, q^2\right).\nonumber
 \end{align*}
Setting $m=0$ in the above equation and then using the $q$-Chu-Vandermonde summation, we find the
second Gauss identity in (\ref{R:eqn1}). Combining the above equation and  the
second Gauss identity in (\ref{R:eqn1}), we obtain the following proposition.
\begin{prop}\label{newliupp9} There holds the identity
\begin{align*}
 & \(\sum_{n=0}^\infty q^{n(n+1)/2}\)^{m+1}\\
 &=1+\sum_{n=1}^\infty (-1)^n (1+q^{2n}) q^{n^2-n} {_{m+2}\phi_{m+1}}\left( {{q^{-2n},  q^{2n}, q^2, \cdots,  q^2}
\atop{q, q, \cdots, q}}; q^2, q^2\right).
 \end{align*}
\end{prop}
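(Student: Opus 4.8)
The plan is to realize Proposition~\ref{newliupp9} as a limiting specialization of Theorem~\ref{liunewthmb}. First I would replace $q$ by $q^2$ everywhere in Theorem~\ref{liunewthmb}, so that all $q$-shifted factorials and both the base and argument of the ${}_{m+2}\phi_{m+1}$ are taken to base $q^2$. Splitting the multiple factorials, the summand then carries the factor $\frac{(1-\alpha q^{4n})(\alpha;q^2)_n}{(1-\alpha)(q^2;q^2)_n}\cdot\frac{(q^2/a;q^2)_n(a/q^2)^n}{(\alpha a;q^2)_n}$ and the argument of the series becomes $q^2$. I would then specialize $\alpha=1$, $a=0$, $b=b_1=\cdots=b_m=q$, and $c_1=\cdots=c_m=q^2$, taking the limits $\alpha\to1$ and $a\to0$ where they are needed; since the $\alpha$- and $a$-dependence factor apart, these limits are independent.

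On the product side the substitution is immediate. With $a=0$ every factor containing $a$ equals $(0;q^2)_\infty=1$, so the leading quotient reduces to $(q^2;q^2)_\infty/(q;q^2)_\infty$ and each of the $m$ factors in $\prod_{j=1}^m$ contributes the same quotient; hence the left-hand side collapses to $\bigl((q^2;q^2)_\infty/(q;q^2)_\infty\bigr)^{m+1}$. Invoking the second Gauss identity in \reff{R:eqn1}, which gives $(q^2;q^2)_\infty/(q;q^2)_\infty=\psi(q)=\sum_{n=0}^\infty q^{n(n+1)/2}$, this is exactly $\bigl(\sum_{n=0}^\infty q^{n(n+1)/2}\bigr)^{m+1}$, the desired left-hand side.

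The part requiring care, and the main obstacle, is the coefficient, where each limit produces an indeterminate $0/0$ form. For $\alpha\to1$ I would isolate the $n=0$ term, whose coefficient tends to $1$ and supplies the leading summand; for $n\ge1$ the troublesome $(1-\alpha)$ cancels the first factor of $(\alpha;q^2)_n$, giving $(\alpha;q^2)_n/(1-\alpha)\to(q^2;q^2)_{n-1}$ and $1-\alpha q^{4n}\to1-q^{4n}$, after which $(q^2;q^2)_{n-1}/(q^2;q^2)_n=1/(1-q^{2n})$ together with $1-q^{4n}=(1-q^{2n})(1+q^{2n})$ produces the factor $1+q^{2n}$. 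For $a\to0$ I would group $(q^2/a;q^2)_n$ with $(a/q^2)^n$ before passing to the limit, writing $(q^2/a;q^2)_n(a/q^2)^n=q^{-2n}\prod_{k=0}^{n-1}(a-q^{2k+2})\to(-1)^nq^{n^2-n}$, while $(\alpha a;q^2)_n\to1$. Assembling the pieces gives the coefficient $(-1)^n(1+q^{2n})q^{n^2-n}$, and because the top parameter $q^{-2n}$ terminates the ${}_{m+2}\phi_{m+1}$, the $\alpha\to1$ limit may be taken termwise inside the series, turning $\alpha q^{2n}$ into $q^{2n}$. This matches the right-hand side and finishes the proof; everything past the two limit evaluations is routine substitution. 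As a consistency check, setting $m=0$ in the intermediate product identity and applying the $q$-Chu--Vandermonde summation independently recovers the second Gauss identity used above.
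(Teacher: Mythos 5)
Your proposal is correct and follows the same route as the paper: replace $q$ by $q^2$ in Theorem~\ref{liunewthmb}, specialize $\alpha=1$, $a=0$, $b=b_1=\cdots=b_m=q$, $c_1=\cdots=c_m=q^2$, and convert the resulting infinite product to $\psi(q)^{m+1}$ via the second Gauss identity in \reff{R:eqn1}. Your careful treatment of the two $0/0$ limits and the final consistency check (recovering the Gauss identity from the $m=0$ case via $q$-Chu--Vandermonde) match what the paper does, only written out in more detail.
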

\begin{prop}\label{newliupp10}If $m$ is a nonnegative integer, then,  we have the formula
 \begin{align*}
&\(\sum_{n=0}^\infty q^{n(n+1)/2}\)^{m+2}\\
 &=\sum_{n=0}^\infty \frac{(1+q^{2n+1})q^{-n}}{1-q} {_{m+2}\phi_{m+1}}\left( {{q^{-2n},  q^{2n+2}, q^2, \cdots,  q^2}
\atop{q^3, 0, \cdots, 0}}; q^2, q^2\right).
 \end{align*}
\end{prop}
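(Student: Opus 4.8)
The plan is to derive this from Theorem~\ref{liunewthmb} by the same device used for Proposition~\ref{newliupp9}, but this time keeping the prefactor $\frac{(\alpha q,\alpha ab/q;q)_\infty}{(\alpha a,\alpha b;q)_\infty}$ alive (rather than killing it with $a=0$) so as to manufacture one extra power of $\psi(q)$. Concretely, I would first replace $q$ by $q^2$ throughout Theorem~\ref{liunewthmb} and then specialize $\alpha=q^2$, $a=b=q$, $c_1=\cdots=c_m=1$ and $b_1=\cdots=b_m=0$. With $\alpha=q^2$ the upper entry $\alpha q^{2n}$ becomes $q^{2n+2}$ and each $\alpha c_j$ becomes $q^2$, while the lower entries are $\alpha b=q^3$ and $\alpha b_j=0$; thus the $_{m+2}\phi_{m+1}$ on the right is exactly the one in the statement. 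The convergence hypothesis holds for $0<q<1$, since after the substitution the relevant quantities are $|\alpha a|=|\alpha b|=|q^3|$, $|\alpha b_j|=0$ and $|\alpha a c_j/q^2|=|q|$, all less than $1$.

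Next I would simplify the two sides. For the left-hand side the infinite products telescope: with these parameters the prefactor becomes $\frac{(q^4,q^2;q^2)_\infty}{(q^3,q^3;q^2)_\infty}$ and each of the $m$ factors in the finite product collapses to $\frac{(q^2;q^2)_\infty}{(q;q^2)_\infty}$. Using $(q^4;q^2)_\infty=(q^2;q^2)_\infty/(1-q^2)$ and $(q^3;q^2)_\infty=(q;q^2)_\infty/(1-q)$ together with the second Gauss identity in \reff{R:eqn1}, namely $\psi(q)=(q^2;q^2)_\infty/(q;q^2)_\infty$, the entire left-hand side becomes $\psi(q)^{m+2}\,\frac{1-q}{1+q}$. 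For the $n$th coefficient I would substitute $\alpha=q^2$, $a=q$ into $\frac{(1-\alpha q^{4n})(\alpha,q^2/a;q^2)_n(a/q^2)^n}{(1-\alpha)(q^2,\alpha a;q^2)_n}$; the factors $(q^2;q^2)_n$ cancel, the ratio $(q;q^2)_n/(q^3;q^2)_n$ telescopes to $(1-q)/(1-q^{2n+1})$, and after using $1-q^{4n+2}=(1-q^{2n+1})(1+q^{2n+1})$ and $1-q^2=(1-q)(1+q)$ this coefficient reduces to $\frac{(1+q^{2n+1})q^{-n}}{1+q}$.

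Combining these, Theorem~\ref{liunewthmb} yields
\begin{equation*}
\psi(q)^{m+2}\,\frac{1-q}{1+q}=\sum_{n=0}^\infty \frac{(1+q^{2n+1})q^{-n}}{1+q}\,{_{m+2}\phi_{m+1}}\left({{q^{-2n},q^{2n+2},q^2,\cdots,q^2}\atop{q^3,0,\cdots,0}};q^2,q^2\right),
\end{equation*}
and multiplying both sides by $\frac{1+q}{1-q}$ gives exactly the claimed formula. The only delicate point is this final rescaling: the stray factor $\frac{1-q}{1+q}$ produced by the product telescoping must cancel the factor $\frac{1}{1+q}$ hidden in the coefficient so that the denominator $1-q$ of the statement emerges. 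I expect the main bookkeeping effort—and the step most prone to a sign or power slip—to be the simultaneous simplification of the prefactor and the $n$th coefficient, so I would carry out the two telescoping computations in parallel and cross-check the $m=0$, $n=0$ term against $\psi(q)^2$ before asserting the general identity.
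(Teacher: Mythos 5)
Your proposal is correct and follows essentially the same route as the paper: the paper also specializes Theorem~\ref{liunewthmb} with $b_1=\cdots=b_m=0$ and $c_j=q/\alpha$ (which becomes $c_j=1$ once $q\mapsto q^2$ and $\alpha=q^2$), then sets $a=b=q$ and invokes the second Gauss identity in \reff{R:eqn1}. Your bookkeeping of the factors $\frac{1-q}{1+q}$ and $\frac{1}{1+q}$ checks out, so the argument is sound.
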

\begin{proof}
Letting $b_1=b_2=\cdots=b_m=0$ and $c_1=c_2=\cdots=c_m=q/\alpha$ in Theorem \ref{liunewthmb}, we
deduce that
 \begin{align*}
&  \frac{(\alpha, \alpha ab/q; q)_\infty (q; q)_\infty^m}
{(\alpha a, \alpha b; q)_\infty (a; q)_\infty^m} \\
&=\sum_{n=0}^\infty  \frac{(1-\alpha q^{2n}) (\alpha, q/a; q)_n (a/q)^n}
{(q, \alpha a; q)_n}
{_{m+2}\phi_{m+1}}\left( {{q^{-n}, \alpha q^n, q, \cdots, q}
\atop{\alpha b, 0, \cdots, 0}}; q, q\right).
 \end{align*}
Replacing $q$ by $q^2$ in the above equation,  setting $a=b=q, \alpha=q^2$ in the
resulting equation, using the second Gauss identity in (\ref{R:eqn1}),
we complete the proof of Proposition \ref {newliupp10}.
\end{proof}
Setting $m=0$ in Proposition \ref {newliupp10} and then using the $q$-Chu-Vandermonde summation, we find
that
\begin{equation}
\(\sum_{n=0}^\infty q^{n(n+1)/2}\)^2=\sum_{n=0}^\infty \frac{(-1)^n(1+q^{2n+1})q^{n^2+n}}{1-q^{2n+1}}.
\label{R:eqn9}
\end{equation}
Taking $m=1$ in Proposition \ref {newliupp10} and then using \cite[Lemma~4.1]{Liu2013}, we can find
Andrews' identity for sums of three triangular numbers \cite[Eq. (5.17)]{Andrews86}, \cite[Theorem~8]{Liu}:
\begin{equation}
\(\sum_{n=0}^\infty q^{n(n+1)/2}\)^3
=\sum_{n=0}^\infty \sum_{j=0}^{2n} \frac{(1+q^{2n+1})q^{2n^2+2n-j(j+1)/2}}{1-q^{2n+1}}.
\label{R:eqn10}
\end{equation}

We end this section by proving the following theorem using Theorems~\ref{liunewthmb} and the Sears $_4\phi_3$ transformation.
\begin{thm}\label{Liutripthm} If $\max\{|\alpha a|, |\alpha b|, |\alpha ac/q|\}<1,$ then,  we have
\begin{align*}
&\frac{(q\alpha, \alpha c, \alpha ab/q; q)_\infty}{(\alpha a, \alpha b, \alpha ac/q; q)_\infty}\\
&=\sum_{n=0}^\infty  \frac{(1-\alpha q^{2n})(\alpha, q/a, q/b; q)_n (-\alpha ab)^nq^{n(n-3)/2}}{(1-\alpha)(q, \alpha a, \alpha b; q)_n}
{_2\phi_1}\({{q^{-n}, \alpha q^n}\atop{q/b}}; q, \frac{qc}{b}\).
\end{align*}
\end{thm}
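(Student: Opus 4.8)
The plan is to specialize Theorem~\ref{liunewthmb} and then to convert the resulting inner ${}_{3}\phi_{2}$ into a ${}_{2}\phi_{1}$ by means of a terminating Sears transformation. First I would put $m=1$, $b_1=0$ and $c_1=c$ in Theorem~\ref{liunewthmb}. Since $(0;q)_\infty=1$, the extra product on the left collapses to $(\alpha c;q)_\infty/(\alpha ac/q;q)_\infty$, so the left-hand side becomes exactly
\[
\frac{(\alpha q,\alpha c,\alpha ab/q;q)_\infty}{(\alpha a,\alpha b,\alpha ac/q;q)_\infty}.
\]
On the right-hand side, because $(0;q)_k=1$, the inner series reduces to ${}_{3}\phi_{2}\left({{q^{-n},\alpha q^n,\alpha c}\atop{\alpha b,0}};q,q\right)$, and the summand carries the prefactor $\frac{(1-\alpha q^{2n})(\alpha,q/a;q)_n (a/q)^n}{(1-\alpha)(q,\alpha a;q)_n}$; the hypothesis $\max\{|\alpha a|,|\alpha b|,|\alpha ac/q|\}<1$ is precisely the specialization of the convergence condition there.

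The heart of the argument is the transformation of this ${}_{3}\phi_{2}$. I would apply the Sears transformation for a terminating ${}_{3}\phi_{2}$ (a limiting case of the Sears ${}_{4}\phi_{3}$ transformation; see, e.g., \cite{Gas+Rah})
\[
{}_{3}\phi_{2}\left({{q^{-n},b,c}\atop{d,e}};q,q\right)=\frac{(e/c;q)_n}{(e;q)_n}c^n\,{}_{3}\phi_{2}\left({{q^{-n},c,d/b}\atop{d,cq^{1-n}/e}};q,\frac{bq}{e}\right),
\]
with the choice $b=\alpha c$, $c=\alpha q^n$, $d=0$, $e=\alpha b$. The key point is that routing the zero into the \emph{lower} slot $d$ forces $d/b=0$ and collapses the transformed ${}_{3}\phi_{2}$ to a ${}_{2}\phi_{1}$; since $cq^{1-n}/e=q/b$ and $bq/e=qc/b$, this yields
\[
{}_{3}\phi_{2}\left({{q^{-n},\alpha q^n,\alpha c}\atop{\alpha b,0}};q,q\right)=\frac{(bq^{-n};q)_n}{(\alpha b;q)_n}(\alpha q^n)^n\,{}_{2}\phi_{1}\left({{q^{-n},\alpha q^n}\atop{q/b}};q,\frac{qc}{b}\right).
\]

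It then remains to reconcile the prefactors. Using the reflection identity $(x;q)_n=(-x)^n q^{n(n-1)/2}(q^{1-n}/x;q)_n$ with $x=bq^{-n}$ gives $(bq^{-n};q)_n(\alpha q^n)^n=(-\alpha b)^n q^{n(n-1)/2}(q/b;q)_n$. Multiplying by the surviving factor $(a/q)^n$ from the first step and combining the powers of $q$ via $(a/q)^n(-\alpha b)^n q^{n(n-1)/2}=(-\alpha ab)^n q^{n(n-3)/2}$, the summand becomes exactly
\[
\frac{(1-\alpha q^{2n})(\alpha,q/a,q/b;q)_n(-\alpha ab)^n q^{n(n-3)/2}}{(1-\alpha)(q,\alpha a,\alpha b;q)_n}\,{}_{2}\phi_{1}\left({{q^{-n},\alpha q^n}\atop{q/b}};q,\frac{qc}{b}\right),
\]
which is the asserted right-hand side. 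I expect the main obstacle to be the correct parameter matching in the Sears transformation — in particular, sending the zero to $d$ rather than to $e$ (the latter would only produce a ${}_{3}\phi_{1}$) and then carrying out the reflection-identity bookkeeping that turns the somewhat opaque factor $(bq^{-n};q)_n(\alpha q^n)^n$ into the clean form $(-\alpha b)^n q^{n(n-1)/2}(q/b;q)_n$ matching the statement.
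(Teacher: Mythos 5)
Your proposal is correct and follows essentially the same route as the paper: both specialize Theorem~\ref{liunewthmb} at $m=1$ with $b_1=0$, $c_1=c$ and then convert the inner ${}_3\phi_2$ into a ${}_2\phi_1$ via the (limiting case of the) Sears transformation, your III.13-form being exactly the paper's transformation (\ref{R:eqn12}) after relabeling. The only cosmetic difference is that the paper keeps $b_1$ general until after the transformation, whereas you set $b_1=0$ at the outset; the prefactor bookkeeping $(bq^{-n};q)_n(\alpha q^n)^n=(-\alpha b)^n q^{n(n-1)/2}(q/b;q)_n$ and the resulting exponent $q^{n(n-3)/2}$ check out.
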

\begin{proof}
We start with the case $m=1$ of  Theorems~\ref{liunewthmb}, which states
\begin{align}
&  \frac{(\alpha q, \alpha ab/q, \alpha a b_1/q, \alpha c_1 ; q)_\infty}
{(\alpha a, \alpha b, \alpha a c_1/q, \alpha b_1 ; q)_\infty} \label{R:eqn11}\\
&=\sum_{n=0}^\infty  \frac{(1-\alpha q^{2n}) (\alpha, q/a; q)_n (a/q)^n}
{(1-\alpha)(q, \alpha a; q)_n}
{_{3}\phi_{2}}\left( {{q^{-n}, \alpha q^n, \alpha c_1}
\atop{\alpha b, \alpha b_1}}; q, q\right).\nonumber
 \end{align}
The Sears $_4\phi_3$ transformation  (see, for example,  \cite[p. 71]{Gas+Rah}) can be restated as follows
\begin{align*}
&{_4\phi_3} \left({{q^{-n}, \alpha q^n, \beta, \gamma} \atop {c, d, q\alpha \beta \gamma/cd}} ;  q, q \right)\\
&=\frac{(q\alpha/c, cd/\beta \gamma; q)_n}{(c, q\alpha \beta \gamma; q)_n} \left(\frac{\beta \gamma}{d}\right)^n
{_4\phi_3} \left({{q^{-n}, \alpha q^n, d/\beta, d/\gamma} \atop {d, dc/\beta \gamma,  q\alpha/c}} ;  q, q \right).
\end{align*}
Setting $\gamma=0$ in the above equation,  we immediately  deduce that
\begin{align}
&{_3\phi_2} \left({{q^{-n}, \alpha q^n, \beta} \atop {c, d}} ;  q, q \right)
\label{R:eqn12}\\
&=(-c)^n q^{n(n-1)/2} \frac{(q\alpha/c; q)_n}{(c; q)_n}
{_3\phi_2} \left({{q^{-n}, \alpha q^n, d/\beta} \atop {d,  q\alpha/c}} ;  q, \frac{q\beta}{c} \right).
\nonumber
\end{align}
Applying this transformation formula to the $_3\phi_2$ series on the right-hand side of (\ref{R:eqn11}),
we conclude that
\begin{align*}
&  \frac{(\alpha q, \alpha ab/q, \alpha a b_1/q, \alpha c_1 ; q)_\infty}
{(\alpha a, \alpha b, \alpha a c_1/q, \alpha b_1 ; q)_\infty} \\
&=\sum_{n=0}^\infty  \frac{(1-\alpha q^{2n}) (\alpha, q/a, q/b; q)_n (-\alpha ab)^n q^{n(n-3)/2}}
{(1-\alpha)(q, \alpha a, \alpha b; q)_n}
{_3\phi_2}\left( {{q^{-n}, \alpha q^n, b_1/c_1}
\atop{\alpha b_1, q/b}}; q, \frac{qc_1}{b}\right).
 \end{align*}
 Putting $b_1=0$ in the above equation and then  replacing $c_1$ by $c,$ we complete the proof of
 Theorem~\ref{Liutripthm}.
\end{proof}
\begin{rem}
\rm By taking $c_1=q/\alpha, b_1=0$ in (\ref{R:eqn11}) and then using \cite[Lemma~4.1]{Liu2013}, we can obtain
the following identity of Andrews \cite[Theorem~5]{Andrews86} (see also \cite[Theorem~1.4]{Liu2013}).
\begin{thm}\label{andrewsthm} For $\max \{|a|, |\alpha a|, |\alpha b|\}<1,$ we have
\begin{align*}
&\frac{(q, \alpha q, \alpha ab/q; q)_\infty}{(\alpha a, \alpha b, a; q)_\infty}\\
&=\sum_{n=0}^\infty \frac{(1-\alpha q^{2n})(\alpha, q/a; q)_n (\alpha a)^n q^{n^2-n}}
{(1-\alpha)(\alpha a, \alpha b; q)_n} \sum_{j=0}^n \frac{(\alpha b/q; q)_j \alpha^{-j}q^{j(1-n)}}{(q; q)_j}.
\end{align*}
\end{thm}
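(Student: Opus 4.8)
The plan is to specialize the $m=1$ instance of Theorem~\ref{liunewthmb}, recorded as equation \reff{R:eqn11}, by choosing the two free parameters $c_1$ and $b_1$ so that both the product side and the basic hypergeometric series on the right collapse into the shapes appearing in Theorem~\ref{andrewsthm}. Concretely, I would set $c_1=q/\alpha$ and $b_1=0$ in \reff{R:eqn11} and then simplify.

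First I would treat the left-hand side. With $b_1=0$ the factors $(\alpha a b_1/q;q)_\infty$ and $(\alpha b_1;q)_\infty$ both reduce to $(0;q)_\infty=1$, while $c_1=q/\alpha$ gives $\alpha c_1=q$ and $\alpha a c_1/q=a$, so that $(\alpha c_1;q)_\infty=(q;q)_\infty$ and $(\alpha a c_1/q;q)_\infty=(a;q)_\infty$. After reordering the surviving factors, the product on the left of \reff{R:eqn11} becomes $\frac{(q,\alpha q,\alpha ab/q;q)_\infty}{(\alpha a,\alpha b,a;q)_\infty}$, which is exactly the left-hand side of Theorem~\ref{andrewsthm}. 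The same substitution reduces the convergence hypothesis of Theorem~\ref{liunewthmb} (which involves $|\alpha b_1|$ and $|\alpha a c_1/q|$) to $\max\{|a|,|\alpha a|,|\alpha b|\}<1$, precisely the stated condition.

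Next I would handle the series side. Under $c_1=q/\alpha$, $b_1=0$, the inner ${}_{3}\phi_{2}$ in \reff{R:eqn11} becomes ${}_{3}\phi_{2}(q^{-n},\alpha q^n,q;\alpha b,0;q,q)$, a terminating series with a vanishing lower parameter. The key step is to rewrite this series using \cite[Lemma~4.1]{Liu2013}, which recasts such a ${}_{3}\phi_{2}$ as a prefactor times a single sum, namely ${}_{3}\phi_{2}(q^{-n},\alpha q^n,q;\alpha b,0;q,q)=\alpha^n q^{n^2}\frac{(q;q)_n}{(\alpha b;q)_n}\sum_{j=0}^n\frac{(\alpha b/q;q)_j\alpha^{-j}q^{j(1-n)}}{(q;q)_j}$. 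This is the same transformation already applied, in the case giving equation \reff{R:eqn7}, so it is legitimate to invoke it here.

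Finally I would merge this prefactor into the outer summand of \reff{R:eqn11}. The factor $(q;q)_n$ from the prefactor cancels the $(q;q)_n$ in the outer denominator, leaving the denominator $(\alpha a,\alpha b;q)_n$, and the monomial parts combine as $\alpha^n(a/q)^n q^{n^2}=(\alpha a)^n q^{n^2-n}$. Hence the coefficient of the surviving inner sum becomes $\frac{(1-\alpha q^{2n})(\alpha,q/a;q)_n(\alpha a)^n q^{n^2-n}}{(1-\alpha)(\alpha a,\alpha b;q)_n}$, which is exactly the summand of Theorem~\ref{andrewsthm}. The only genuinely non-routine ingredient is the reduction of the zero-base ${}_{3}\phi_{2}$ supplied by \cite[Lemma~4.1]{Liu2013}; once that identity is in hand, everything else is careful bookkeeping of $q$-shifted factorials and of the exponents of $\alpha$, $a$, and $q$.
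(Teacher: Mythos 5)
Your proposal is correct and coincides with the paper's own derivation: the paper obtains Theorem \ref{andrewsthm} precisely by setting $c_1=q/\alpha$, $b_1=0$ in \reff{R:eqn11} and then applying \cite[Lemma~4.1]{Liu2013} to the resulting $_3\phi_2$ with vanishing lower parameter. Your bookkeeping of the infinite products, the convergence condition, and the combination $\alpha^n (a/q)^n q^{n^2}=(\alpha a)^n q^{n^2-n}$ all check out.
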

When $b=c,$  Theorem~\ref{Liutripthm} reduces to the well-known identity
\[
\sum_{n=0}^\infty \frac{(1-\alpha q^{2n})(\alpha, q/a; q)_n (\alpha a)^n q^{n(n-1)}}
{(q, \alpha a; q)_n}=\frac{(\alpha; q)_\infty}{(\alpha a; q)_\infty}.
\]
\end{rem}
Using  Theorem~\ref{Liutripthm}, we can prove the following proposition.
\begin{prop}\label{Liutrippa} For $|a|<1,$ we have
\[
\frac{(q; q)_\infty^2 (-a; q)_\infty}{(a; q)^2_\infty(-q; q)_\infty}
=\sum_{n=0}^\infty \sum_{j=-n}^n (-1)^{n+j} (1-q^{2n+1}) q^{n^2-j^2} \frac{(q/a; q)_n a^n}{(a; q)_{n+1}}.
\]
\end{prop}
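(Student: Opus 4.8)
The plan is to obtain the proposition from a single specialization of Theorem~\ref{Liutripthm}, namely $\alpha=q$, $b=-1$, $c=1$. With these choices the convergence hypothesis $\max\{|\alpha a|,|\alpha b|,|\alpha ac/q|\}<1$ reads $\max\{q|a|,q,|a|\}<1$, which holds under the stated assumption $|a|<1$. Since $q\alpha=q^2$, $\alpha c=q$, $\alpha ab/q=-a$, $\alpha a=qa$, $\alpha b=-q$ and $\alpha ac/q=a$, the left-hand side of Theorem~\ref{Liutripthm} becomes
\[
\frac{(q^2,q,-a;q)_\infty}{(qa,-q,a;q)_\infty}=\frac{1-a}{1-q}\cdot\frac{(q;q)_\infty^2(-a;q)_\infty}{(a;q)_\infty^2(-q;q)_\infty},
\]
where I used $(q^2;q)_\infty=(q;q)_\infty/(1-q)$ and $(qa;q)_\infty=(a;q)_\infty/(1-a)$. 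Thus the left-hand side is exactly $\tfrac{1-a}{1-q}$ times the product appearing in the proposition.

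On the right-hand side the same substitutions collapse the summand: using $1-\alpha q^{2n}=1-q^{2n+1}$, the cancellations $(\alpha;q)_n/(q;q)_n=1$ and $(q/b;q)_n/(\alpha b;q)_n=(-q;q)_n/(-q;q)_n=1$, together with $(-\alpha ab)^nq^{n(n-3)/2}=(qa)^nq^{n(n-3)/2}=a^nq^{(n^2-n)/2}$, the coefficient reduces to
\[
\frac{(1-q^{2n+1})(q/a;q)_n\,a^n\,q^{(n^2-n)/2}}{(1-q)(qa;q)_n},
\]
while the inner series becomes the terminating ${}_2\phi_1$ with lower parameter and argument both equal to $-q$ (since $q/b=qc/b=-q$). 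Everything then rests on the evaluation
\[
{_2\phi_1}\left({{q^{-n},q^{n+1}}\atop{-q}};q,-q\right)=(-1)^n q^{n(n+1)/2}\sum_{j=-n}^{n}(-1)^j q^{-j^2}=(-1)^n q^{n(n+1)/2}S_n(q),
\]
with $S_n(q)$ as in \reff{m:eqn1}; a quick check gives $1$ for $n=0$ and $2-q$ for $n=1$ on both sides.

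Granting this lemma, the proof closes mechanically: substituting it and combining $q^{(n^2-n)/2}\cdot q^{n(n+1)/2}=q^{n^2}$ turns the right-hand side of Theorem~\ref{Liutripthm} into
\[
\frac{1}{1-q}\sum_{n=0}^\infty\sum_{j=-n}^{n}(-1)^{n+j}(1-q^{2n+1})q^{n^2-j^2}\frac{(q/a;q)_n\,a^n}{(qa;q)_n},
\]
and equating this with $\tfrac{1-a}{1-q}$ times the target product, cancelling $1/(1-q)$ and absorbing $1/(1-a)$ through $(a;q)_{n+1}=(1-a)(qa;q)_n$, produces exactly the double sum of the proposition. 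The one genuine obstacle is therefore the lemma, and I would prove it just as Proposition~\ref{newliupp8} is proved. Reversing the Sears step \reff{R:eqn12} used to derive Theorem~\ref{Liutripthm} shows that the ${}_2\phi_1$ equals $q^{-n(n+1)/2}$ times
\[
{_3\phi_2}\left({{q^{-n},q^{n+1},q}\atop{-q,0}};q,q\right)=\sum_{k=0}^n\frac{(q^{-n},q^{n+1};q)_k}{(-q;q)_k}q^k,
\]
and this one-sided sum is precisely the analogue, with $q^n$ replaced by $q^{n+1}$, of the series evaluated in \reff{R:eqn7}--\reff{R:eqn8}. Transforming it by \cite[Lemma~4.1]{Liu2013} and then folding the result into the symmetric theta sum $\sum_{j=-n}^{n}(-1)^j q^{-j^2}=S_n(q)$ by \cite[Eq.~(6.1)]{Liu2013} gives the lemma. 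The delicate point is the endpoint bookkeeping: for the present $q^{n+1}$-series the range closes up to the full $-n\le j\le n$, yielding the clean $S_n(q)$, in contrast with the truncated range $|j|<n$ and the extra term $2q^n/(1+q^n)$ that appeared for the $q^{n}$-series in Proposition~\ref{newliupp8}.
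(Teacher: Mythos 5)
Your proposal is correct and follows the paper's proof essentially verbatim: the paper likewise specializes Theorem~\ref{Liutripthm} at $\alpha=q$, $b=-1$, $c=1$ and then evaluates the resulting ${}_2\phi_1$. The only difference is that the key evaluation you leave half-sketched via a Sears reversal is obtained in the paper in one line by setting $\alpha=1$, $c=-q$ in Proposition~\ref{WWpp2} (it is exactly identity \reff{ram:eqn1}), so no gap remains.
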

\begin{proof} Setting $\alpha=q, c=1$ and $b=-1$ in Theorem~\ref{Liutripthm}, we deduce that
\[
\frac{(q; q)_\infty^2 (-a; q)_\infty}{(a; q)^2_\infty(-q; q)_\infty}
=\sum_{n=0}^\infty (1-q^{2n+1}) \frac{(q/a; q)_n a^nq^{n(n-1)/2}}{(a; q)_{n+1}}
{_2\phi_1}\left( {{q^{-n},  q^{n+1}}
\atop{-q}}; q, -q\right).
\]
Setting $\alpha=1, c=-q$ in Proposition~\ref{WWpp2} and simplifying, we have
\[
{_2\phi_1}\left( {{q^{-n},  q^{n+1}}
\atop{-q}}; q, -q\right)=(-1)^n q^{n(n+1)/2}\sum_{j=-n}^n (-1)^j q^{-j^2}.
\]
Combining the above two equations, we finish the proof of Proposition~\ref{Liutrippa}.
\end{proof}
Putting $a=0$ in Proposition~\ref{Liutrippa}, we obtain the Andrews identity \cite[Eq. (5.15)]{Andrews86},
\cite[Eq. (7.8)]{Liu}
\begin{equation}
(q; q)^2_\infty (q; q^2)_\infty=\sum_{n=0}^\infty \sum_{j=-n}^n (-1)^{j} (1-q^{2n+1}) q^{{(3n^2+n)/2}-j^2}.
\label{heckeeqn1}
\end{equation}
Putting $a=-q^{1/2}$ in Proposition~\ref{Liutrippa} and then replacing $q$ by $q^2$, we deduce that
\[
\frac{(q; q)_\infty (q^2; q^2)_\infty}
{(-q; q)_\infty(-q; q^2)_\infty}=\phi(-q)\psi(-q)=\sum_{n=0}^\infty \sum_{j=-n}^n (-1)^{j}(1-q^{2n+1}) q^{2n^2+n-2j^2}.
\]
Replacing $q$ by $-q$ in the above equation, we are led to the following beautiful identity:
\begin{equation}
\phi(q)\psi(q)=\sum_{n=0}^\infty \sum_{j=-n}^n (-1)^{n+j}(1+q^{2n+1}) q^{2n^2+n-2j^2}.
\label{heckeeqn2}
\end{equation}

Using  Theorem~\ref{Liutripthm}, we can also prove the following proposition.
\begin{prop}\label{Liutrippb} For $|a|<1,$ we have
\[
\frac{(q^2; q^2)_\infty^2 (aq; q^2)_\infty}{(a; q^2)^2_\infty(q; q^2)_\infty}
=\sum_{n=0}^\infty \sum_{j=-n}^n (1+q^{2n+1}) q^{2n^2+n-2j^2-j} \frac{(q^2/a; q^2)_n a^n}{(a; q^2)_{n+1}}.
\]
\end{prop}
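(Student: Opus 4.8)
The plan is to mirror the proof of Proposition~\ref{Liutrippa}: replace $q$ by $q^2$ throughout Theorem~\ref{Liutripthm} and choose the free parameters so that the infinite product on the left collapses to the asserted shape. Concretely, after the substitution $q\mapsto q^2$ I would set $\alpha=q^2$, $b=q$, and $c=1$, keeping the theorem's $a$ equal to the proposition's $a$. With these choices the convergence hypothesis $\max\{|\alpha a|,|\alpha b|,|\alpha ac/q^2|\}<1$ becomes $\max\{|q^2a|,|q^3|,|a|\}<1$, which reduces to $|a|<1$ since $0<q<1$; this matches the stated hypothesis.

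First I would simplify the left-hand side. Using $(q^4;q^2)_\infty=(q^2;q^2)_\infty/(1-q^2)$, $(q^2a;q^2)_\infty=(a;q^2)_\infty/(1-a)$, and $(q^3;q^2)_\infty=(q;q^2)_\infty/(1-q)$, the product $\frac{(q^2\alpha,\alpha c,\alpha ab/q^2;q^2)_\infty}{(\alpha a,\alpha b,\alpha ac/q^2;q^2)_\infty}$ becomes $\frac{1-a}{1+q}\cdot\frac{(q^2;q^2)_\infty^2(aq;q^2)_\infty}{(a;q^2)_\infty^2(q;q^2)_\infty}$. The right-hand side carries the same scalar: after cancelling the common $(q^2;q^2)_n$, using $(q;q^2)_n/(q^3;q^2)_n=(1-q)/(1-q^{2n+1})$, writing $1-q^{4n+2}=(1-q^{2n+1})(1+q^{2n+1})$, and $1/(q^2a;q^2)_n=(1-a)/(a;q^2)_{n+1}$, the coefficient of each term also factors off a $\frac{1-a}{1+q}$. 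Cancelling this common factor reduces the identity to $\frac{(q^2;q^2)_\infty^2(aq;q^2)_\infty}{(a;q^2)_\infty^2(q;q^2)_\infty}=\sum_{n=0}^\infty\frac{(1+q^{2n+1})(q^2/a;q^2)_n(-q^3a)^nq^{n^2-3n}}{(a;q^2)_{n+1}}\,{_2\phi_1}\left({{q^{-2n},q^{2n+2}}\atop{q}};q^2,q\right)$.

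The core step is to evaluate the inner ${_2\phi_1}\left({{q^{-2n},q^{2n+2}}\atop{q}};q^2,q\right)$. For this I would invoke Proposition~\ref{WWpp2} with $q$ replaced by $q^2$ and with $\alpha\to1$, $c=q$, which yields ${_2\phi_1}\left({{q^{-2n},q^{2n+2}}\atop{q}};q^2,q\right)=(-1)^nq^{n^2+n}\sum_{j=-n}^n q^{-2j^2-j}$. Substituting and collecting powers via $(-q^3a)^nq^{n^2-3n}\cdot(-1)^nq^{n^2+n}=a^nq^{2n^2+n}$, the right-hand side becomes exactly $\sum_{n=0}^\infty\sum_{j=-n}^n(1+q^{2n+1})q^{2n^2+n-2j^2-j}\frac{(q^2/a;q^2)_n a^n}{(a;q^2)_{n+1}}$, which is the claimed formula.

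The hard part will be the ${_2\phi_1}$ evaluation. As in Proposition~\ref{Liutrippa}, the specialization $\alpha\to1$ in Proposition~\ref{WWpp2} is singular because of the $1/(1-\alpha)$ factor, so the well-poised factor $(1-\alpha q^{4j})(\alpha;q^2)_j/(1-\alpha)$ must be resolved in the limit: it tends to $1$ when $j=0$ and to $(1+q^{2j})(q^2;q^2)_j$ when $j\ge1$. The genuinely new feature, absent in Proposition~\ref{Liutrippa} where the resulting theta sum $\sum_j(-1)^jq^{-j^2}$ was even in $j$, is that here the summand $q^{-2j^2-j}$ is not symmetric under $j\mapsto -j$. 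One must therefore convert the unilateral well-poised sum into a bilateral one with care: the weight $1+q^{2j}$ splits each $j\ge1$ term into $q^{-2j^2-j}$ and $q^{2j}\cdot q^{-2j^2-j}=q^{-2j^2+j}$, and this second piece supplies precisely the negative-index terms $q^{-2j^2-j}$ for $j=-1,\dots,-n$, so the two halves reassemble into $\sum_{j=-n}^n q^{-2j^2-j}$. Verifying this bookkeeping is the one delicate point; the remaining manipulations are routine $q$-shifted-factorial algebra.
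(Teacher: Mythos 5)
Your proposal is correct and follows exactly the paper's own route: replace $q$ by $q^2$ in Theorem~\ref{Liutripthm}, specialize $\alpha=q^2$, $b=q$, $c=1$, and evaluate the resulting ${_2\phi_1}$ via Proposition~\ref{WWpp2} with $q\mapsto q^2$, $\alpha\to 1$, $c=q$. The bookkeeping you flag (the $\alpha\to1$ limit and the reassembly of the weight $1+q^{2j}$ into the bilateral sum $\sum_{j=-n}^{n}q^{-2j^2-j}$) checks out.
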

\begin{proof} If we replace $q$ by $q^2$ and then setting $\alpha=q^2, b=q, c=1$, then Theorem~\ref{Liutripthm}
becomes
\[
\frac{(q^2; q^2)_\infty^2 (aq; q^2)_\infty}{(a; q^2)^2_\infty(q; q^2)_\infty}
=\sum_{n=0}^\infty  (1+q^{2n+1})\frac{(q^2/a; q^2)_n (-a)^n q^{n^2}}{(a; q^2)_{n+1}}
{_2\phi_1}\left( {{q^{-2n},  q^{2n+2}}
\atop{q}}; q^2, q\right).
\]
Replacing $q$ by $q^2$ in Proposition~\ref{WWpp2} and then setting $\alpha=1, c=q$, we obtain
\[
{_2\phi_1}\left( {{q^{-2n},  q^{2n+2}}
\atop{q}}; q^2, q\right)=(-1)^n q^{n(n+1)}\sum_{j=-n}^n  q^{-2j^2-j}.
\]
Combining the above two equations, we complete the proof of Proposition~\ref{Liutrippb}.
\end{proof}
Setting $a=q$ in Proposition~\ref{Liutrippb}, we immediately conclude that
\begin{equation}
\(\sum_{j=0}^\infty q^{j(j+1)/2}\)^3=\psi^3(q)=\sum_{n=0}^\infty \sum_{j=-n}^n \(\frac{1+q^{2n+1}}{1-q^{2n+1}}\) q^{2n^2+2n-2j^2-j}.
\label{heckeeqn3}
\end{equation}
This identity is similar to Andrews' identity for sums of three triangular numbers in (\ref{R:eqn10}), which
also implies Gauss's famous result that every integer is the sum of three triangular numbers.

Setting $a=-q$ in Proposition~\ref{Liutrippb} and then replacing $q$ by $-q$, we deduce that
\begin{equation}
\psi(q^2)\psi(q)=\sum_{n=0}^\infty \sum_{j=-n}^n (-1)^j q^{2n^2+2n-2j^2-j}.
\label{heckeeqn4}
\end{equation}
Putting $a=0$ in Proposition~\ref{Liutrippb}, we obtain the following identity, which is
similar to the identity in \cite[Eq. (7.17)]{Liu}:
\begin{equation}
\frac{(q^2; q^2)^2_\infty}{(q; q^2)_\infty}=\sum_{n=0}^\infty \sum_{j=-n}^n
(-1)^n(1+q^{2n+1}) q^{3n^2+2n-2j^2-j}.
\label{heckeeqn5}
\end{equation}
\section {Hecke-type series identities}
We \cite{Liu2013} have proved the following general $q$-transformation formula \cite[Theorem~1.6]{Liu2013} using Theorem \ref{liuthm2}.
\begin{thm}\label{mliuthma} If  $\{A_n\}$ is a complex sequence, then, under suitable convergence conditions,
 we have
\begin{align*}
&\frac{(\alpha q, \alpha ab/q; q)_\infty}
{(\alpha a, \alpha b; q)_\infty} \sum_{n=0}^\infty
A_n (q/a; q)_n (\alpha a)^n \\
&=\sum_{n=0}^\infty  \frac{(1-\alpha q^{2n}) (\alpha, q/a, q/b; q)_n (-\alpha ab/q)^n q^{n(n-1)/2}}
{(1-\alpha)(q, \alpha a, \alpha b; q)_n}
\sum_{k=0}^n \frac{(q^{-n}, \alpha q^n; q)_k (q^2/b)^k}{(q/b; q)_k} A_k.
\end{align*}
\end{thm}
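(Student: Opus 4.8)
The plan is to specialize the expansion of Theorem~\ref{liuthm2} to a carefully chosen analytic function $f$. The key observation is that, after putting $x=\alpha a$, the factor $(q/a;q)_n$ appearing on the left equals $(q\alpha/x;q)_n$, and that $(q\alpha/x;q)_n x^n=\prod_{j=1}^{n}(x-\alpha q^{j})$ is in fact a \emph{polynomial} in $x$. Accordingly I would set
\[
f(x)=\sum_{n=0}^\infty A_n\,(q\alpha/x;q)_n\,x^n=\sum_{n=0}^\infty A_n\prod_{j=1}^{n}(x-\alpha q^{j}),
\]
which is analytic near $x=0$ under the stated convergence hypotheses. Since $q\alpha/(\alpha a)=q/a$, this choice gives $f(\alpha a)=\sum_{n=0}^\infty A_n(q/a;q)_n(\alpha a)^n$, so the left-hand side of Theorem~\ref{liuthm2} is exactly the left-hand side of the asserted identity.

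Next I would evaluate $f$ at the nodes $x=\alpha q^{k+1}$ that occur on the right of Theorem~\ref{liuthm2}. Because $q\alpha/(\alpha q^{k+1})=q^{-k}$, one obtains $f(\alpha q^{k+1})=\sum_{m=0}^{k}A_m(q^{-k};q)_m(\alpha q^{k+1})^m$, a \emph{terminating} sum since $(q^{-k};q)_m=0$ for $m>k$. Substituting this into the right-hand side of Theorem~\ref{liuthm2} and interchanging the $k$- and $m$-summations (legitimate because both are finite for each fixed outer index $n$) reduces the task to evaluating, for each fixed $m$, the inner sum
\[
\sum_{k=m}^{n}\frac{(q^{-n},\alpha q^{n};q)_k\,q^k}{(q,\alpha b;q)_k}\,(q^{-k};q)_m\,(\alpha q^{k+1})^m.
\]

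The crucial simplification is the identity $(q^{-k};q)_m=(-1)^m q^{m(m-1)/2-mk}(q;q)_k/(q;q)_{k-m}$, valid for $k\ge m$. Inserting it makes the $k$-dependent powers of $q$ cancel against $(\alpha q^{k+1})^m$, and after the shift $k=m+i$ together with the factorizations $(X;q)_{m+i}=(X;q)_m(Xq^m;q)_i$ the residual sum over $i$ collapses to a terminating series ${}_2\phi_1\left({q^{m-n},\,\alpha q^{n+m}\atop \alpha b q^{m}};q,q\right)$. This is summable in closed form by the $q$-Chu--Vandermonde summation \cite[Eq. (1.5.3)]{Gas+Rah}, and I expect this step to be the technical heart of the argument: coaxing the parameters into the exact ${}_2\phi_1$ shape and confirming that the argument is $q$.

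Finally I would assemble the pieces. The $q$-Chu--Vandermonde output, together with the split factorials $(q^{-n};q)_m$, $(\alpha q^n;q)_m$, $(\alpha b;q)_m$ and the accumulated powers of $\alpha$ and $q$, should simplify (using the reflection $(c;q)_N=(-c)^Nq^{N(N-1)/2}(q^{1-N}/c;q)_N$) so that the inner sum equals $\frac{(q/b;q)_n}{(\alpha b;q)_n}(-\alpha b)^n q^{n(n-1)/2}$ times $\frac{(q^{-n},\alpha q^n;q)_m(q^2/b)^m}{(q/b;q)_m}$. Multiplying the first factor by the outer coefficient $\frac{(1-\alpha q^{2n})(\alpha,q/a;q)_n(a/q)^n}{(1-\alpha)(q,\alpha a;q)_n}$ of Theorem~\ref{liuthm2}, and using $(a/q)^n(-\alpha b)^n=(-\alpha ab/q)^n$, produces precisely the coefficient $\frac{(1-\alpha q^{2n})(\alpha,q/a,q/b;q)_n(-\alpha ab/q)^n q^{n(n-1)/2}}{(1-\alpha)(q,\alpha a,\alpha b;q)_n}$, while the $m$-sum against $A_m$ becomes the stated inner sum. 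This matches Theorem~\ref{mliuthma}. The only delicate points are the rearrangement of the double sum and the $q$-factorial bookkeeping in this last step, both of which are routine once the ${}_2\phi_1$ has been summed.
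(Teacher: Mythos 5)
Your proposal is correct, and it follows the same route the paper indicates: the paper does not reprove Theorem \ref{mliuthma} but cites it from \cite[Theorem~1.6]{Liu2013} as a consequence of Theorem \ref{liuthm2}, and the derivation there is exactly your specialization $f(x)=\sum_n A_n(q\alpha/x;q)_n x^n$ followed by the interchange of summation and the $q$-Chu--Vandermonde evaluation. I checked the exponent bookkeeping in your final step and the inner sum does collapse to $(-\alpha b)^n q^{n(n-1)/2}\frac{(q/b;q)_n}{(\alpha b;q)_n}\cdot\frac{(q^{-n},\alpha q^n;q)_m(q^2/b)^m}{(q/b;q)_m}$ as claimed.
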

\begin{rem} \rm The condition ``independent of $a$" in \cite[Theorem~1.6]{Liu2013} is not necessary.
\end{rem}
The main result of this section is the following theorem, which can be
derived from Theorem \ref{mliuthma} by choosing
\[
A_k=\frac{(q/b, \beta, \gamma; q)_k (bz/q)^k}{(q, c, d, h; q)_k}.
\]
\begin{thm}\label{mliuthmb} For $|\alpha abz/q|<1$, we have the $q$-transformation formula
\begin{align*}
&\frac{(\alpha q, \alpha ab/q; q)_\infty}
{(\alpha a, \alpha b; q)_\infty} {_4\phi_3} \left({{q/a, q/b, \beta, \gamma} \atop { c, d, h}} ;  q, \frac{\alpha ab z}{q} \right) \\
&=\sum_{n=0}^\infty \frac{(1-\alpha q^{2n}) (\alpha, q/a, q/b; q)_n (-\alpha ab/q)^n q^{n(n-1)/2}}
{(1-\alpha)(q, \alpha a, \alpha b; q)_n}
{_4\phi_3} \left({{q^{-n}, \alpha q^n, \beta, \gamma} \atop {c, d, h}} ;  q, qz \right).
\end{align*}
\end{thm}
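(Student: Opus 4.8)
The plan is to obtain Theorem~\ref{mliuthmb} as a direct specialization of the general transformation in Theorem~\ref{mliuthma}, made with the particular choice of complex sequence indicated in the text, namely
\[
A_k=\frac{(q/b, \beta, \gamma; q)_k (bz/q)^k}{(q, c, d, h; q)_k}.
\]
Since Theorem~\ref{mliuthma} holds for an arbitrary sequence $\{A_n\}$ (the remark after it having removed the spurious independence hypothesis), it suffices to substitute this $A_k$ into both sides and verify that each side collapses into the advertised ${_4\phi_3}$ series. The whole argument is therefore a bookkeeping computation; the only points requiring care are the exact cancellations of powers of $b$ and $q$, and the observation that for a ${_4\phi_3}$ the ``balancing'' exponent $1+s-r=1+3-4=0$ makes the factor $\bigl((-1)^n q^{n(n-1)/2}\bigr)^{1+s-r}$ in the definition of ${_r\phi_s}$ equal to $1$, so no extra $q$-powers or signs intrude.

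First I would treat the left-hand side. Substituting $A_n$ into $\sum_{n=0}^\infty A_n (q/a;q)_n (\alpha a)^n$ and combining the powers via $(\alpha a)^n (bz/q)^n=(\alpha abz/q)^n$, one finds
\[
\sum_{n=0}^\infty A_n (q/a;q)_n (\alpha a)^n
=\sum_{n=0}^\infty \frac{(q/a, q/b, \beta, \gamma; q)_n}{(q, c, d, h; q)_n}\Bigl(\frac{\alpha abz}{q}\Bigr)^{n}
={_4\phi_3}\Bigl({{q/a, q/b, \beta, \gamma}\atop{c, d, h}};q,\frac{\alpha abz}{q}\Bigr),
\]
where the last equality uses that the series is a ${_4\phi_3}$ with trivial balancing factor. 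Multiplying through by the prefactor $(\alpha q, \alpha ab/q;q)_\infty/(\alpha a,\alpha b;q)_\infty$ reproduces exactly the left-hand side of Theorem~\ref{mliuthmb}.

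Next I would simplify the inner finite sum on the right-hand side of Theorem~\ref{mliuthma}. Inserting $A_k$, the factor $(q/b;q)_k$ appearing in the numerator of $A_k$ cancels against the $(q/b;q)_k$ in the denominator of the kernel, and the powers combine as $(q^2/b)^k (bz/q)^k=(qz)^k$, giving
\[
\sum_{k=0}^n \frac{(q^{-n}, \alpha q^n; q)_k (q^2/b)^k}{(q/b; q)_k}A_k
=\sum_{k=0}^n \frac{(q^{-n}, \alpha q^n, \beta, \gamma; q)_k}{(q, c, d, h; q)_k}(qz)^k
={_4\phi_3}\Bigl({{q^{-n}, \alpha q^n, \beta, \gamma}\atop{c, d, h}};q,qz\Bigr).
\]
Substituting this back into the outer sum of Theorem~\ref{mliuthma} yields precisely the right-hand side of Theorem~\ref{mliuthmb}. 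The convergence hypothesis $|\alpha abz/q|<1$ is exactly what guarantees that the left-hand ${_4\phi_3}$ (whose argument is $\alpha abz/q$) converges, so the application of Theorem~\ref{mliuthma} is legitimate. I do not anticipate any genuine obstacle beyond confirming these two cancellations and the triviality of the balancing factor, which together make the substitution exact.
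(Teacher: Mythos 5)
Your proposal is correct and is exactly the paper's own argument: the author derives Theorem~\ref{mliuthmb} from Theorem~\ref{mliuthma} with the very same choice $A_k=\frac{(q/b,\beta,\gamma;q)_k(bz/q)^k}{(q,c,d,h;q)_k}$, leaving the bookkeeping implicit. Your verification of the two cancellations ($(q/b;q)_k$ against the kernel, and $(q^2/b)^k(bz/q)^k=(qz)^k$) and of the trivial balancing factor for a ${_4\phi_3}$ fills in precisely the details the paper omits.
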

Now we will begin to derive Hecke-type series identities using Theorem \ref{mliuthmb}.

\subsection{The proof of Theorem \ref{liunewthmf}}
Setting $\alpha=q, c=-q, z=-1, d=h=\beta=\gamma=0$ in Theorem \ref{mliuthmb}, we deduce that
\begin{align*}
&\frac{(q, ab; q)_\infty}{(qa, qb; q)_\infty}\sum_{n=0}^\infty \frac{(q/a, q/b; q)_n (-ab)^n}{(q^2; q^2)_n}\\
&=\sum_{n=0}^\infty \frac{(1-q^{2n+1})(q/a, q/b; q)_n (-ab)^n q^{n(n-1)/2}}{(qa, qb; q)_n}
{_2\phi_1} \left({{q^{-n}, q^{n+1}} \atop { -q}} ;  q, -q \right).
\end{align*}
By setting $\alpha=1$ and $c=-q$ in Proposition~\ref{WWpp2}, we can easily find that
\begin{equation}
{_2\phi_1} \left({{q^{-n}, q^{n+1}} \atop { -q}} ;  q, -q \right)
=(-1)^n q^{n(n+1)/2} \sum_{j=-n}^n (-1)^j q^{-j^2},
\label{ram:eqn1}
\end{equation}
Combining the above two equations, we finish the proof of Theorem~\ref{liunewthmf}.
\subsection{} Setting $d=h=\gamma=0, \beta=q$ and $z=1$, replacing $c$ by $qc$ in Theorem \ref{mliuthmb} and
then using \cite[Lemma~4.1]{Liu2013} we can prove the following $q$-formula \cite[Theorem~1.9]{Liu2013},
which has many applications to Hecke-type series identities (see \cite{Liu2013} for the details).
\begin{thm}\label{mliuthmc} For $|\alpha ab/q|<1$, we have the $q$-identity
\begin{align*}
&\frac{(q\alpha, \alpha ab/q; q)_\infty}{(\alpha a, \alpha b; q)_\infty}\sum_{n=0}^\infty \frac{(q/a, q/b; q)_n (\alpha ab/q)^n}{(cq; q)_n}\\
&=\sum_{n=0}^\infty \frac{(1-\alpha q^{2n}) (\alpha, q/a, q/b; q)_n (-\alpha^2 ab)^n q^{3n(n-1)/2}}
{(1-\alpha)(qc, \alpha a, \alpha b; q)_n}
\sum_{j=0}^n \frac{(c; q)_j\alpha^{-j} q^{j(1-n)}}{(q; q)_j}.
\end{align*}
\end{thm}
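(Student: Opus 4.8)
The plan is to read Theorem~\ref{mliuthmc} as a specialization of the general transformation in Theorem~\ref{mliuthmb}, followed by a single application of \cite[Lemma~4.1]{Liu2013} to the terminating inner series. First I would set $d=h=\gamma=0$, $\beta=q$, $z=1$ in Theorem~\ref{mliuthmb} and replace $c$ by $qc$. On the left-hand side the outer $_4\phi_3$ collapses: the parameter $\gamma=0$ and the two lower parameters $d=h=0$ contribute factors $(0;q)_n=1$, while $\beta=q$ contributes a factor $(q;q)_n$ in the numerator that cancels the intrinsic $(q;q)_n$ in the denominator of the series. What survives is $\sum_{n\ge 0}\frac{(q/a,q/b;q)_n}{(qc;q)_n}(\alpha ab/q)^n$, so the left-hand side becomes exactly $\frac{(q\alpha,\alpha ab/q;q)_\infty}{(\alpha a,\alpha b;q)_\infty}\sum_{n\ge0}\frac{(q/a,q/b;q)_n(\alpha ab/q)^n}{(qc;q)_n}$, as required.

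Next I would apply the same specialization to the inner $_4\phi_3$ on the right-hand side. By the identical cancellation it reduces to the terminating sum $\sum_{k=0}^n\frac{(q^{-n},\alpha q^n;q)_k}{(qc;q)_k}q^k$. The one substantive step is to recognize this sum through \cite[Lemma~4.1]{Liu2013}, the same lemma whose specialization at parameters $1$ and $-1$ gives \reff{R:eqn7}; in the present notation it reads $\sum_{k=0}^n\frac{(q^{-n},\alpha q^n;q)_k}{(qc;q)_k}q^k=\alpha^n q^{n^2}\frac{(q;q)_n}{(qc;q)_n}\sum_{j=0}^n\frac{(c;q)_j\alpha^{-j}q^{j(1-n)}}{(q;q)_j}$. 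The effect is to pull the factor $(qc;q)_k$ out of each summand and replace it by a single overall $(qc;q)_n^{-1}$, while introducing the reversed $j$-summation that appears in Theorem~\ref{mliuthmc}.

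Finally I would substitute this identity back and collect scalars. Multiplying the outer coefficient $\frac{(1-\alpha q^{2n})(\alpha,q/a,q/b;q)_n(-\alpha ab/q)^n q^{n(n-1)/2}}{(1-\alpha)(q,\alpha a,\alpha b;q)_n}$ by the lemma's prefactor $\alpha^n q^{n^2}(q;q)_n/(qc;q)_n$, the $(q;q)_n$ cancels; the powers of $\alpha$ combine to $(-\alpha^2 ab)^n$ and the powers of $q$ combine to $q^{-n+n(n-1)/2+n^2}=q^{3n(n-1)/2}$, producing precisely the coefficient $\frac{(1-\alpha q^{2n})(\alpha,q/a,q/b;q)_n(-\alpha^2 ab)^n q^{3n(n-1)/2}}{(1-\alpha)(qc,\alpha a,\alpha b;q)_n}$ of Theorem~\ref{mliuthmc}. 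I expect the main obstacle to be exactly the inner transformation via \cite[Lemma~4.1]{Liu2013}: the collapsing of the two $_4\phi_3$'s and the exponent bookkeeping are routine, so the weight of the argument sits in invoking that lemma in the precise form above. To be safe I would verify it independently — for instance by reversing the order of summation in the terminating $k$-sum and resumming by $q$-Chu--Vandermonde — to confirm that the powers of $\alpha$ and $q$ and the factor $q^{j(1-n)}$ emerge correctly.
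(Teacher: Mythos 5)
Your proposal is correct and is exactly the paper's proof: Liu obtains Theorem~\ref{mliuthmc} by setting $d=h=\gamma=0$, $\beta=q$, $z=1$ and replacing $c$ by $qc$ in Theorem~\ref{mliuthmb}, then applying \cite[Lemma~4.1]{Liu2013} to the terminating inner sum, and your statement of that lemma and your exponent bookkeeping (yielding $(-\alpha^2 ab)^n q^{3n(n-1)/2}$ and the $(qc;q)_n$ denominator) check out against the paper's other uses of the same lemma, e.g.\ \reff{R:eqn7} and Theorem~\ref{andrewsthm}.
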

\subsection{} In this subsection, we will prove the following theorem using Theorem~\ref{mliuthmb}.
\begin{thm}\label{mliuthmd} For $|ab/q|<1, $ we have the $q$-transformation formula
\begin{align*}
&\frac{(q^2,  ab; q^2)_\infty}
{(q^2a,  q^2b; q^2)_\infty} \sum_{n=0}^\infty \frac{(q^2/a, q^2/b; q^2)_n (ab/q)^n}
{(q; q)_{2n}}\\
&=\sum_{n=0}^\infty \sum_{j=-n}^n (1-q^{4n+2})q^{2n^2-2j^2-j}\frac{ (q^2/a, q^2/b; q^2)_n (ab)^n }
{(q^2a, q^2 b; q^2)_n}.
\end{align*}
\end{thm}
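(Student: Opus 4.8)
The plan is to specialize Theorem~\ref{mliuthmb} with a base change $q\mapsto q^2$, then evaluate the resulting inner $_4\phi_3$ (which collapses to a $_2\phi_1$) via the Watson-type limiting cases from Section~2. The structure of the target identity---a $q^2$-product on the left and a Hecke-type double sum $\sum_{n}\sum_{j=-n}^n$ on the right---strongly suggests this is the $q\mapsto q^2$ analogue of the mechanism used to prove Theorem~\ref{liunewthmf}. Indeed, comparing the right-hand sides, the factor $(1-q^{4n+2})q^{2n^2-2j^2-j}$ should emerge from combining a prefactor $(1-q^{4n+2})$ with an inner-sum evaluation of the form $\sum_{j=-n}^n q^{-2j^2-j}$, exactly as \reff{ram:eqn1} produced $\sum_{j=-n}^n(-1)^j q^{-j^2}$ in the previous subsection.

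First I would replace $q$ by $q^2$ throughout Theorem~\ref{mliuthmb} and choose the parameters so that the left-hand $_4\phi_3$ degenerates into the series $\sum_{n}(q^2/a,q^2/b;q^2)_n (ab/q)^n/(q;q)_{2n}$ appearing in the claim. The denominator $(q;q)_{2n}$ is the key clue: under $q\mapsto q^2$, one has identities relating $(q;q)_{2n}$ to products like $(q^2;q^2)_n(q;q^2)_n$, and the factor $(q;q)_{2n}$ (rather than $(q^2;q^2)_n$) indicates that the inner $_4\phi_3$ argument $q^2 z$ and the lower parameters must be tuned---I expect a choice along the lines of $\alpha=q^2$, one lower parameter set to $-q$ or $q$, the remaining numerator/denominator parameters set to $0$, and $z$ chosen to produce the power $(ab/q)^n$. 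This is the step requiring the most care: matching the exact prefactor $(ab)^n$ versus $(ab/q)^n$ on the two sides and reconciling the $q^{2n^2}$ against the $q^{n(n-1)/2}$-type Gaussian factor coming from Theorem~\ref{mliuthmb} after the base change.

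Once the parameters are fixed, the inner sum on the right-hand side of Theorem~\ref{mliuthmb} becomes a terminating $_2\phi_1$ of the shape ${_2\phi_1}\!\left({q^{-2n},\,q^{2n+2}\atop \alpha c};q^2,\cdot\right)$, and I would evaluate it using Proposition~\ref{WWpp2} with $q$ replaced by $q^2$ and an appropriate choice of $c$. By the pattern in Proposition~\ref{Liutrippb}'s proof, I anticipate the evaluation
\[
{_2\phi_1}\left({{q^{-2n},\,q^{2n+2}}\atop{c'}};q^2,\,\cdot\right)=(-1)^n q^{n(n+1)}\sum_{j=-n}^n q^{-2j^2-j},
\]
which supplies precisely the symmetric inner sum $\sum_{j=-n}^n q^{-2j^2-j}$ and the Gaussian $q^{n(n+1)}$ needed to convert the $q^{n(n-1)/2}$-style factor from Theorem~\ref{mliuthmb} into the final $q^{2n^2-2j^2-j}$.

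The main obstacle will be the bookkeeping in the exponent of $q$ and the sign $(-1)^n$: after substituting the $_2\phi_1$ evaluation, the combined power of $q$ from the outer summand's $q^{n(n-1)}$ (the $q\mapsto q^2$ image of $q^{n(n-1)/2}$) together with $q^{n(n+1)}$ and the $-2j^2-j$ from the inner sum must telescope to exactly $2n^2-2j^2-j$, and the outer $(-\alpha ab/q)^n$-type factor must cancel the $(-1)^n$ from the $_2\phi_1$ to leave the clean $(ab)^n$ with no residual sign. Verifying that $(1-\alpha q^{2n})$ becomes exactly $(1-q^{4n+2})$ and that all the $(q^2a,q^2b;q^2)_n$ factors survive in the denominator while the $(1-\alpha)=(1-q^2)$ and $(q^2;q^2)_n$ factors are absorbed is routine but is where an arithmetic slip is most likely; I would check it by confirming the $n=0$ term reduces both sides to the known $q^2$-product identity.
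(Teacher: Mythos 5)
Your plan coincides with the paper's proof: the paper replaces $q$ by $q^2$ in Theorem~\ref{mliuthmb}, sets $d=h=\beta=\gamma=0$, $\alpha=q^2$, $c=q$, $z=q^{-1}$, and evaluates the resulting inner ${_2\phi_1}\left({{q^{-2n},\,q^{2n+2}}\atop{q}};q^2,q\right)$ via Proposition~\ref{WWpp2} with $q$ replaced by $q^2$, $\alpha=1$, $c=q$, obtaining exactly the evaluation $(-1)^n q^{n(n+1)}\sum_{j=-n}^n q^{-2j^2-j}$ you anticipated. The only details you left open are these precise parameter values; otherwise the route is identical.
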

\begin{proof} Replacing $q$ by $q^2$ in Theorem \ref{mliuthmb} and then
setting $d=h=\beta=\gamma=0, \alpha=q^2, c=q$ and $z=q^{-1},$ we obtain
\begin{align*}
&\frac{(q^2,  ab; q^2)_\infty}
{(q^2a,  q^2b; q^2)_\infty} {_2\phi_1} \left({{q^2/a, q^2/b} \atop { q}} ;  q^2, \frac{ab }{q} \right) \\
&=\sum_{n=0}^\infty (1-q^{4n+2})\frac{ (q^2/a, q^2/b; q^2)_n (-ab)^n q^{n(n-1)}}
{(q^2a, q^2 b; q^2)_n}
{_2\phi_1} \left({{q^{-2n}, q^{2n+2}} \atop {q}} ;  q^2, q \right).
\end{align*}
Replacing $q$ by $q^2$ in Proposition \ref{WWpp2} and then setting $\alpha=1$ and $c=q$, we obtain
\[
{_2\phi_1}\left({{q^{-2n},  q^{2n+2}}\atop{q}}; q^2, q\right)
=(-1)^n q^{n(n+1)}\sum_{j=-n}^n q^{-2j^2-j}.
\]
Combining the above two equations, we complete the proof of Theorem~\ref{mliuthmd}.
\end{proof}
Setting $(a, b)=(0, 0), (1, 0), (q, -q)$ in Theorem~\ref{mliuthmd} respectively,  we obtain
the following three Hecke-type series identities.
\begin{align}
&\sum_{n=0}^\infty \frac{q^{2n^2+n}}{(q; q)_{2n}}
=\frac{1}{(q^2; q^2)_\infty}
\sum_{n=0}^\infty \sum_{j=-n}^n(1-q^{4n+2}) q^{4n^2-2j^2+2n-j} ,
\label{ram:eqn2}\\
&\sum_{n=0}^\infty \frac{(-1)^n q^{n^2}}{(q; q^2)_n}
=\sum_{n=0}^\infty \sum_{j=-n}^n (-1)^n (1-q^{4n+2}) q^{3n^2-2j^2+n-j},
\label{ram:eqn3}\\
&\sum_{n=0}^\infty \frac{(q^2; q^4)_n (-q)^n}{ (q; q)_{2n}}
=\frac{(q^2; q^4)_\infty}{(q^4; q^4)_\infty}
\sum_{n=0}^\infty \sum_{j=-n}^n (-1)^n q^{2n^2+2n-2j^2-j}.
\label{ram:eqn4}
\end{align}
The identity in (\ref{ram:eqn2}) is equivalent to the identity in \cite[Corollary~5.4]{Rowell}.
\subsection{} In this subsection,  we will set up the following transformation formula.
\begin{thm}\label{mliuthme} For $|ab|<1,$ we have the $q$-transformation formula
\begin{align*}
&\frac{(q^2, ab; q^2)_\infty}{(q^2a, q^2b; q^2)_\infty}
\sum_{n=0}^\infty \frac{(q^2/a, q^2/b, q; q^2)_n (ab)^n}{(q^2; q^2)_n (-q; q)_{2n}}\\
&=\sum_{n=0}^\infty \sum_{j=-n}^n (-1)^j (1-q^{4n+2}) q^{2n^2-j^2}
\frac{(q^2/a, q^2/b, q; q^2)_n (ab)^n}{(q^2a; q^2b; q^2)_n }.
\end{align*}
\end{thm}
\begin{proof} It is easily seen that using Proposition \ref{WWpp1}, one can
prove that
\begin{equation}
{_3\phi_2}\left({{q^{-2n},  q^{2n+2}, q}\atop{{-q}, -{q^2}}}; q^2, q^2\right)
=(-1)^n q^{n^2+n}S_n(q),
\label{ram:eqn5}
\end{equation}
which can be also found in \cite[p. 30, Eq. (6.15)]{Andrews2012}. Replacing $q$ by $q^2$ in
Theorem~\ref{mliuthmb} and then putting $ \alpha=q^2, \beta=q, c=-q, d=-q^2, \gamma=h=0$ and $z=1,$ we deduce
that
\begin{align*}
&\frac{(q^2, ab; q^2)_\infty}{(q^2a, q^2b; q^2)_\infty}
\sum_{n=0}^\infty \frac{(q^2/a, q^2/b, q; q^2)_n (ab)^n}{(q^2; q^2)_n (-q; q)_{2n}}\\
&=\sum_{n=0}^\infty  (1-q^{4n+2})
\frac{(q^2/a, q^2/b, q; q^2)_n (-ab)^nq^{n^2-n}}{(q^2a; q^2b; q^2)_n }
{_3\phi_2}\left({{q^{-2n},  q^{2n+2}, q}\atop{{-q}, -{q^2}}}; q^2, q^2\right).
\end{align*}
Substituting (\ref{ram:eqn5}) into the right-hand side of the above equation, we
complete the proof of Theorem \ref{mliuthme}.
\end{proof}
Setting $a=b=0$ in Theorem \ref{mliuthme}, we obtain the following identity of Andrews
\cite[Eq.(1.16)]{Andrews2012}
\begin{prop}\label{mpp1} {\rm (Andrews)} We have the Hecke-type series identity
\[
\sum_{n=0}^\infty \frac{(q; q^2)_n q^{2n^2+2n}}{(q^2; q^2)_n(-q; q)_{2n}}
=\frac{1}{(q^2; q^2)_\infty}\sum_{n=0}^\infty \sum_{j=-n}^n (-1)^j (1-q^{4n+2}) q^{4n^2+2n-j^2}.
\]
\end{prop}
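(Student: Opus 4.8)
The plan is to obtain Proposition~\ref{mpp1} as the specialization $a=b=0$ of Theorem~\ref{mliuthme}, so the work is essentially already done once Theorem~\ref{mliuthme} is available. First I would set $a=b=0$ directly in both sides of Theorem~\ref{mliuthme}. On the left, the prefactor $(q^2, ab; q^2)_\infty/(q^2a, q^2b; q^2)_\infty$ collapses to $(q^2; q^2)_\infty$ since $ab\to 0$ forces $(ab;q^2)_\infty\to 1$ and each of $(q^2a;q^2)_\infty,(q^2b;q^2)_\infty\to 1$. However, the summand on the left contains the factor $(q^2/a, q^2/b; q^2)_n (ab)^n$, which is an indeterminate $0\cdot\infty$ limit that must be handled by the standard $q$-series convention.

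The key step is the routine limit evaluation
\[
\lim_{a,b\to 0}(q^2/a; q^2)_n (q^2/b; q^2)_n (ab)^n = q^{2\cdot 2\binom{n}{2}+2n}=q^{2n^2}\cdot(-1)^{?},
\]
which I would carry out carefully: writing $(q^2/a;q^2)_n=\prod_{k=0}^{n-1}(1-q^{2k+2}/a)$, the leading behaviour as $a\to 0$ is $(-1)^n q^{2(1+2+\cdots+n)}a^{-n}=(-1)^n q^{n(n+1)}a^{-n}$, and similarly for the $b$-factor, so the product with $(ab)^n$ tends to $q^{2n(n+1)}=q^{2n^2+2n}$ with an overall sign $(-1)^{2n}=1$. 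Thus on the left the summand becomes $q^{2n^2+2n}(q;q^2)_n/\big((q^2;q^2)_n(-q;q)_{2n}\big)$, matching exactly the left side of Proposition~\ref{mpp1} after dividing through by the prefactor $(q^2;q^2)_\infty$.

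For the right side the same limit applies to the factor $(q^2/a, q^2/b; q^2)_n (ab)^n$, again yielding $q^{2n^2+2n}$, while the denominator $(q^2a, q^2b; q^2)_n\to 1$; combining this with the explicit factor $(1-q^{4n+2})q^{2n^2-j^2}(-1)^j(q;q^2)_n$ and simplifying the powers of $q$ (note $2n^2+2n+2n^2-j^2=4n^2+2n-j^2$) produces precisely the double sum $(-1)^j(1-q^{4n+2})q^{4n^2+2n-j^2}$ of the proposition. The only mild obstacle is book-keeping of the $0\cdot\infty$ limit and verifying that the exponents of $q$ on both sides assemble correctly; there is no genuine difficulty, since Theorem~\ref{mliuthme} has already been established and the specialization is a direct substitution followed by this elementary limit.
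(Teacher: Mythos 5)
Your route is the paper's route: Proposition~\ref{mpp1} is obtained there exactly by setting $a=b=0$ in Theorem~\ref{mliuthme}, and your limit evaluation $(q^2/a,q^2/b;q^2)_n(ab)^n\to q^{2n^2+2n}$ is correct and is all the specialization requires on the left-hand side (the prefactor indeed tends to $(q^2;q^2)_\infty$, which you then move to the other side).

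There is, however, one genuine gap in your bookkeeping on the right-hand side. As printed, the right-hand side of Theorem~\ref{mliuthme} carries the factor $(q^2/a,q^2/b,q;q^2)_n$, so after the limit $a,b\to 0$ you are left with
\[
\sum_{n=0}^\infty\sum_{j=-n}^n(-1)^j(1-q^{4n+2})\,(q;q^2)_n\,q^{4n^2+2n-j^2},
\]
which has an extra $(q;q^2)_n$ not present in Proposition~\ref{mpp1}. You list this factor explicitly in your combination step and then it silently disappears from the final expression; as written, that step fails, since $(q;q^2)_n\neq 1$ for $n\geq 1$. The resolution is that the $(q;q^2)_n$ on the right-hand side of Theorem~\ref{mliuthme} is a misprint: retracing its proof, the coefficient multiplying the inner ${}_3\phi_2$ comes from Theorem~\ref{mliuthmb}, whose prefactor $(\alpha,q/a,q/b;q)_n/(q,\alpha a,\alpha b;q)_n$ does not involve the upper parameter $\beta=q$, so the correct numerator there is $(q^2/a,q^2/b;q^2)_n$ only (the $(q;q^2)_n$ belongs only on the left-hand side, where it comes from the ${}_4\phi_3$). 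A complete argument must either make this correction explicit --- e.g.\ by rederiving the right-hand side directly from Theorem~\ref{mliuthmb} together with (\ref{ram:eqn5}) --- or else the stated conclusion does not follow from Theorem~\ref{mliuthme} as printed.
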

Putting $a=1$ and $b=0$ in Theorem \ref{mliuthme}, we obtain the identity
\begin{equation}
\sum_{n=0}^\infty \frac{(-1)^n (q; q^2)_n q^{n^2+n}}{(-q; q)_{2n}}
=\sum_{n=0}^\infty \sum_{j=-n}^n (-1)^{j+n} (1-q^{4n+2}) q^{3n^2+n-j^2}.
\label{ram:eqn6}
\end{equation}
Taking $a=b=q$ in Theorem \ref{mliuthme} and simplifying, we find that
\begin{equation}
\sum_{n=0}^\infty \frac{(q; q^2)^3 q^{2n}}{(q^2; q^2)_n(-q; q)_{2n}}
=\frac{(q; q^2)_\infty^2}{(q^2; q^2)_\infty^2}
\sum_{n=0}^\infty \sum_{j=-n}^n (-1)^j \frac{1+q^{2n+1}}{1-q^{2n+1}} q^{2n^2+n-j}.
\label{ram:eqn7}
\end{equation}
\subsection{} The following $q$-formula is established in this subsection by using Theorem~\ref{mliuthmb}.
\begin{thm}\label{mliuthmf} If $T_n$ is defined as in (\ref{m:eqn1}) and $|ab/q|<1,$ then,  we have
\begin{align*}
&\frac{(q, ab/q; q)_\infty}{(a, b; q)_\infty} \sum_{n=0}^\infty \frac{(-q; q)_n^2(q/a, q/b; q)_n (ab/q)^n}{(q; q)_{2n}}\\
&=1+\sum_{n=1}^\infty (1+q^n)\frac{(q/a, q/b; q)_n(ab)^n}{(a, b;q)_n} q^{n^2-2n}(q^nT_n(q)-T_{n-1}(q)).
\end{align*}
\end{thm}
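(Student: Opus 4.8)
The plan is to obtain Theorem~\ref{mliuthmf} as one more specialization of Theorem~\ref{mliuthmb}, in exactly the spirit of the proofs of Theorems~\ref{mliuthmd} and~\ref{mliuthme}: pick the free parameters so that the left-hand side of Theorem~\ref{mliuthmb} collapses onto the left-hand side of Theorem~\ref{mliuthmf}, and then evaluate in closed form the terminating inner ${}_{4}\phi_{3}$ that remains on the right. Since the target identity lives in base $q$ (not $q^2$) yet carries $(q;q)_{2n}$ in a denominator together with $(-q;q)_n^2$ in a numerator, I would run Theorem~\ref{mliuthmb} in base $q$ and take the limit $\alpha\to 1$.

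Concretely, I would set $\beta=-q$, $\gamma=h=0$, $c=q^{1/2}$, $d=-q^{1/2}$ and $z=1$. The key is the duplication $(q;q)_{2n}=(q;q^2)_n(q^2;q^2)_n$ together with $(q^2;q^2)_n=(q;q)_n(-q;q)_n$ and $(q;q^2)_n=(q^{1/2};q)_n(-q^{1/2};q)_n$, which give
\[
\frac{(-q;q)_n}{(q^{1/2},-q^{1/2};q)_n}=\frac{(-q;q)_n}{(q;q^2)_n}=\frac{(-q;q)_n^2\,(q;q)_n}{(q;q)_{2n}} .
\]
Hence, after cancelling the base factor $(q;q)_n$, the summand of the inner series reproduces exactly $\dfrac{(-q;q)_n^2(q/a,q/b;q)_n(ab/q)^n}{(q;q)_{2n}}$, so the left-hand side of Theorem~\ref{mliuthmb} becomes that of Theorem~\ref{mliuthmf}. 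On the right, the very-well-poised factor is a $0/0$ form at $\alpha=1$, and the limit $\alpha\to1$ (using $(\alpha;q)_n/(1-\alpha)\to(q;q)_{n-1}$, whence $(1-\alpha q^{2n})/(1-q^n)\to 1+q^n$) turns the prefactor into $(1+q^n)(-1)^n q^{n(n-1)/2}(q/a,q/b;q)_n(ab/q)^n/(a,b;q)_n$, while the inner ${}_{4}\phi_{3}$ reduces to
\[
V_n:={}_{3}\phi_{2}\left({{q^{-n},\,q^{n},\,-q}\atop{q^{1/2},\,-q^{1/2}}};q,q\right).
\]
Since the two factors $q^{n(n-1)/2}$ combine with $(ab/q)^n$ into $(ab)^nq^{n^2-2n}$ and the two signs $(-1)^n$ cancel, the whole of Theorem~\ref{mliuthmf} is then equivalent to the single finite evaluation
\[
V_n=(-1)^n q^{n(n-1)/2}\bigl(q^{n}T_n(q)-T_{n-1}(q)\bigr),\qquad n\ge 1,
\]
the isolated $n=0$ term $V_0=1$ accounting for the leading $1$ on the right of Theorem~\ref{mliuthmf}.

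The whole difficulty is thus concentrated in this evaluation of $V_n$, which I expect to be the main obstacle. Among the limiting cases of Watson's theorem in Section~2, only Proposition~\ref{WWpp1} carries the weight $q^{-j(j+1)/2}$ needed to build $T_n(q)=\sum_{j=0}^n q^{-j(j+1)/2}$, so I would try to read $V_n$ off Proposition~\ref{WWpp1}. The obstruction is that matching the very-well-poised entry $q^{n}$ forces its parameter to the degenerate value $q^{-1}$, precisely where Proposition~\ref{WWpp1} breaks down: its prefactor $(q;q)_n/(q\alpha;q)_n$ blows up while the finite sum on the right vanishes, producing an $\infty\times 0$ form. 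I would resolve this by passing to the limit carefully, the surviving contribution separating into two theta pieces, peeling off the top ($j=n$) term exactly as in the proof of Proposition~\ref{newliupp8} and using \cite[Eq.~(6.1)]{Liu2013}: the truncated sum yields $T_{n-1}(q)$ and the top term supplies the $q^{n}T_n(q)$ shift, giving the difference $q^{n}T_n(q)-T_{n-1}(q)$. Alternatively, since $V_0=1$, $V_1=-q$, $V_2=q^3+q^2-q$ already agree with the claimed formula, I would verify the evaluation by showing both sides satisfy the same three-term recurrence in $n$ (from a contiguous relation for the ${}_{3}\phi_{2}$), which sidesteps the degenerate limit altogether.
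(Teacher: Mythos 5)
Your proposal is correct and follows essentially the same route as the paper: the paper also specializes Theorem~\ref{mliuthmb} with $\beta=-q$, $z=1$, $h=\gamma$, $c=q^{1/2}$, $d=-q^{1/2}$ and $\alpha\to 1$, and evaluates the resulting terminating ${}_3\phi_2$ via Proposition~\ref{WWpp1} with $\alpha\to q^{-1}$, $c=q^{3/2}$, $d=-q^{3/2}$ (the degenerate limit you anticipated, which the paper also notes is Andrews' Eq.~(5.3) in \cite{Andrews2012}).
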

\begin{proof} Setting $\beta=-q, z=1, h=\gamma, c=q^{1/2}, d=-q^{1/2}$ and letting $\alpha \to 1$ in
Theorem~\ref{mliuthmb}, we find that
\begin{align*}
&\frac{(q, ab/q; q)_\infty}{(a, b; q)_\infty} \sum_{n=0}^\infty \frac{(-q; q)_n^2(q/a, q/b; q)_n (ab/q)^n}{(q; q)_{2n}}\\
&=1+\sum_{n=1}^\infty (1+q^n)\frac{(q/a, q/b; q)_n(-ab)^n}{(a, b;q)_n}q^{n(n-3)/2}
{_3\phi_2} \left({{q^{-n}, q^n, -q} \atop {q^{1/2}, -q^{1/2}}} ;  q, q \right).
\end{align*}
Letting $\alpha \to q^{-1}$ and $ c=q^{3/2}, d=-q^{3/2}$ in Proposition~\ref{WWpp1} and simplifying, we can obtain
\[
{_3\phi_2} \left({{q^{-n}, q^n, -q} \atop {q^{1/2}, -q^{1/2}}} ;  q, q \right)
=(-1)^n q^{n(n-1)/2}\(q^n T_n(q)-T_{n-1}(q)\),
\]
which can also be found in \cite[Eq. (5.3)]{Andrews2012}. Combining the above two equations, we complete the proof
of Theorem~\ref{mliuthmf}.
\end{proof}
Setting $a=b=0$ in Theorem~\ref{mliuthmf} and simplifying , we obtain the following identity of Andrews \cite[Eq. (1.11)]{Andrews2012}:
\begin{equation}
\sum_{n=0}^\infty \frac{q^{n^2}(-q; q)^2_n}{(q; q)_{2n}}
=\frac{1}{(q; q)_\infty} \sum_{n=0}^\infty (1-q^{6n+6}) q^{2n^2+n} \sum_{j=0}^n q^{-j(j+1)/2}.
\label{ram:eqn8}
\end{equation}
Setting $b=0$, multiplying both sides by $1-a$ and letting $a=1,$ we arrive at
\begin{align}
&\sum_{n=0}^\infty (-1)^n \frac{(-q; q)_n q^{n(n-1)/2}}{(q; q^2)_n}
\label{ram:eqn9}\\
&=\sum_{n=0}^\infty \sum_{j=0}^n (-1)^n (1-q^{6n+2}) q^{(3n^2-n)/2-j(j+1)/2} .
\nonumber
\end{align}
Using the same argument as that we used in the proof of Theorem~\ref{mliuthmf}, we can prove the
following identity by using \cite[Eq.(5.3)]{Andrews2012}.
\begin{thm}\label{mliuthmg} If $|ab/q|<1$ and $T_n$ is defined as in (\ref{m:eqn1}), then,  we have
\begin{align*}
&\frac{(q, ab/q; q)_\infty}{(a, b; q)_\infty} \sum_{n=0}^\infty \frac{(q/a, q/b; q)_n (ab/q)^n}{(q; q^2)_{n}}\\
&=1+\sum_{n=1}^\infty (1+q^n)\frac{(q/a, q/b; q)_n(-ab)^n}{(a, b;q)_n} q^{n^2-2n}(q^n T_n(q)-T_{n-1}(q)).
\end{align*}
\end{thm}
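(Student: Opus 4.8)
The plan is to follow verbatim the strategy behind Theorem~\ref{mliuthmf}, changing only the single free parameter that distinguishes the two left-hand sides. Starting from Theorem~\ref{mliuthmb}, I would set $\beta=q$, $c=q^{1/2}$, $d=-q^{1/2}$, $\gamma=h$, $z=1$, and then let $\alpha\to1$. The choice $\gamma=h$ collapses both ${_4\phi_3}$'s to ${_3\phi_2}$'s; the choice $c=q^{1/2}$, $d=-q^{1/2}$ produces the product $(q^{1/2},-q^{1/2};q)_n=(q;q^2)_n$ in the denominator; and the choice $\beta=q$ cancels the $(q;q)_n$ coming from the definition of the ${_4\phi_3}$. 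A short check then shows that the left-hand ${_4\phi_3}$ becomes exactly $\sum_{n\ge0}(q/a,q/b;q)_n(ab/q)^n/(q;q^2)_n$, while the prefactor tends to $(q,ab/q;q)_\infty/(a,b;q)_\infty$, matching the left-hand side of Theorem~\ref{mliuthmg}. Note that this differs from the proof of Theorem~\ref{mliuthmf} only in that there one takes $\beta=-q$ (to generate the extra factor $(-q;q)_n^2/(q;q)_{2n}$), whereas here $\beta=q$.

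Next I would process the $\alpha\to1$ limit on the right-hand side, exactly as in the proof of Theorem~\ref{mliuthmf}. The $n=0$ summand contributes the leading $1$, while for $n\ge1$ one uses $(1-\alpha q^{2n})(\alpha;q)_n/[(1-\alpha)(q;q)_n]\to1+q^n$; combined with $(-\alpha ab/q)^nq^{n(n-1)/2}\to(-ab)^nq^{n(n-3)/2}$, this turns the right-hand side into
\[
1+\sum_{n=1}^\infty(1+q^n)\frac{(q/a,q/b;q)_n(-ab)^n}{(a,b;q)_n}q^{n(n-3)/2}\,{_3\phi_2}\left({{q^{-n},q^n,q}\atop{q^{1/2},-q^{1/2}}};q,q\right).
\]

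The crucial and genuinely new step is the evaluation of this terminating ${_3\phi_2}$, namely
\[
{_3\phi_2}\left({{q^{-n},q^n,q}\atop{q^{1/2},-q^{1/2}}};q,q\right)=q^{n(n-1)/2}(q^nT_n(q)-T_{n-1}(q)),
\]
which I would quote from \cite[Eq.~(5.3)]{Andrews2012}. This is the upper-entry-$q$ analogue of the ${_3\phi_2}$ with upper entry $-q$ used for Theorem~\ref{mliuthmf}, and, in contrast to that case, it carries no factor $(-1)^n$. I expect this to be the main obstacle, because Proposition~\ref{WWpp1} does not apply directly: matching the middle numerator parameter $q^n$ against $\alpha q^{n+1}$ forces $\alpha=q^{-1}$, and then $\alpha cd/q$ with $\alpha c=q^{1/2}$, $\alpha d=-q^{1/2}$ necessarily equals $-q$, not $q$; so one is steered into the $-q$ evaluation and must instead invoke Andrews' identity for the $q$ case. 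Once this evaluation is in hand, substituting it and combining the two powers of $q$ via $q^{n(n-3)/2}\cdot q^{n(n-1)/2}=q^{n^2-2n}$ reproduces the summand of Theorem~\ref{mliuthmg} term by term, completing the proof.
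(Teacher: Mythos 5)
Your proposal is correct and coincides with the paper's own (very terse) proof: the paper simply says to repeat the argument of Theorem~\ref{mliuthmf} using Andrews' evaluation, which is exactly your substitution $\beta=q$, $c=q^{1/2}$, $d=-q^{1/2}$, $\gamma=h$, $z=1$, $\alpha\to 1$ in Theorem~\ref{mliuthmb}. Your further observation that Proposition~\ref{WWpp1} only yields the $-q$ case and that the needed ${}_3\phi_2$ evaluation must instead be quoted from \cite[Eq.~(5.3)]{Andrews2012} is precisely the citation the paper relies on.
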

Setting $a=b=0,$ we are led to the Andrews identity \cite[Eq. (1.10)]{Andrews2012}
\begin{equation}
\sum_{n=0}^\infty \frac{q^{n^2}}{(q; q^2)_n}=\frac{1}{(q; q)_\infty}
\sum_{n=0}^\infty \sum_{j=0}^n (-1)^n (1-q^{6n+6}) q^{2n^2+n-j(j+1)/2}.
\label{ram:eqn10}
\end{equation}
Taking $a=1$ and $b=0$ in Theorem~\ref{mliuthmg},  we can obtain the identity
\begin{equation}
\sum_{n=0}^\infty \frac{(-1)^n (q; q)_n q^{n(n-1)/2}}{(q; q^2)_n}
=\sum_{n=0}^\infty \sum_{j=0}^n (1+q^{6n+2}) q^{(3n^2-n)/2-j(j+1)/2}.
\label{ram:eqn11}
\end{equation}
\subsection{} In this subsection, we will prove the following $q$-formula.
\begin{thm} \label{mliuthmh} For $|ab/q|<1, $ we have
\begin{align*}
&\frac{(q, ab; q)_\infty}{(qa, qb; q)_\infty}\sum_{n=0}^\infty \frac{(q/a, q/b; q)_n (ab/q)^n}{(q^2; q^2)_n}\\
&=\sum_{n=0}^\infty \sum_{j=-n}^n (-1)^{j} (1-q^{2n+1}) q^{j^2}
\frac{(q/a, q/b; q)_n (ab/q)^n}{(qa, qb; q)_n}.
\end{align*}
\end{thm}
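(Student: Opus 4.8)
The plan is to derive Theorem~\ref{mliuthmh} by the same mechanism used throughout this section: specialize the master transformation in Theorem~\ref{mliuthmb}, reduce the inner $_4\phi_3$ to a $_2\phi_1$, and then evaluate that $_2\phi_1$ as a finite theta sum using one of the limiting cases of Watson's theorem from Section~2. First I would compare the desired left-hand side, $\frac{(q,ab;q)_\infty}{(qa,qb;q)_\infty}\sum_{n\ge 0}\frac{(q/a,q/b;q)_n(ab/q)^n}{(q^2;q^2)_n}$, against the left-hand side of Theorem~\ref{mliuthmb}. Matching the prefactor $\frac{(\alpha q,\alpha ab/q;q)_\infty}{(\alpha a,\alpha b;q)_\infty}$ forces $\alpha=q$. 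Matching the summand $\frac{(q/a,q/b;q)_n(ab/q)^n}{(q^2;q^2)_n}$ against $\frac{(q/a,q/b,\beta,\gamma;q)_n(\alpha abz/q)^n}{(q,c,d,h;q)_n}$ then dictates the parameter choices: I take $\beta=\gamma=h=0$ so those Pochhammer factors disappear, set $c=-q$ (since $(q;q)_n(-q;q)_n=(q^2;q^2)_n$ as in the proof of Theorem~\ref{liunewthmf}), and choose $z$ so that $\alpha abz/q = ab/q$, i.e. $z=1/\alpha=1/q$. This is almost identical to the specialization that produced Theorem~\ref{liunewthmf}, except that the sign of $z$ (and hence the sign inside the inner series) differs.

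With these substitutions, the right-hand side of Theorem~\ref{mliuthmb} collapses to
\[
\sum_{n=0}^\infty \frac{(1-q^{2n+1})(q/a,q/b;q)_n(-ab)^n q^{n(n-1)/2}}{(qa,qb;q)_n}\,
{_2\phi_1}\!\left({{q^{-n},q^{n+1}}\atop{-q}};q,q\right),
\]
using $\alpha=q$ and $qz=q\cdot q^{-1}=1$ as the argument of the inner series. The next step is to evaluate this inner $_2\phi_1$ at argument $q$ (rather than $-q$, which appeared in \reff{ram:eqn1}). I expect the governing identity to be the $q=q$, $\alpha=1$, $c=-q$ case of Proposition~\ref{WWpp5}: setting $\alpha=1$ and $c=-q$ there gives
\[
{_2\phi_1}\!\left({{q^{-n},q^{n+1}}\atop{-q}};q,-1\right)
=(-1)^n q^{n(n+1)/2}\sum_{j=0}^n\Bigl(\tfrac{-1}{q}\Bigr)^{j},
\]
so I would instead look to Proposition~\ref{WWpp4} (the $d=0$, argument-$1$ case), whose right-hand side $\sum_{j=0}^n \frac{(1-\alpha q^{2j})(\alpha,q/c;q)_j}{(1-\alpha)(q,\alpha c;q)_j}q^{j^2-j}(\alpha c)^j$ produces a squared-theta tail. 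The clean target is ${_2\phi_1}\!\left({{q^{-n},q^{n+1}}\atop{-q}};q,q\right)=q^{n(n+1)/2}\sum_{j=-n}^n(-1)^j q^{j^2}$, which upon substitution converts $(-ab)^n q^{n(n-1)/2}\cdot q^{n(n+1)/2}=(ab/q)^n q^{2n}\cdots$ into exactly the factor $(ab/q)^n$ together with the weight $(-1)^j q^{j^2}$ and the symmetric double sum $\sum_{j=-n}^n$ of the claimed right-hand side.

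The main obstacle, and the step I would verify most carefully, is the precise evaluation of the inner $_2\phi_1$ at argument $q$ with the $+q^{j^2}$ weight: the analogous evaluation \reff{ram:eqn1} yields $q^{-j^2}$ with argument $-q$, whereas the present theorem requires $+q^{j^2}$, so the correct specialization of Section~2 must flip both the sign of the argument and the sign of the exponent of $q^{j^2}$. I would pin this down by taking the appropriate case ($\alpha=1$ together with a suitable value of $c$) in Proposition~\ref{WWpp4} or~\ref{WWpp5} and checking the resulting power of $q$ in the prefactor $(-\alpha)^n q^{n(n+1)/2}(q;q)_n/(q\alpha;q)_n$, which is what reconciles $(-ab)^n q^{n(n-1)/2}$ on the outer sum with the bare $(ab/q)^n$ in the statement. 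Once that evaluation is confirmed, substituting it into the specialized Theorem~\ref{mliuthmb} and collecting the $q$-powers (the outer $q^{n(n-1)/2}$ times the inner $q^{n(n+1)/2}$ giving $q^{n^2}$, absorbed to turn $(-ab)^n$ into $(ab/q)^n$ up to the theta weight) completes the proof essentially by bookkeeping.
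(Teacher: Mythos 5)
Your overall strategy is exactly the paper's: specialize Theorem~\ref{mliuthmb} with $\alpha=q$, $z=q^{-1}$, $\beta=\gamma=0$, one lower parameter equal to $-q$ and the remaining ones zero (so the denominator collapses to $(q;q)_n(-q;q)_n=(q^2;q^2)_n$; you should also say explicitly that $d=0$), and then evaluate the resulting inner $_2\phi_1$ by the $\alpha=1$, $c=-q$ case of Proposition~\ref{WWpp4}. You also correctly reject Proposition~\ref{WWpp5}, whose argument would be $c/q=-1$, in favour of the argument-$1$ case. So the route is the right one and matches the paper.

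However, the step you flagged as needing verification is carried out incorrectly in your write-up, and as written the bookkeeping does not close. First, the argument of the inner series is $qz=q\cdot q^{-1}=1$, not $q$; you state this and then write the series with argument $q$ in both displays. Second, the correct evaluation is
\begin{equation*}
{_2\phi_1}\left({{q^{-n},q^{n+1}}\atop{-q}};q,1\right)=(-1)^n q^{-n(n+1)/2}\sum_{j=-n}^n(-1)^jq^{j^2},
\end{equation*}
not $q^{n(n+1)/2}\sum_{j=-n}^n(-1)^jq^{j^2}$ as in your ``clean target'': in Proposition~\ref{WWpp4} the factor $(-1)^n\frac{(\alpha q;q)_n}{(q;q)_n}q^{n(n+1)/2}$ multiplies the $_2\phi_1$ on the left, so after dividing it across one gets $(-1)^nq^{-n(n+1)/2}$. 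This matters for the final reconciliation: with your version the outer factor becomes $(-ab)^nq^{n(n-1)/2}\cdot q^{n(n+1)/2}=(-ab)^nq^{n^2}$, which is not $(ab/q)^n$; with the correct evaluation one gets $(-ab)^nq^{n(n-1)/2}\cdot(-1)^nq^{-n(n+1)/2}=(ab)^nq^{-n}=(ab/q)^n$, exactly the weight in the statement. Once these two corrections are made, your argument coincides with the paper's proof.
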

\begin{proof} Setting $c=d=\beta=\gamma=0$ and $\alpha=q, c=-q, z=q^{-1}, h=-q$ in Theorem~\ref{mliuthmb}, we
find that
\begin{align*}
&\frac{(q, ab; q)_\infty}{(qa, qb; q)_\infty}\sum_{n=0}^\infty \frac{(q/a, q/b; q)_n (ab/q)^n}{(q^2; q^2)_n}\\
&=\sum_{n=0}^\infty (1-q^{2n+1})\frac{(q/a, q/b; q)_n (-ab)^n q^{n(n-1)/2}}{(qa, qb; q)_n}
{_2\phi_1}\({{q^{-n}, q^{n+1}}\atop{-q}}; q, 1\).
\end{align*}
Taking $\alpha=1$ and $c=-q$ in Proposition~\ref{WWpp4} and simplifying, we find that
\[
{_2\phi_1}\({{q^{-n}, q^{n+1}}\atop{-q}}; q, 1\)=(-1)^nq^{-n(n+1)/2}\sum_{j=-n}^n (-1)^j q^{j^2}.
\]
Combining the above two equations, we complete the proof of the theorem.
\end{proof}
Setting $(a, b)=(0, 0), (1,0)$ and $(-1, 0)$, respectively, in Theorem~\ref{mliuthmh}, we obtain
\begin{equation}
\sum_{n=0}^\infty \frac{q^{n^2}}{(q^2; q^2)_n}
=\frac{1}{(q; q)_\infty}\sum_{n=0}^\infty \sum_{j=-n}^n (-1)^{j} (1-q^{2n+1}) q^{n^2+j^2},
\label{ram;eqn12}
\end{equation}
\begin{equation}
\sum_{n=0}^\infty \frac{(-1)^n q^{n(n-1)/2}}{(-q; q)_n}
=\sum_{n=0}^\infty \sum_{j=-n}^n (-1)^{n+j} (1-q^{2n+1}) q^{j^2+n(n-1)/2},
\label{ram:eqn13}
\end{equation}
\begin{align}
 &\sum_{n=0}^\infty \frac{q^{n(n-1)/2}}{(q; q)_n}\label{ram;eqn14}\\
&=\frac{(-q; q)_\infty}{(q; q)_\infty}\sum_{n=0}^\infty \sum_{j=-n}^n (-1)^{j} (1-q^{2n+1}) q^{j^2+n(n-1)/2}.
\nonumber
\end{align}
\section{The Sears $_4\phi_3$ transformation and Hecke-type series identities}
In this section we will prove Theorem~\ref{newliuthmg} using Theorem~\ref{mliuthmb} and the Sears $_4\phi_3$ transformation.
\begin{proof}
Setting $\gamma=h$ and $z=1$ in Theorem~\ref{mliuthmb}, we conclude that
\begin{align*}
&\frac{(\alpha q, \alpha ab/q; q)_\infty}
{(\alpha a, \alpha b; q)_\infty} {_3\phi_2} \left({{q/a, q/b, \beta} \atop { c, d}} ;  q, \frac{\alpha ab}{q} \right) \\
&=\sum_{n=0}^\infty \frac{(1-\alpha q^{2n}) (\alpha, q/a, q/b; q)_n (-\alpha ab/q)^n q^{n(n-1)/2}}
{(1-\alpha)(q, \alpha a, \alpha b; q)_n}
{_3\phi_2} \left({{q^{-n}, \alpha q^n, \beta} \atop {c, d}} ;  q, q \right).
\end{align*}
Applying (\ref{R:eqn12}) to the right-hand side of the above equation,  we complete the proof of Theorem~\ref{newliuthmg}.
\end{proof}
\section{Acknowledgments}
I am  grateful to the referee for many very helpful comments and suggestions.

\end{document}